\newif\ifSubmission \Submissionfalse

\ifSubmission
  \documentclass[a4paper,american,reqno,12pt]{amsart}
  \usepackage[margin=1in]{geometry}
  \usepackage{setspace}
  
\else
  \documentclass[a4paper,american,reqno]{amsart}
\fi

\newif\ifPreprint \Preprinttrue

\usepackage{babel}
\usepackage[utf8]{inputenc}
\usepackage[T1]{fontenc}
\usepackage[binary-units=true]{siunitx}
\usepackage{booktabs}
\usepackage{enumitem}
\usepackage{siunitx}
\usepackage[ruled,vlined]{algorithm2e}
\usepackage{pgfplots}
\usepackage{todonotes}
\usepackage{graphicx}
\usepackage{float}
\usepackage[style = authoryear-comp,
            maxbibnames = 100,
            maxcitenames = 2,
            giveninits = true,
            uniquename = init,
            isbn = false,
            backend = bibtex]{biblatex}
\usepackage[colorlinks,
            citecolor=blue,
            urlcolor=blue,
            linkcolor=blue]{hyperref} 

\newcommand{\st}{\text{s.t.}}

\newcommand{\Defset}[3][\defsep]{\Set{#2#1#3}}
\newcommand{\set}[1]{\{#1\}}
\newcommand{\Set}[1]{\left\{#1\right\}}

\newcommand{\Abs}[1]{\left\lvert#1\right\rvert}
\newcommand{\define}{\mathrel{{\mathop:}{=}}}

\newtheorem{theorem}{Theorem}
\newtheorem{lemma}{Lemma}
\newtheorem{example}{Example}
\newtheorem{corollary}{Corollary}
\newtheorem{assumption}{Assumption}
\newtheorem{proposition}{Proposition}

\newtheorem{definition}{Definition}
\newtheorem{remark}{Remark}

\newcommand{\arcs}{A}
\newcommand{\edges}{E}
\newcommand{\inarcs}[1]{\delta^{\text{in}}(#1)}
\newcommand{\outarcs}[1]{\delta^{\text{out}}(#1)}
\newcommand{\nodes}{V}
\newcommand{\flow}{f}
\newcommand{\comflow}{x}
\newcommand{\origs}{S}
\newcommand{\dests}{T}
\newcommand{\commodities}{K}
\newcommand{\numcommodities}{|K|}
\newcommand{\latency}{L}

\newcommand{\uncertaintyset}{\ensuremath{\mathcal{U}}}
\newcommand{\bpr}{\text{BPR}}
\newcommand{\fix}{\text{fix}}
\newcommand{\capacity}{\ensuremath{u}}
\newcommand{\diff}{\xspace\,\mathrm{d}}

\definecolor{my-red}{HTML}{d7191c}
\definecolor{my-green}{HTML}{abdda4}
\def\cross{\textcolor{my-red}{\textsf{X}}}
\renewcommand{\check}{\textcolor{my-green!70!black}{\checkmark}}

\newcommand{\vv}[1]{\ifcat#1\relax\bm{#1}\else\mathbf{#1}\fi}

\newcommand{\field}{\mathbb}

\newcommand{\reals}{\field{R}}

\newcommand{\R}{\reals}

\bibliography{congestion-w-wardrop}

\begin{document}

\title[Modeling Network Congestion Using Wardrop Principles]{Modeling Network
  Congestion under Demand Uncertainty Using Wardrop Principles}

\author[Y. Beck]{Yasmine Beck}
\author[F. Giancola]{Francesca Giancola}
\author[I. Ljubi\'c]{Ivana Ljubi\'c}
\author[S. Mattia]{Sara Mattia}

\address[Y. Beck]{%
  Eindhoven University of Technology,
  Department of Industrial Engineering and Innovation Sciences,
  PO Box 513,
  5600 MB Eindhoven,
  the Netherlands}%
\email{y.beck@tue.nl}

\address[F. Giancola]{%
  (A) Istituto di Analisi dei Sistemi ed Informatica ``Antonio Ruberti'',
  Consiglio Nazionale delle Ricerche,
  Via dei Taurini, 19,
  00185 Rome,
  Italy;
  (B) Dipartimento di Ingegneria Informatica, Automatica e Gestionale,
  Sapienza Universit\`a di Roma,
  Via Ariosto, 25,
  00185 Rome,
  Italy}%
\email{francesca.giancola@iasi.cnr.it}

\address[I. Ljubi\'c]{%
  ESSEC Business School,
  Department of Information Systems, Data Analytics and Operations,
  3 Avenue Bernard Hirsch,
  95021 Cergy-Pontoise Cedex,
  France}%
\email{ljubic@essec.edu}

\address[S. Mattia]{%
  Istituto di Analisi dei Sistemi ed Informatica ``Antonio Ruberti'',
  Consiglio Nazionale delle Ricerche,
  Via dei Taurini, 19,
  00185 Rome,
  Italy}%
\email{sara.mattia@iasi.cnr.it}

\date{\today}


\begin{abstract}
  Motivated by the need for reliable traffic management under fluctuating travel
demand, we study the problem of determining the worst-case congestion in a
multi-commodity traffic network subject to demand uncertainty.
To this end, we stress-test a given network by identifying demand realizations
and corresponding travelers' route choices that maximize congestion.
The users of the traffic network are assumed to act according to one of the
two Wardrop principles---the user equilibrium or the system optimum---so that
the resulting congestion models can be seen as bilevel problems with a single
leader and multiple followers.
To address uncertain travel demand, we consider different models
such as ellipsoidal or budgeted uncertainty sets and the hose polyhedron.
We present single-level mixed-integer nonlinear reformulations of the
congestion models that exploit binary variables and big-$M$ constants, prove
the existence of optimal solutions, derive valid big-$M$s, and propose several
enhancement techniques to further strengthen the formulations. 
An extensive computational study on instances of the Sioux Falls network and
instances from the \textsf{SNDlib} demonstrates the computational
effectiveness of the proposed techniques and provides insight into the impact
of different congestion measures and uncertainty models on the resulting
worst-case congestion.


\end{abstract}

\keywords{Traffic networks, Wardrop principles,
  Bilevel optimization, Mixed-integer nonlinear optimization}
\subjclass[2020]{90B20,90C33,90C35,90C70}

\maketitle


\section{Introduction}
\label{sec:introduction}

Congestion is one of the central challenges in urban transportation,
leading to increased travel times, fuel consumption, and emissions.
To mitigate these effects, effective traffic management requires
anticipating how travelers respond to changing network conditions,
especially variations in travel demand arising from, e.g., daily
fluctuations, seasonal trends, or unforeseen events.
Because such variations are inevitable and often difficult to predict
accurately, understanding the potential worst-case congestion in a traffic
network is crucial.

In this paper, we study the problem of determining the worst-case congestion in
a multi-commodity traffic network subject to demand uncertainty. Effectively,
we stress-test a given network by identifying demand realizations and
corresponding travelers' route choices that maximize congestion.
To this end, we propose a novel bilevel formulation in which a traffic planner
acts as the leader and the users of the traffic network act as the followers.
In the leader's problem, we explicitly model the uncertain travel demand
against which the traffic planner wants to hedge. For this purpose, we
exploit ideas from robust optimization
\parencite{Soyster:1973,Ben-Tal_et_al:2009,Bertsimas_et_al:2011} so that demand
realizations are drawn from a predefined uncertainty set.
We use both ellipsoidal and polyhedral uncertainty sets,
including the hose polyhedron \parencite{hose,hose2} and the budgeted
uncertainty set \parencite{Bertsimas_Sim:2003,Bertsimas_Sim:2004,Sim:2004}.
To model travelers' route choices, we consider the two Wardrop
principles---the user equilibrium and the system optimum
\parencite{Wardrop_Whitehead:1952,Wardrop:1952}.
In a user equilibrium (UE), travelers select routes that minimize their
individual travel costs so that no traveler can improve their travel time by
unilaterally changing routes.
In contrast, under the system optimum (SO), a central planner coordinates or
assigns traffic such as to minimize the total travel time across the network.
Overall, we thus consider a single-leader multi-follower bilevel problem in
which Wardrop principles govern the followers' decisions.
An accessible overview of the two Wardrop principles can, e.g., be found in
\textcite{Correa_Stier-Moses:2011}. Moreover, we refer to
\textcite{Dempe:2002,Bard:1998,Dempe_et_al:2015},
and the surveys in
\textcite{Colson-et-al:2007,Colson_et_al:2005,Kleinert_et_al:2021}
for a general overview of bilevel optimization.
Let us further mention that bilevel problems are notoriously hard to
solve. \textcite{Hansen_et_al:1992} have shown that even linear bilevel
problems, i.e., those with continuous variables, linear objective functions,
and linear constraints, are strongly NP-hard in~general.

Building on the modeling choices described above and acknowledging the
computational challenges of bilevel optimization, the congestion models studied
in this paper are reformulated as mixed-integer nonlinear problems that
exploit binary variables and big-$M$ constants, which can be
tackled using state-of-the-art general-purpose solvers.
We prove the existence of optimal solutions to these problems, derive
valid big-$M$s, and propose several enhancement techniques that exploit the
network topology and structural properties of optimal flows to further
strengthen the formulations. 
To measure congestion, we consider three different functions: the maximum
utilization function, which focuses on the most congested arc; the total
utilization function, which accounts for congestion on all arcs of the network;
and the Bureau of Public Roads (BPR) function, which penalizes arcs with very
high congestion levels.
To compare the different variants of the congestion model and to assess the
effectiveness of the proposed enhancement techniques, we conduct an extensive
computational study on instances of the Sioux Falls network
\parencite{LeBlanc_et_al:1975} and instances from the \textsf{SNDlib}
\parencite{Orlowski_et_al:2010,SNDlib10}.
We analyze both user equilibrium and system optimum formulations in terms of
congestion levels and computational performance. Moreover, we compare
different problem variants arising from the choice of congestion measures and
uncertainty models, as well as from different travel cost functions, including
linear, quadratic, and BPR cost functions.
Our computational study reveals several key insights.
First, the proposed enhancement techniques significantly improve computational 
performance, enabling the solution of larger instances that would otherwise
remain unsolved.
Second, the choice of the travel cost function has a greater impact on the
computational tractability of the congestion models than the choice of the
congestion measure.
Third, the UE formulation can be solved considerably faster than 
the SO formulation, while congestion levels under the UE are typically
close to those under the SO.
Fourth and finally, we observe a trade-off between worst-case congestion and
computational cost across different uncertainty models, as congestion
estimates over larger uncertainty sets are typically obtained at increased
computational effort.

\subsection*{Related Literature}

Many applications of bilevel optimization arise in the context of
transportation; see, e.g., \textcite{Migdalas:1995} for a general overview.
In particular, bilevel models have been studied in
network design \parencite{LeBlanc_Boyce:1986,Marcotte:1986,%
  Ben-Ayed_et_al:1988,Fontaine_Minner:2014,Rey_Levin:2024},
network pricing \parencite{Labbe_et_al:1998,Brotcorne_et_al:2001,%
  Kalashnikov_et_al:2020,Dempe_Zemkoho:2012,Dewez_et_al:2008,Beck_et_al:2024},
or pricing with routing \parencite{Cerulli_et_al:2024}. 
Among these contributions, only a few works consider bilevel
formulations in which the lower level is modeled using Wardrop principles.
To the best of our knowledge,
\textcite{Beck_et_al:2024,Dempe_Zemkoho:2012,Rey_Levin:2024} are the closest
related studies in this context.
In \textcite{Rey_Levin:2024}, the authors study the discrete network design
problem in which travelers' act according to the Wardrop user equilibrium
principle.
To solve the resulting bilevel model, the authors present a
branch-and-price-and-cut approach.
In \textcite{Dempe_Zemkoho:2012}, the authors consider a bilevel toll-setting
problem in which travelers again follow the Wardrop user equilibrium
principle. In their work, the authors focus primarily on the theoretical
properties of the model, but a closely related toll-setting problem is studied
in \textcite{Beck_et_al:2024} from a computational point of view. In addition,
\textcite{Beck_et_al:2024} incorporate robust Wardrop equilibria to model
travelers' route choices subject to uncertain travel costs.
Overall, our work thus differs from the above contributions in two main
aspects.
First, while \textcite{Rey_Levin:2024} address network design and
\textcite{Dempe_Zemkoho:2012,Beck_et_al:2024} focus on network pricing, we study
the problem of determining the worst-case congestion that can arise in a given
traffic network.
Second, we explicitly account for uncertainty in the travel demand, whereas
\textcite{Beck_et_al:2024} consider uncertainty in the travel costs and
\textcite{Dempe_Zemkoho:2012,Rey_Levin:2024} study deterministic settings.
To the best of our knowledge, no existing work considers the problem of
stress-testing a traffic network under uncertain travel demand using a bilevel
formulation with Wardrop principles governing the travelers' behavior.
Although we focus on existing traffic networks, let us mention that the
insights provided by our models may also be useful in network design contexts,
for example to identify which parts of a network are most susceptible to
congestion under uncertain travel demand.

\subsection*{Outline}

The remainder of this paper is organized as follows.
In Section~\ref{sec:problem-statement}, we define the congestion models under
the user equilibrium and the system optimum.
In Section~\ref{sec:reformulations}, we present mixed-integer nonlinear
reformulations of these problems that exploit binary variables and big-$M$
constants and prove the existence of valid big-$M$s and optimal
solutions.
In Section~\ref{sec:tightening}, we propose several enhancement techniques to
strengthen the formulations of the congestion models.
In Section~\ref{sec:modeling-details}, we discuss further practical modeling
techniques and elaborate on our specific choice of congestion measures,
travel cost functions, and uncertainty models used in our computational study.
In Section~\ref{sec:computational-results}, we describe the experimental setup
and the design of the computational study. The results of this study are
discussed in Section~\ref{sec:results}.
Finally, we derive conclusions in Section~\ref{sec:conclusion}.


\section{Problem Statement}
\label{sec:problem-statement}

We study the problem of determining the worst-case congestion
in a traffic network under uncertain travel demand.
To this end, we first introduce the network model and discuss how to account
for congestion and demand uncertainty in Section~\ref{sec:upper-level-problem}.
Afterward, in Section~\ref{sec:lower-level-problem}, we elaborate on the
behavior of the network users, who are assumed to act according to one of the
two Wardrop principles---the user equilibrium or the system optimum.

\subsection{Congestion Model}
\label{sec:upper-level-problem}

To model the multi-commodity traffic network, we consider a directed
graph~$G = (\nodes,\arcs)$ with node set~$\nodes$ and arc set~$\arcs \subseteq
\nodes \times \nodes$.
Moreover, node subsets~$\origs \subseteq \nodes$ and~$\dests \subseteq \nodes$
denote the sets of origin and destination nodes, respectively.
The set of all commodities to be routed through the network is given
by~$\commodities \subseteq \origs \times \dests$ and each
commodity~$k \in \commodities$ has a fixed demand~$d_k \in \R_{\geq 0}$ to
travel from its origin to its destination.
For the ease of presentation, we consider a single commodity for each
origin-destination (OD) pair and all other pairs of nodes are assumed to have
zero demand.
For the remainder of this paper, we make the following connectivity assumption,
which is standard in the transportation literature; see, e.g., Assumption~2.A
in \textcite{Patriksson:2015}.

\begin{assumption}
  \label{as:graph}
  For every commodity~$k = (s_k,t_k) \in \commodities$, there exists at least
  one dipath that connects~$s_k$ and~$t_k$.
\end{assumption}

Let~$\flow = (\flow_a)_{a \in \arcs} \in \R^{\Abs{\arcs}}$ denote the vector of
all arc flows used to model traffic.
To assess the network under adverse conditions, we consider the worst-possible
realization of the travel demand and the corresponding travelers' route
choices. This leads us to considering the problem
\begin{equation}
  \label{eq:upper-level-problem}
  \max_{d,\flow} \quad \latency(\flow)
  \quad \st \quad d \in \uncertaintyset,\, \flow \in F(d).
\end{equation}
Here, we assume that the travel demand~$d = (d_k)_{k \in \commodities}$ is not
known exactly but that it takes values within a given uncertainty
set~$\uncertaintyset$. Moreover, we use~$F(d)$ to denote the set of traffic
flows corresponding to the optimal travelers' responses for a given demand
realization~$d$. For the uncertainty set, we impose the following
throughout the remainder of this paper.

\begin{assumption}
  \label{as:uncertainty-set}
  The set of origin-destination pairs~$\commodities$ is fixed
  across all realizations of the uncertain demand. Moreover,
  the uncertainty set~$\uncertaintyset$ is non-empty, convex, compact, and
  contained in the nonnegative orthant~$\R^{\Abs{\commodities}}_{\geq 0}$.
\end{assumption}

We assume that the uncertainty set~$\uncertaintyset$ is non-empty and compact
to ensure the existence of optimal solutions to the congestion models.
Moreover, we assume convexity to obtain formulations that are more
computationally tractable and amenable to state-of-the-art general-purpose
solvers.
In Assumption~\ref{as:uncertainty-set}, we further require
that~$\uncertaintyset$ is contained in the nonnegative orthant and that the set
of commodities is fixed, which guarantees the existence of optimal flows for
all demand realizations.
Note that, if the travel demand is perfectly known, i.e., if the uncertainty
set~$\uncertaintyset$ is a singleton, Problem~\eqref{eq:upper-level-problem}
reduces to a deterministic traffic assignment problem aimed at determining the
worst-case congestion. Throughout this paper, we refer to
this setting as the \textit{deterministic case} and use it as a baseline for
assessing the impact of demand uncertainty on network congestion.

For the latency function~$\latency(\flow)$ used to measure congestion, we
make the following assumption.

\begin{assumption}
  \label{as:latency-fun}
  The latency function
  $\latency : \R^{\Abs{\arcs}}_{\geq 0} \to \R_{\geq 0},\, \flow \mapsto
  \latency(\flow)$
  is continuous.
\end{assumption}

Next, we elaborate on the Wardrop principles used to model traffic flow.

\subsection{Wardrop Principles}
\label{sec:lower-level-problem}

For computational tractability, we adopt a fully aggregated node-arc
formulation in which all commodities with the same origin are treated
as a unique commodity with a single source and multiple destinations.
In the transportation literature, it is well known that such an aggregation
preserves the set of optimal flows, provided there are no commodity-dependent
costs; see, e.g., Section 5.2.3 in \textcite{Boyles_et_al:2025},
\textcite{Bienstock_et_al:1998}, and \textcite{Chouman_et_al:2017}
in which similar settings are studied.
In what follows, we model aggregated commodity flows using
variables~$\comflow = (\comflow^s)_{s \in \origs}$ with~$\comflow^s =
(\comflow^s_a)_{a \in \arcs} \in \R^{\Abs{\arcs}}_{\geq0}$ for all~$s \in \origs$.
The overall arc flows are then given by
\begin{equation}
  \label{eq:comflow-to-flow}
  \flow = \sum_{s \in \origs} \comflow^s.
\end{equation}
Moreover, for every source~$s \in \origs$, we define the set of commodities
that originate in~$s$~as
\begin{equation*}
  \commodities_s \define \Defset{k = (s_k,t_k) \in \commodities}{s_k = s}.
\end{equation*}
In what follows, we use~$\inarcs{v}$ and~$\outarcs{v}$ to denote the
sets of in- and outgoing arcs of node~\mbox{$v \in \nodes$}, respectively.
Flow conservation can then be stated as
\begin{equation}
  \label{eq:flow-conservation}
  \sum_{a \in \outarcs{v}} \comflow^s_a - \sum_{a \in \inarcs{v}} \comflow^s_a
  = d^s_v, 
  \quad v \in \nodes,\, s \in \origs,
\end{equation}
with
\begin{equation*}
  d^s_v =
  \begin{cases}
    + \sum_{k \in K_s} d_k, & v = s,
    \\
    - d_k,
                            & k = (s,v) \in \commodities_s,
    \\
    0, & \text{otherwise},
  \end{cases}
\end{equation*}
for a given demand realization~$d \in \uncertaintyset$.
With the constraints in~\eqref{eq:comflow-to-flow} and
\eqref{eq:flow-conservation} at hand, which characterize feasible flows in the
traffic network, we now extend the model to incorporate travel costs.
These costs determine how users choose their routes and, consequently, how
flows distribute across the network.
For each arc~$a \in \arcs$, we consider a travel cost function
\begin{equation*}
c_a : \R^{\Abs{\arcs}}_{\geq0} \to \R_{>0}, \quad \flow \mapsto c_a(\flow_a),
\end{equation*}
which captures the time required to traverse the arc as well as additional
expenses such as fuel consumption or congestion-induced delays. In line with
standard terminology, we use the terms travel time and travel cost
interchangeably. Moreover, the following will be a standing assumption for the
remainder of this paper.

\begin{assumption}
  \label{as:cont-cost}
  For every arc~$a \in \arcs$, the function
  $c_a : \R^{\Abs{\arcs}}_{\geq 0} \to \R_{>0},\, \flow \mapsto c_a(\flow_a)$
  is convex, non-decreasing, and continuously differentiable in~$\flow$.
\end{assumption} 

In particular, we assume that travel costs are separable, i.e., the cost of
traversing an arc~$a \in \arcs$ only depends on the flow on that arc but not on
the flows on other arcs of the network.
Assumption~\ref{as:cont-cost} is satisfied by many travel cost functions
studied in the literature such as, e.g., the BPR function; see also Section~1.5
in \textcite{Patriksson:2015} for further discussions.
Let us now formalize the behavior of users in the traffic network.
To this end, we follow the Wardrop principles
\parencite{Wardrop_Whitehead:1952,Wardrop:1952} and distinguish between the
user equilibrium and the system optimum.
For both settings, we assume that travelers have complete knowledge of
available paths and that traffic flows remain stable over time.

\subsubsection*{User Equilibrium}

In the user equilibrium, travelers act selfishly by selecting their routes such
as to minimize their own travel times. This behavior leads to a stable state in
which no commodity can reduce their travel costs by unilaterally changing
routes. Consequently, in a Wardrop user equilibrium, all paths used by a given
commodity have equal travel costs. For separable travel cost functions, the UE
can be obtained as an optimal solution to the problem
\begin{equation}
  \label{eq:user-equi}
  \min_{\flow,\comflow \geq 0} \quad
  \sum_{a \in \arcs} \int_{0}^{\flow_a} c_a(\xi) \diff \xi
  \quad \st \quad
  \eqref{eq:comflow-to-flow},\, \eqref{eq:flow-conservation}.
\end{equation}
Hence, under the user equilibrium, the set~$F(d)$ corresponds to the set of
optimal solutions to Problem~\eqref{eq:user-equi}.
A similar problem is, e.g., considered in Section~3.6.2 in
\textcite{Ferris_Pang:1997}.

\subsubsection*{System Optimum}

The system optimum seeks to minimize the total travel time across
the network by coordinating the flows of all travelers. This corresponds to a
situation in which the marginal travel times are the same on all routes used by
a given commodity. Otherwise, shifting flow from a higher- to a lower-cost
route would reduce the overall travel time. The SO can be obtained as an
optimal solution to the problem
\begin{equation}
  \label{eq:system-opt}
  \min_{\flow,\comflow \geq 0} \quad
  \sum_{a \in \arcs} c_a(\flow_a)\flow_a
  \quad \st \quad
  \eqref{eq:comflow-to-flow},\, \eqref{eq:flow-conservation}.
\end{equation}
Hence, under the system optimum, the set~$F(d)$ corresponds to the set of
optimal solutions to Problem~\eqref{eq:system-opt}.
For further discussion of the SO, see also Section~2.4 in
\textcite{Patriksson:2015}.

To sum up, for each demand realization~$d \in \mathcal{U}$, both
the UE and the SO can be obtained by solving appropriately
chosen nonlinear optimization problems (NLPs). The two models share the same
linear feasibility constraints, i.e., the only nonlinearities arise in the
respective objective functions. As a result, the Abadie constraint
qualification is satisfied for all feasible points. Moreover, under
Assumption~\ref{as:cont-cost}, Problems~\eqref{eq:user-equi}
and~\eqref{eq:system-opt} are convex NLPs, so that the
Karush--Kuhn--Tucker (KKT) conditions are both necessary and sufficient
optimality conditions.
Finally, let us mention that the main difference between the UE and SO models
lies in their behavioral assumptions. The UE reflects decentralized, selfish
route choices by individual travelers, whereas the SO represents a centralized
assignment of traffic flows that minimizes total travel time.

\subsection{Congestion Ratio}
\label{sec:congestion-ratio}

To compare the congestion levels under the user equilibrium and the system
optimum, we introduce the so-called congestion ratio (CR), which we define as
\begin{equation*}
  \text{CR} =
  \frac{\latency(\flow^{\text{UE}})}{\latency(\flow^{\text{SO}})}.
\end{equation*}
Here, $f^{\text{UE}}$ and~$f^{\text{SO}}$ denote optimal arc flows in the
UE and the SO formulation, respectively.
A congestion ratio of~$1$ indicates that the user equilibrium solution achieves
the same overall congestion as the system optimum. Values greater than~$1$
indicate higher congestion under the UE, whereas values lower than~$1$ indicate
higher congestion under the SO.

Note that the congestion ratio differs from the price of anarchy
\parencite{PoA}, a widely used metric in the transportation literature for
comparing UE and SO solutions. The price of anarchy measures the efficiency
loss caused by selfish routing by comparing aggregated travel costs under a
given demand realization and is, by definition, always at least~$1$.
In contrast, the CR focuses on congestion rather than total travel cost and may
compare UE and SO flows corresponding to different demand realizations in our
framework. As a result, the CR captures worst-case congestion levels and,
unlike the price of anarchy, can take values below~$1$.
We illustrate such a behavior with the following example.

\begin{example}[CR $< 1$]
\label{ex:cr-less-than-one}
We consider the network depicted in Figure~\ref{fig:cr-example-network} with a
single commodity~$k = (2,1)$ whose demand is uncertain and takes values in the
interval~$[30,50]$.
\begin{figure}
\centering
\begin{tikzpicture}[
  transform shape, thick,
  node distance=2.5cm,
  every node/.style={circle, draw, minimum size=0.8cm, font=\small},
  every edge/.style={draw, ->, >=stealth, thick}
  ]
  \node (2) at (0, 0) {$2$};
  \node (1) at (4, 0) {$1$};
  \node (3) at (2, -1.5) {$3$};
  
  \draw[->, >=stealth, thick] (2) -- node[above, draw=none, rectangle,
  font=\footnotesize] {$a_1$} (1); 
  \draw[->, >=stealth, thick] (2) -- node[left, draw=none, rectangle,
  font=\footnotesize] {$a_2$} (3);
  \draw[->, >=stealth, thick] (3) -- node[right, draw=none, rectangle,
  font=\footnotesize] {$a_3$} (1);
    
\end{tikzpicture}
\caption{The traffic network considered in Example~\ref{ex:cr-less-than-one}.}
\label{fig:cr-example-network}
\end{figure}
For all arcs~$a \in \arcs = \set{a_1,a_2,a_3}$, we use the travel cost
function $c_a(\flow_a) = 1 + 0.15 (\flow_a/20)^4$ and measure congestion
as~$\latency(\flow) = \sum_{a \in \arcs} \flow_a/20$, which captures the overall
traffic load.
In this network, there are two paths from the origin node~$2$ to the
destination node~$1$, namely~$2 \to 1$ and~$2 \to 3 \to 1$.

Under the UE formulation, the worst-case demand is~$d^{\text{UE}} = 50$ with
optimal flows~$\flow_{a_1}^{\text{UE}} = 33.2$
and~$\flow_{a_2}^{UE} = \flow_{a_3}^{UE} = 16.8$, which results in a
congestion value of~$\latency(\flow^{\text{UE}}) = 33.2$.
Under the SO formulation, the worst-case demand is still $d^{\text{SO}} =
50$ but the optimal flows are~$\flow_{a_1}^{SO} = 28.4$
and~$\flow_{a_2}^{SO} = \flow_{a_3}^{SO} = 21.6$, which
yields~$\latency(\flow^{\text{SO}}) = 3.58$.
Hence, the congestion ratio is
\begin{equation*}
  \text{CR} = \frac{\latency(\flow^{\text{UE}})}{\latency(\flow^{\text{SO}})} =
  \frac{3.34}{3.58} = 0.93 < 1,
\end{equation*}
which shows that the UE solution can achieve lower worst-case congestion than
the SO solution, a behavior that cannot occur with the price of anarchy.
Note that, in this example, both formulations yield the same
worst-case demand realization, but this does not have to be the case in
general.
\end{example}


\section{Single-Level Reformulations}
\label{sec:reformulations}

Because of its nested structure, the congestion
model~\eqref{eq:upper-level-problem} is intrinsically hard to solve.
Motivated by common approaches in bilevel optimization, we thus
reformulate it as a single-level problem, which can then be
tackled using state-of-the-art general-purpose solvers.
Under Assumption~\ref{as:cont-cost}, the lower-level
problems~\eqref{eq:user-equi} and~\eqref{eq:system-opt} are
convex~$d$-parameterized NLPs, which allows to use their KKT conditions to
obtain equivalent single-level reformulations.
We present the single-level reformulations of the congestion models under the
user equilibrium and the system optimum in
Sections~\ref{sec:single-level-ref:ue} and~\ref{sec:single-level-ref:so},
respectively.
As the resulting problems are mathematical problems with equilibrium
constraints (MPECs)---see, e.g., \textcite{Luo_et_al:1996} for a general
overview---we further reformulate them as mixed-integer nonlinear problems
(MINLPs), following the same ideas as in
\textcite{Fortuny-Amat_Mccarl:1981}. To this end, we introduce auxiliary binary
variables and sufficiently large big-$M$ constants, whose existence we derive
in Section~\ref{sec:big-m}. Finally, we prove the
existence of optimal solutions to the congestion models in
Section~\ref{sec:existence-of-solutions}.

\subsection{Congestion Model  under the User Equilibrium}
\label{sec:single-level-ref:ue}

For a given demand realization~$d \in \uncertaintyset$, the KKT conditions of
Problem~\eqref{eq:user-equi} can be stated as
\begin{subequations}
  \label{eq:user-equi:kkt}
  \begin{align}
    & \flow = \sum_{s \in S} \comflow^s,
      \label{eq:user-equi:comflow-to-flow}
    \\
    & \sum_{a \in \outarcs{v}} \comflow^s_a - \sum_{a \in \inarcs{v}} \comflow^s_a
      = d^s_v, & v \in \nodes,\, s \in \origs,
                 \label{eq:user-equi:flow-cons}
    \\
    & 0 \leq c_a(\flow_a) + \tau_i^s - \tau_j^s \perp \comflow_a^s \geq 0,
    & a = (i,j) \in \arcs,\, s \in \origs.
      \label{eq:user-equi:kkt-complementarity}
  \end{align}
\end{subequations}
Here, the variables~$\tau = (\tau^s)_{s \in \origs}$ with~$\tau^s =
(\tau^s_v)_{v \in \nodes} \in \R^{\Abs{\nodes}}$ correspond to the Lagrangian
multipliers associated with the flow conservation constraints
in~\eqref{eq:flow-conservation}.
Because the KKT conditions are necessary and sufficient
for Problem~\eqref{eq:user-equi}, we can replace Problem~\eqref{eq:user-equi}
with System~\eqref{eq:user-equi:kkt} to obtain the single-level reformulation
of the congestion model~\eqref{eq:upper-level-problem} given by
\begin{equation}
  \label{eq:single-level-ref:ue}
  \max_{d,\flow,\comflow,\tau} \quad \latency(\flow)
  \quad \st \quad
  d \in \uncertaintyset,\, \eqref{eq:user-equi:kkt}.
\end{equation}
Problem \eqref{eq:single-level-ref:ue} is an MPEC because of the complementary
constraints in~\eqref{eq:user-equi:kkt-complementarity}.
Nevertheless, we can exploit the disjunctive nature of these constraints to
derive an MINLP reformulation of the overall congestion model.
To this end, we introduce auxiliary binary variables~$y_a^s \in \set{0,1}$ and
sufficiently large constants~$M_a^s, N_a^s \in \R_{\geq 0}$ for all arcs~$a \in
\arcs$ and all origin nodes~$s \in \origs$ so that we can equivalently re-write
the constraints in~\eqref{eq:user-equi:kkt-complementarity} as
\begin{subequations}
  \label{eq:user-equi:big-m-ref}
  \begin{align}
    0 & \leq c_a(\flow_a) + \tau_i^s - \tau_j^s \leq M_a^s (1 - y_a^s),
    & a = (i,j) \in \arcs,\, s \in \origs,
      \label{eq:user-equi:big-m-ref:cost}
    \\
    0 & \leq \comflow_a^s \leq N_a^s y_a^s,
    & a \in \arcs,\, s \in \origs.
      \label{eq:user-equi:big-m-ref:flow}
  \end{align}
\end{subequations}
For~$a \in \arcs$ and~$s \in \origs$, the binary variable~$y_a^s$
indicates whether any flow originating in~$s$ uses arc~$a$.
The MINLP reformulation of Problem~\eqref{eq:upper-level-problem} then reads
\begin{equation}
  \label{eq:minlp-ref:ue}
  \max_{d,\flow,\comflow,\tau,y} \quad \latency(\flow)
  \quad \st \quad
  d \in \uncertaintyset,\, \text{\eqref{eq:user-equi:comflow-to-flow},
    \eqref{eq:user-equi:flow-cons}, \eqref{eq:user-equi:big-m-ref}},\,
  y^s_a \in \Set{0,1},\, a \in \arcs,\, s \in \origs.
\end{equation}
In general, Problem~\eqref{eq:minlp-ref:ue} is a nonconvex MINLP because
the constraints in~\eqref{eq:user-equi:big-m-ref:cost} are nonlinear and
nonconvex for general convex travel cost functions; cf.\
Assumption~\ref{as:cont-cost}. 
Nevertheless, we emphasize that the structural properties of
Problem~\eqref{eq:minlp-ref:ue} depend strongly on the specific choice of
travel cost and latency functions.
For instance, if both are linear, Problem~\eqref{eq:minlp-ref:ue} is a convex
mixed-integer linear problem (MILP).
We elaborate on problem variants in Section~\ref{sec:modeling-details},
where we study different choices of travel cost and latency functions in more
detail.

\subsection{Congestion Model under the System Optimum}
\label{sec:single-level-ref:so}

For a given demand realization~$d \in \uncertaintyset$, the KKT conditions of
Problem~\eqref{eq:system-opt} can be stated as
\begin{subequations}
  \label{eq:system-opt:kkt}
  \begin{align}
    & \flow = \sum_{s \in S} \comflow^s,
      \label{eq:system-opt:comflow-to-flow}
    \\
    & \sum_{a \in \outarcs{v}} \comflow^s_a - \sum_{a \in \inarcs{v}} \comflow^s_a
      = d^s_v, & v \in \nodes,\, s \in \origs,
                 \label{eq:system-opt:flow-cons}
    \\
    & 0 \leq c_a(\flow_a) + \flow_ac_a'(\flow_a) + \tau_i^s - \tau_j^s \perp
      \comflow_a^s \geq 0,
    & a = (i,j) \in \arcs,\, s \in \origs.
      \label{eq:system-opt:kkt-complementarity}
  \end{align}
\end{subequations}
Here, $c_a'(\flow_a)$ denotes the derivative of the travel cost
function~$c_a(\flow_a)$ for~\mbox{$a \in \arcs$}.
Because the UE formulation~\eqref{eq:user-equi} and the SO
model~\eqref{eq:system-opt} only differ in their objective functions,
the corresponding KKT conditions only differ
in~\eqref{eq:user-equi:kkt-complementarity}
and~\eqref{eq:system-opt:kkt-complementarity}.
As before, we obtain a single-level reformulation of the congestion
model~\eqref{eq:upper-level-problem} by replacing Problem~\eqref{eq:system-opt}
with its necessary and sufficient KKT conditions. This yields
\begin{equation}
  \label{eq:single-level-ref:so}
  \max_{d,\flow,\comflow,\tau} \quad \latency(\flow)
  \quad \st \quad
  d \in \uncertaintyset,\, \eqref{eq:system-opt:kkt}.
\end{equation}
Again, we introduce auxiliary binary variables~$y_a^s \in \set{0,1}$ and
sufficiently large constants~$M_a^s, N_a^s \in \R_{\geq 0}$ for all~$a \in
\arcs$ and~$s \in \origs$ to obtain an equivalent MINLP reformulation
of~\eqref{eq:single-level-ref:so}, which is given by
\begin{subequations}
  \label{eq:minlp-ref:so}
  \begin{align}
    \max_{d,\flow,\comflow,\tau,y} \quad
    & \latency(\flow)
    \\
    \st \quad \
    & d \in \uncertaintyset,\, \text{\eqref{eq:system-opt:comflow-to-flow},
      \eqref{eq:system-opt:flow-cons}},
    \\
    & 0 \leq c_a(\flow_a) + \flow_ac_a'(\flow_a) + \tau_i^s - \tau_j^s
      \leq M^s_a (1 - y^s_a),
    & a \in \arcs,\, s \in \origs,
      \label{eq:minlp-ref:so:big-m-cost}
    \\
    & 0 \leq \comflow_a^s \leq N_a^s y_a^s,
    & a \in \arcs,\, s \in \origs,
    \\
    & y_a^s \in \Set{0,1},
    & a \in \arcs,\, s \in \origs.
  \end{align}
\end{subequations}

\subsection{Variable Bounds and Big-$M$s}
\label{sec:big-m}

Note that the equivalence of Problems~\eqref{eq:minlp-ref:ue}
and~\eqref{eq:minlp-ref:so} to the original congestion models under the UE and
the SO relies on choosing appropriate values for
the constants~$M_a^s, N_a^s$ for all~$a \in \arcs$ and~$s \in \origs$.
In this section, we show that such sufficiently large constants exist by
proving valid bounds for the variables~$\flow$, $\comflow$, and~$\tau$.

\begin{proposition}
  \label{prop:existence-flow-bounds}
  Let~$d \in \uncertaintyset$ be given arbitrarily.
  Then, under Assumptions~\ref{as:graph}, \ref{as:uncertainty-set},
  and~\ref{as:cont-cost}, Problems~\eqref{eq:user-equi}
  and~\eqref{eq:system-opt} each admit an optimal solution~$(\flow, \comflow)$
  that satisfies
  \begin{equation*}
    0 \leq \comflow_a^s \leq \sum_{k \in \commodities_s} d_k,
    \quad a \in \arcs,\, s \in \origs,
  \end{equation*}
  as well as
  \begin{equation*}
    0 \leq \flow_a \leq \sum_{s \in \origs} \sum_{k \in \commodities_s} d_k,
    \quad a \in \arcs.
  \end{equation*}
\end{proposition}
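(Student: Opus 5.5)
Our plan has two parts. First we show that an optimal solution exists, and then we modify it, without increasing the objective, into one that is acyclic in each source's flow; acyclicity then gives the bounds.

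\emph{Existence.} Fix $d \in \uncertaintyset$. By Assumption~\ref{as:uncertainty-set}, $d \geq 0$. By Assumption~\ref{as:graph}, each commodity $k \in \commodities_s$ can be routed along a dipath from $s$ to $t_k$, so the feasible set of~\eqref{eq:user-equi} and~\eqref{eq:system-opt} is a nonempty polyhedron. This polyhedron may be unbounded, since flow can circulate along directed cycles. We therefore work on the subset of flows without such circulations, which is compact: every feasible $\comflow^s$ decomposes into path flows plus cycle flows. Removing the cycle flows keeps the solution feasible and does not increase either objective. Indeed, $c_a > 0$ and is non-decreasing, so both $\int_0^{\flow_a} c_a(\xi)\diff\xi$ and $c_a(\flow_a)\flow_a$ are non-decreasing in $\flow_a \geq 0$, and cancelling a cycle only decreases arc flows. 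Every feasible solution is thus dominated by a cycle-free one. The objective is continuous by Assumption~\ref{as:cont-cost}, so by Weierstrass it attains its minimum over the compact set defined next, and that minimizer is optimal for the full problem.

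\emph{Compactness.} This step needs the most care. We restrict to the set of feasible $(\flow,\comflow)$ satisfying the claimed bounds $0 \le \comflow^s_a \le D_s := \sum_{k\in\commodities_s} d_k$. This set is compact, and it is nonempty because it contains the dipath routing. Next we show that any feasible point can be mapped into this set without increasing the objective. Take any feasible $\comflow$ and, for each $s$, apply the standard flow decomposition theorem to $\comflow^s$. This gives a set of $s$–$t_k$ path flows, carrying total $D_s$ since the only supply node is $s$, together with cycle flows. Dropping the cycles yields $\tilde\comflow^s \le \comflow^s$ componentwise, which still satisfies~\eqref{eq:flow-conservation}. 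Each arc lies on a given simple path at most once, so $\tilde\comflow^s_a \le \sum_{\text{paths}} (\text{path flow}) = D_s$. With $\tilde\flow = \sum_s \tilde\comflow^s \le \flow$, monotonicity of the objectives gives objective value at most the original. Consequently, the infimum over the full feasible set equals the minimum over the compact subset, and that minimizer is optimal for the original problem.

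\emph{Bounds.} The bound on $\flow_a$ follows by summing over $s \in \origs$ using~\eqref{eq:comflow-to-flow}, and nonnegativity holds by definition. The main care points are justifying the decomposition for the aggregated single-source, multi-sink commodity, which is standard because supplies occur only at $s$ and demands only at the $t_k$, and verifying monotonicity of the SO objective $c_a(\flow_a)\flow_a$. The latter holds because it is a product of nonnegative non-decreasing functions, using $c_a > 0$.
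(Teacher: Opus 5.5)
Your proof is correct and follows essentially the same route as the paper. Both arguments cancel cycle flows in each $\comflow^s$ without increasing the monotone UE/SO objective, impose the bounds $0 \leq \comflow^s_a \leq \sum_{k \in \commodities_s} d_k$ without loss of generality to obtain a compact feasible set, and then apply the Weierstrass theorem. Your version is somewhat more careful than the paper's sketch: it needs only non-strict monotonicity (verified explicitly for $c_a(\flow_a)\flow_a$), and it spells out why the simple path flows in the decomposition give the per-arc bound.
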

\begin{proof}
  Because the travel cost functions are positive and non-decreasing under
  Assumption~\ref{as:cont-cost}, any positive flow on a cycle would increase
  the objective function value and therefore cannot occur in an optimal
  solution; cf., e.g., Theorem~3.8 in \textcite{Ahuja_et_al:1993}.
  Hence, we can, w.l.o.g., add the
  constraints~$0 \leq \comflow_a^s \leq \sum_{k \in \commodities_s} d_k$ for
  all~$a \in \arcs$ and~$s \in \origs$ to Problems~\eqref{eq:user-equi}
  and~\eqref{eq:system-opt} without affecting their sets of optimal solutions.
  As the variables~$\comflow$ are linearly coupled to the arc flows~$\flow$
  via~\eqref{eq:comflow-to-flow}, this directly implies finite bounds
  for~$\flow$, i.e., all flow variables are bounded.
  The feasibility of Problems~\eqref{eq:user-equi}
  and~\eqref{eq:system-opt} follows from Assumptions~\ref{as:graph}
  and~\ref{as:uncertainty-set}.
  Moreover, as the feasible sets are described by finitely many linear
  constraints, they are compact.
  Finally, because the objective functions of the UE and SO problems are
  continuous, the Weierstrass theorem yields the existence of optimal solutions
  to Problems~\eqref{eq:user-equi} and~\eqref{eq:system-opt}; see, e.g.,
  Theorem~2.4 in \textcite{Patriksson:2015} for similar arguments.
\end{proof}

\begin{proposition}
  \label{prop:bound-tau}
  Let~$d \in \uncertaintyset$ be given arbitrarily and suppose
  that Assumptions~\ref{as:graph}, \ref{as:uncertainty-set},
  and~\ref{as:cont-cost} hold. Then, for
  every~$(\flow, \comflow) \in F(d)$, there exists~$\tau$ such
  that~$0 \leq \tau_v^s < \infty$ holds for all~$v \in \nodes$
  and~$s \in \origs$, and~$(\flow,\comflow,\tau)$
  solves~\eqref{eq:user-equi:kkt} or~\eqref{eq:system-opt:kkt} under the user
  equilibrium or the system optimum, respectively.
\end{proposition}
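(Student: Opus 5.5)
Since $(\flow,\comflow) \in F(d)$ is an optimal solution of the convex problem~\eqref{eq:user-equi} (resp.~\eqref{eq:system-opt}) whose constraints are linear, the KKT conditions are necessary. There therefore exists \emph{some} multiplier vector $\hat\tau$ such that $(\flow,\comflow,\hat\tau)$ satisfies~\eqref{eq:user-equi:kkt} (resp.~\eqref{eq:system-opt:kkt}). Every such $\hat\tau$ is a finite real vector, so the only real work is to modify $\hat\tau$ so that it becomes nonnegative without destroying the KKT system.

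The key observation is a shift invariance. For each fixed source $s \in \origs$, the multipliers $\tau^s$ enter~\eqref{eq:user-equi:kkt-complementarity} and~\eqref{eq:system-opt:kkt-complementarity} only through differences $\tau^s_i - \tau^s_j$. They do not appear in the flow conservation constraints or in~\eqref{eq:comflow-to-flow}. Hence, for any constant $\gamma^s \in \R$, the vector $\tau^s_v \define \hat\tau^s_v + \gamma^s$, $v \in \nodes$, yields exactly the same values of $c_a(\flow_a) + \tau^s_i - \tau^s_j$ (resp.\ $c_a(\flow_a) + \flow_a c_a'(\flow_a) + \tau^s_i - \tau^s_j$) on every arc. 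Consequently, both the sign conditions and the complementarity conditions remain satisfied.

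I would choose $\gamma^s \define -\min_{v \in \nodes} \hat\tau^s_v$, which is well defined because $\nodes$ is finite. This gives $\tau^s_v = \hat\tau^s_v - \min_{w} \hat\tau^s_w \geq 0$ for all $v$, with equality at a minimizing node, and $\tau^s_v < \infty$ because each $\hat\tau^s_v$ is a real number. Doing this independently for every $s \in \origs$ produces the desired $\tau$.

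The main point to be careful about is the justification of necessity of KKT. Here I would rely on the remark in Section~\ref{sec:lower-level-problem} that the feasible set is polyhedral, so Abadie's constraint qualification holds and multipliers exist for every optimal point, not just for the one constructed in Proposition~\ref{prop:existence-flow-bounds}. Assumptions~\ref{as:graph} and~\ref{as:uncertainty-set} are only needed to guarantee that $F(d)$ is well defined and nonempty, which is where Proposition~\ref{prop:existence-flow-bounds} is invoked. Note that the nonnegativity of $\tau$ and its finiteness are the only claims; deriving an explicit uniform bound, for instance via shortest-path distances with respect to the costs $c_a(\flow_a)$, would be a separate step that is not needed here.
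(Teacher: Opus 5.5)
Your argument is correct for the proposition as written. A real multiplier vector is automatically finite, so the content is existence of multipliers at \emph{every} optimal point (guaranteed by the polyhedral feasible set) plus nonnegativity, and your per-source shift $\tau^s_v = \hat\tau^s_v - \min_{w \in \nodes}\hat\tau^s_w$ delivers exactly that.

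The paper gives no self-contained proof. It defers to Remark~1 and the proof of Proposition~3 in Beck et al.~(2024), and then uses the result in Section~\ref{sec:big-m} to obtain $M^s_a \geq c_a(\flow_a) + \tau^s_i - \tau^s_j$. That use needs an a priori upper bound on $\tau$, independent of $d \in \uncertaintyset$ and of which multiplier is picked.

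Your route is shorter and relies only on shift invariance, but it does not supply such a bound. This is not a gap for the statement itself. At a node carrying no $s$-flow, $\hat\tau^s_v$ enters only through inequalities. For example, a node reachable from $s$ with no outgoing arcs and zero demand may receive an arbitrarily negative $\hat\tau^s_v$, and then $\hat\tau^s - \min_w \hat\tau^s_w$ is arbitrarily large at all other nodes. If $G$ is strongly connected your shift is in fact bounded, since summing the sign conditions along a $u$--$w$ dipath gives $\hat\tau^s_w - \hat\tau^s_u \leq \mathrm{dist}(u,w)$; but Assumption~\ref{as:graph} does not ensure this.

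The bound is obtained by choosing $\tau$ explicitly. Take $\bar c_a = c_a(\flow_a)$ for the UE or $\bar c_a = c_a(\flow_a) + \flow_a c_a'(\flow_a)$ for the SO; both are positive. Set $\tau^s_v = \mathrm{dist}_{\bar c}(s,v)$ for $v \in \nodes_s$, and $\tau^s_v = \max_{w \in \nodes_s}\mathrm{dist}_{\bar c}(s,w)$ otherwise.
\begin{itemize}
\item The sign conditions follow from the triangle inequality. Arcs leaving $\nodes \setminus \nodes_s$ carry no $s$-flow and have $\tau^s_i \geq \tau^s_j$, so they are trivially fine.
\item Complementarity follows by comparison with any KKT multiplier $\hat\tau$. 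Positive-flow arcs contain no cycle when $\bar c > 0$, so you can trace them back to $s$. This shows $\hat\tau^s_v - \hat\tau^s_s = \mathrm{dist}_{\bar c}(s,v)$ at every node carrying $s$-flow.
\end{itemize}
This yields $0 \leq \tau^s_v \leq \sum_{a \in \arcs} \bar c_a$. The flow bounds and the monotonicity of $c_a$ and $c_a'$ turn this into a bound uniform over $d \in \uncertaintyset$, which is the form in which the proposition is actually used.
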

\begin{proof}
  The claim can be shown in analogy to Remark~1 and the proof of Proposition~3
  in \textcite{Beck_et_al:2024}.
\end{proof}

Finally, we mention that sufficiently large constants~$M^s_a$ and~$N^s_a$,
$a \in \arcs$, $s \in \origs$, to be used in Problems~\eqref{eq:minlp-ref:ue}
and~\eqref{eq:minlp-ref:so} can be obtained by exploiting
Propositions~\ref{prop:existence-flow-bounds} and~\ref{prop:bound-tau} together
with Assumption~\ref{as:cont-cost} and the specific structure of the travel
cost functions; see, e.g., \textcite{Beck_et_al:2024} for further details.

\subsection{Existence of Solutions}
\label{sec:existence-of-solutions}

To conclude this section, we now prove the existence of optimal solutions to
the congestion model~\eqref{eq:upper-level-problem} under the UE and the SO.

\begin{theorem}
  \label{thm:existence-of-sols}
  Under Assumptions~\ref{as:graph}--\ref{as:cont-cost}, the congestion
  model~\eqref{eq:upper-level-problem} admits an optimal solution under
  both the user equilibrium and the system optimum.
\end{theorem}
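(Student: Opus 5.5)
We will apply the Weierstrass theorem to the optimistic bilevel problem~\eqref{eq:upper-level-problem}, written as maximizing the continuous function~$\latency$ (Assumption~\ref{as:latency-fun}) over the graph set
\begin{equation*}
  \mathcal{G} \define \Defset[\,:\,]{(d,\flow,\comflow)}{d \in \uncertaintyset,\ (\flow,\comflow) \in F(d)}.
\end{equation*}
Because the optimality characterization is exact, we will work with the equivalent description via the KKT system. It therefore suffices to show that $\mathcal{G}$ is non-empty and compact. Non-emptiness is immediate: $\uncertaintyset \neq \emptyset$ by Assumption~\ref{as:uncertainty-set}, and Proposition~\ref{prop:existence-flow-bounds} gives $F(d)\neq\emptyset$ for every $d \in \uncertaintyset$.

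\textbf{Boundedness.} We first want the bounds of Proposition~\ref{prop:existence-flow-bounds} to hold for \emph{all} optimal flows, not just some. Under Assumption~\ref{as:cont-cost}, $c_a > 0$ strictly. Hence any optimal $\comflow^s$ carries no flow on a directed cycle, since cancelling the cycle would strictly decrease the UE integral objective and also the SO objective $c_a(\flow_a)\flow_a$, which is increasing because $c_a>0$ and $c_a'\geq 0$. Every optimal $\comflow^s$ is therefore acyclic, so $\comflow^s_a \le \sum_{k\in\commodities_s} d_k$. Since $\uncertaintyset$ is compact, $\sum_k d_k \le D \define \max_{d\in\uncertaintyset}\sum_k d_k < \infty$. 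Thus $\mathcal{G} \subseteq \uncertaintyset \times [0,\Abs{\origs}D]^{\Abs{\arcs}} \times [0,D]^{\Abs{\origs}\Abs{\arcs}}$ is bounded.

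\textbf{Closedness.} This is the main step: we must show the solution mapping $F$ has a closed graph. Take a sequence $(d^n,\flow^n,\comflow^n) \in \mathcal{G}$ converging to $(\bar d,\bar\flow,\bar\comflow)$. Then $\bar d \in \uncertaintyset$ because $\uncertaintyset$ is closed. The constraints \eqref{eq:comflow-to-flow}, \eqref{eq:flow-conservation}, and nonnegativity are linear in $(d,\flow,\comflow)$, so the limit is feasible for $\bar d$. For optimality we would argue directly with the lower-level objective $\varphi$, which is continuous by Assumption~\ref{as:cont-cost}. Given any $(\flow',\comflow')$ feasible for $\bar d$, we need feasible points for $d^n$ converging to it. Here the direct approach needs care, so we will use the variational-inequality form instead. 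Optimality of $\comflow^n$ is equivalent to
\begin{equation*}
  \sum_{s}\sum_a g_a(\flow^n_a)(z^s_a - \comflow^{n,s}_a) \ge 0
\end{equation*}
for all $z$ feasible for $d^n$, where $g_a = c_a$ (UE) or $g_a = c_a + \flow_a c_a'$ (SO). Feasible points can be transported between demands by decomposing flow into paths: fix, for each commodity, a path $P_k$ (Assumption~\ref{as:graph}), and map a feasible $z$ for $\bar d$ to $z + \sum_k (d^n_k - \bar d_k)\chi_{P_k}$. This may violate nonnegativity when $d^n_k < \bar d_k$. To avoid that, we first scale $z$ along a path decomposition, replacing each path flow of commodity $k$ by $(d^n_k/\bar d_k)$ times itself, or by $0$ if $\bar d_k = 0$, in which case we add $d^n_k\chi_{P_k}$. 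The resulting $z^n$ is feasible for $d^n$ and converges to $z$. Passing to the limit using continuity of $g_a$ gives the variational inequality at $(\bar d,\bar\flow,\bar\comflow)$, so $\bar\comflow$ is optimal and $\mathcal{G}$ is closed.

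Compactness and continuity of $\latency$ then yield an optimal solution under both principles via Weierstrass.
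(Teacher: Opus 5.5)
Your argument is correct. It shares the paper's Weierstrass skeleton: continuous $\latency$, non-empty compact $\uncertaintyset$, non-empty bounded follower responses, and a closed graph of $F$. You establish the two substantive ingredients differently, though.

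The paper takes boundedness from Propositions~\ref{prop:existence-flow-bounds} and~\ref{prop:bound-tau}. It gets closedness by noting that the graph is described by the KKT system~\eqref{eq:user-equi:kkt} or~\eqref{eq:system-opt:kkt}, i.e., by finitely many continuous constraints. That is short and reuses the reformulation, but it leaves two things implicit. First, the KKT set lives in $(d,\flow,\comflow,\tau)$-space, where each $\tau^s$ can be shifted by an arbitrary constant. Closedness after projecting out $\tau$ therefore needs a uniform bound on suitably normalized multipliers. Second, the flow bound must hold uniformly over $\uncertaintyset$ and for \emph{all} optimal flows, not just for some.

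You handle both explicitly. Acyclicity of every optimal $\comflow^s$ (both objectives are strictly increasing since $c_a>0$), together with compactness of $\uncertaintyset$, gives a uniform box. The variational inequality combined with your path-rescaling construction, which is an inner-semicontinuity property of the feasible-flow map, proves closedness of the graph in $(d,\flow,\comflow)$ without any multipliers. Your route is more self-contained and more rigorous; the paper's is more economical.

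Two small remarks. The flow decomposition of an arbitrary feasible $z$ for $\bar d$ may contain cycles; simply keep those unchanged, since they are circulations. Also, your opening sentence about working with the KKT system is never actually used. The same transported points $z^n$ would even let you argue directly with objective values, since $\varphi(\comflow^n) \le \varphi(z^n)$ passes to the limit by continuity of $\varphi$.
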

\begin{proof}
  The function~$\latency$ is continuous because of
  Assumption~\ref{as:latency-fun}. Moreover, the set~$\uncertaintyset$ is
  non-empty and compact due to Assumption~\ref{as:uncertainty-set} and, by
  Propositions~\ref{prop:existence-flow-bounds} and~\ref{prop:bound-tau}, the
  set~$F(d)$ is non-empty and bounded for all~$d \in \uncertaintyset$.
  In addition, the graph of the set-valued map~$F$ is closed as it is described
  by finitely many continuous equality and inequality constraints.
  Applying the Weierstra\ss\ theorem completes the proof.
\end{proof}


\section{Strengthened Formulations}
\label{sec:tightening}

We now describe the main strategies used to improve the computational
tractability of the congestion model~\eqref{eq:upper-level-problem}.
These techniques reduce the problem size and strengthen the formulation by
exploiting the network topology and structural properties of optimal flows.
In Section~\ref{sec:partitioning}, we discuss graph partitioning
techniques based on cut nodes and blocks that can be used to derive tighter
bounds for the flow variables~$\flow$ and~$\comflow$, which is what we do
in Section~\ref{sec:bound-tightening}.
In Section~\ref{sec:no-cycles}, we present valid inequalities that
eliminate cycle flows in the traffic network.

\subsection{Graph Partitioning}
\label{sec:partitioning}

Given the directed graph~$G = (\nodes, \arcs)$, we denote its
underlying simple undirected graph as~$\underline{G} = (\nodes, E)$ in which 
each arc $(i,j)$ is replaced by edge $\{i,j\}$ and no multiple edges are
allowed.
Before proceeding, we introduce some basic graph-theoretic concepts that will
be used throughout this section; see, e.g.,
Chapter~4 in \textcite{West:2001} for further details.

\begin{definition}[Connected Component]
  \label{def:connected-component}
  A \emph{connected component} of~$\underline{G}$ is a maximal
  subset~$C \subseteq \nodes$ such that, for all~$u,v \in C$,
  there exists a path connecting~$u$ and~$v$.
\end{definition}

\begin{definition}[Articulation Node]
  \label{def:articulation-node}
  An \emph{articulation node} (or \emph{cut node}) is a node whose removal
  increases the number of connected components of~$\underline{G}$.
\end{definition}

\begin{definition}[Bridge]
  \label{def:bridge}
  An edge~$e \in E$ is a \emph{bridge} (or \emph{cut edge}) if the
  graph \mbox{$(\nodes, E \setminus \set{e})$} has more connected components
  than~$\underline{G}$.
\end{definition}

\begin{definition}[$2$-Connected Graph]
  \label{def:2-connected}
  An undirected graph~$\underline{G}$ is said to be \emph{$2$-connected} (or
  \emph{node-biconnected}) if it is connected, has at least three nodes,
  and contains no cut node.
\end{definition}

\begin{definition}[Block]
  \label{def:block}
  A \emph{block} $B=(\nodes_B, \edges_B)$ of~$\underline{G}$ is a maximal
  connected subgraph of~$\underline{G}$ that has no cut node.
  If a block has a single edge, it is called a \emph{trivial block} or
  \emph{bridge}, otherwise it is called a \emph{non-trivial block}.
\end{definition}

Note that~$\underline{G}$ itself is a block if it is connected and has no cut
node. Moreover, we emphasize that every edge of~$\underline{G}$ belongs to
exactly one block and that two blocks share at most one node, which must be
an articulation node.

\begin{definition}[Block-Cut Tree]
  \label{def:block-cut-tree}
  The \emph{block-cut tree} of a connected graph~$\underline{G}$ is a
  tree~$T$ whose node set consists of the blocks and articulation nodes
  of~$\underline{G}$, with an edge between a block~$B$ and an articulation
  node~$v$ whenever~$v \in B$.
\end{definition}

For a set of nodes~$S \subseteq \nodes$, we denote by~$\underline{G}[S]$
and~$G[S]$ the subgraphs of~$\underline{G}$ and~$G$ induced by~$S$,
respectively.
In the following, we consider graph decomposition techniques based on blocks
and articulation nodes. Such decompositions are not only theoretically
appealing but also relevant for real-world networks. Empirical evidence
indicates that many practical networks contain a non-negligible fraction of
articulation points, e.g., \textcite{Tian_et_al:2017} document
substantial articulation structure in U.S.\ road networks, underscoring the
practical relevance of such decomposition techniques.

\begin{example}
  \label{ex:articulation-nodes}
  We consider the undirected graph~$\underline{G} = (\nodes, E)$
  with node set~$\nodes = \set{1,\ldots,11}$ and edge set~$E =
  \set{e_1,\ldots,e_{14}}$ illustrated in Figure~\ref{fig:graph_an}~(a).
  This graph has three articulation nodes, $c_1 = 1$, $c_2 = 5$, and~$c_3 = 9$,
  and six blocks:
  \begin{align*}
    B_1 & = (\set{1,2,3},\set{e_1,e_2,e_3}),\
          B_2 = (\set{1,4,5},\set{e_4,e_5,e_6}),
    \\
    B_3 & = (\set{5,6,7},\set{e_7,e_8,e_9}),\,
          B_4 = (\set{5,8,9},\set{e_{10},e_{11},e_{12}}),
    \\
    B_5 & = (\set{9,10},\set{e_{13}}),\,
          B_6 = (\set{9,11},\set{e_{14}}).
  \end{align*}
  Here, $B_1$, $B_2$, $B_3$, and~$B_4$ are non-trivial blocks ($2$-connected
  subgraphs), whereas~$B_5$ and~$B_6$ are trivial blocks (bridges).
  The corresponding block-cut tree is shown in
  Figure~\ref{fig:graph_an}~(b).
  \begin{figure}
  \centering
  \begin{tikzpicture}[
    transform shape, thick,
    every node/.style={font=\small},
    vertex/.style={circle, draw=black, fill=white, text=black, minimum
      size=0.45cm, font=\footnotesize\sffamily, inner sep=1pt},
    artnode/.style={circle, draw=black, fill=black, text=white, minimum
      size=0.45cm, font=\footnotesize\sffamily, inner sep=1pt},
    edge/.style={draw=black, line width=0.8pt},
    elabel/.style={font=\small, inner sep=1.5pt}
    ]
    
    \node[artnode] (1) at (1.5, 4) {$1$};
    \node[vertex] (2) at (0, 5.0) {$2$};
    \node[vertex] (3) at (0, 3) {$3$};
    \node[vertex] (4) at (3.5, 3) {$4$};
    \node[artnode] (5) at (3.5, 4.5) {$5$};
    \node[vertex] (6) at (1.5, 5.0) {$6$};
    \node[vertex] (7) at (3.5, 5.5) {$7$};
    \node[vertex] (8) at (5.5, 3) {$8$};
    \node[artnode] (9) at (7, 4.5) {$9$};
    \node[vertex] (10) at (7, 3.5) {$10$};
    \node[vertex] (11) at (8.5, 4.5) {$11$};
    
    \draw[edge] (1) -- (2) node[elabel, pos=0.5, above right] {$e_1$};
    \draw[edge] (1) -- (3) node[elabel, pos=0.5, below right] {$e_2$};
    \draw[edge] (3) -- (2) node[elabel, pos=0.5, below left] {$e_3$};
    \draw[edge] (1) -- (4) node[elabel, pos=0.5, below left] {$e_4$};
    \draw[edge] (1) -- (5) node[elabel, pos=0.5, above left] {$e_5$};
    \draw[edge] (4) -- (5) node[elabel, pos=0.4, right] {$e_6$};
    \draw[edge] (5) -- (6) node[elabel, pos=0.5, above right] {$e_7$};
    \draw[edge] (6) -- (7) node[elabel, pos=0.5, above] {$e_8$};
    \draw[edge] (7) -- (5) node[elabel, pos=0.5, right] {$e_{9}$};
    \draw[edge] (5) -- (9) node[elabel, pos=0.5, above] {$e_{10}$};
    \draw[edge] (5) -- (8) node[elabel, pos=0.5, above right] {$e_{11}$};
    \draw[edge] (9) -- (8) node[elabel, pos=0.5, above left] {$e_{12}$};
    \draw[edge] (9) -- (10) node[elabel, pos=0.5, right] {$e_{13}$};
    \draw[edge] (9) -- (11) node[elabel, pos=0.5, above] {$e_{14}$};
    \node[anchor=north, font=\small] at (3.75, 2) {(a) Graph~$\underline{G}$};
  \end{tikzpicture}
  
  \bigskip
  
  \begin{tikzpicture}[
    scale=0.75, transform shape, thick,
    every node/.style={font=\small},
    treenode/.style={circle, fill=black, minimum size=5pt, inner sep=0pt},
    edge/.style={draw=black, line width=0.8pt},
    nlabel/.style={font=\footnotesize\sffamily}
    ]
    
    
    \node[treenode, label={below:{$B_1$}}] (B1) at (0,   2) {};
    \node[treenode, label={below:{$c_1$}}] (c1) at (1.5, 2) {};
    \node[treenode, label={below:{$B_2$}}] (B2) at (3,   2) {};
    \node[treenode, label={below:{$c_2$}}] (c2) at (4.5, 2) {};
    \node[treenode, label={right:{$B_3$}}] (B3) at (4.5, 3) {};
    \node[treenode, label={below:{$B_4$}}] (B4) at (6,   2) {};
    \node[treenode, label={above:{$c_3$}}] (c3) at (7.5, 2) {};
    \node[treenode, label={right:{$B_5$}}] (B5) at (7.5, 1) {};
    \node[treenode, label={below:{$B_6$}}] (B6) at (9,   2) {};
    
    \draw[edge] (B1) -- (c1);
    \draw[edge] (c1) -- (B2);
    \draw[edge] (B2) -- (c2);
    \draw[edge] (c2) -- (B3);
    \draw[edge] (c2) -- (B4);
    \draw[edge] (B4) -- (c3);
    \draw[edge] (c3) -- (B5);
    \draw[edge] (c3) -- (B6);
    
    \node[anchor=north, font=\small] at (4.5, 0.8) {(b) Block-cut tree~$T$};
  \end{tikzpicture}
  \caption{The graph considered in Example~\ref{ex:articulation-nodes} with
    articulation nodes shown in black (a) and its block-cut tree (b).}%
  \label{fig:graph_an}%
\end{figure}%

\end{example}

Under Assumption~\ref{as:cont-cost}, UE and SO solutions
cannot have positive flow on directed cycles; see, e.g., Theorem~3.8 in
\textcite{Ahuja_et_al:1993}.
This property also holds for the source-aggregated formulation
in Problems~\eqref{eq:user-equi} and~\eqref{eq:system-opt}; see,
e.g., Section~5.2.3 in \textcite{Boyles_et_al:2025} for further details.
The absence of directed cycle flows allows us to strengthen the formulation of
the congestion models by fixing the values of certain variables in a
preprocessing step.
Specifically, any dipath that exits a block through an articulation node and
re-enters it through the same node forms a directed cycle. Hence,
all optimal flows with origin and destination nodes in the same block can
be restricted to dipaths entirely contained in that block.

\begin{proposition}
  \label{prop:partitions-paths}
  Suppose that Assumption~\ref{as:cont-cost} holds.
  Let~$B = (\nodes_B, \edges_B) \subseteq \underline{G}$ be a block and
  let $u, v \in \nodes_B$ be such that there exists a dipath from~$u$ to~$v$
  in~$G$. Then, the following statements~hold.
  \begin{enumerate}[label=(\roman*)]
  \item There exists at least one dipath from~$u$ to~$v$ in~$G$
    that is entirely contained in~$G[\nodes_B]$.
  \item Any dipath from~$u$ to~$v$ used in an optimal UE or SO solution is
    entirely contained in~$G[\nodes_B]$.
  \end{enumerate}
\end{proposition}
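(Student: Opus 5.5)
The plan is to prove (i) by a purely graph-theoretic argument on a simple dipath, and then derive (ii) from the fact that optimal UE and SO flows carry no positive flow on directed cycles. For (i), I would fix a dipath $P = (u = w_0, w_1, \ldots, w_m = v)$ in $G$; by shortcutting repeated nodes, I may assume $P$ is simple. The key claim is that any simple dipath whose endpoints both lie in $\nodes_B$ already has all its nodes in $\nodes_B$. Since $G[\nodes_B]$ is induced, every arc of $P$ then lies in $G[\nodes_B]$, which gives (i).

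To prove the claim, I would suppose $P$ leaves $\nodes_B$. Let $w_p$ be the last node in $\nodes_B$ before the first node outside it, and let $w_q$ with $q > p+1$ be the first node after that which lies in $\nodes_B$ again. Such a $q$ exists because $w_m = v \in \nodes_B$. The subpath $w_p, \ldots, w_q$ has only interior nodes outside $\nodes_B$.

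\begin{itemize}
\item If $w_p \neq w_q$, the undirected version of this subpath together with a $w_q$--$w_p$ path inside the connected block $B$ forms a cycle in $\underline{G}$ (when $B$ is a bridge, the single edge $\{w_p,w_q\}$ serves). The union of $B$ with this cycle is connected and has no cut node, since adding an ear with two distinct attachment points to a $2$-connected graph, or to an edge, preserves that property. This contradicts the maximality of $B$ in Definition~\ref{def:block}.
\item If $w_p = w_q$, then $P$ repeats a node, contradicting simplicity.
\end{itemize}

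I expect the main obstacle to be the first case, where the ear argument must be stated carefully. In particular, the degenerate case where $B$ is a trivial block needs a check: the cycle through the edge $\{w_p,w_q\}$ plus the outside path gives a $2$-connected subgraph strictly containing $B$.

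For (ii), I would take an optimal UE or SO solution together with a flow decomposition of each source flow $\comflow^s$ into dipaths and cycles. Any dipath from $u$ to $v$ carrying positive flow must be simple; otherwise it contains a directed cycle with positive flow, which is excluded by Theorem~3.8 in \textcite{Ahuja_et_al:1993} under Assumption~\ref{as:cont-cost} (the aggregated version is cited above). Applying the claim from (i) to this simple dipath then shows it lies entirely in $G[\nodes_B]$. For the same reason I would interpret ``used dipath'' as a path of the flow decomposition, which is simple by the no-cycle property.
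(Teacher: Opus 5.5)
Your proof is correct, and it reaches the key topological fact by a different route than the paper. The paper uses the block-cut tree $T$. A dipath can leave $G[\nodes_B]$ only through a cut node $w$; it then stays in the subtree of $T - B$ attached at $w$; and since $T$ is a tree, it can re-enter only through $w$. Every excursion is therefore a directed cycle, which is deleted to obtain (i) and ruled out by the no-cycle property of optimal flows to obtain (ii).

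You argue directly from the maximality in Definition~\ref{def:block}. An excursion with distinct exit and re-entry nodes is an ear whose addition to $B$ yields a strictly larger connected subgraph without cut node. An excursion returning to the same node is exactly the paper's cycle, and simplicity excludes it. Your argument is self-contained: the tree property of $T$ is itself a consequence of this same maximality argument. It also does not require $\underline{G}$ to be connected, which Definition~\ref{def:block-cut-tree} presupposes. And it gives a stronger form of (i): every simple $u$--$v$ dipath lies in $G[\nodes_B]$.

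As a consequence, (ii) holds for every path in a flow decomposition of any feasible flow, since such paths are simple by construction. So your appeal to the no-cycle property in (ii) is not what makes the used paths simple. Its real role is to exclude cycle components, which is what matters when fixing $\comflow^s_a = 0$ in Corollary~\ref{cor:fix-vars-outside-blocks-path}. The paper's version is shorter once standard block-cut tree facts are granted, and it connects directly to the cycle-removal intuition reused later for the cycle-elimination constraints.
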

\begin{proof}
  We first recall a key property of block-cut trees that will be used in the
  proof. To this end, let~$T$ be a block-cut tree of~$\underline{G}$, i.e., a
  block~$B$ is a node of~$T$ and its neighbors in~$T$ are precisely the cut
  nodes of~$\underline{G}$ contained in~$B$.
  If a path in~$\underline{G}$ leaves~$B$ through a node~$w \in \nodes_B$,
  then~$w$ must be a cut node. This is because the edge connecting~$w$ to a
  node outside of~$\nodes_B$ belongs to a different block~$B'$ and~$w \in B
  \cap B'$ implies that~$w$ is a cut node.
  Once the path leaves~$B$ through~$w$, it continues within the subtree of~$T -
  B$ that contains~$w$. Because~$T$ is a tree, the only way for the path to
  return to~$B$ is through node~$w$.
  Because every arc in~$G$ corresponds to an edge in~$\underline{G}$, the same
  property holds for dipaths in~$G$: any dipath that exits~$G[\nodes_B]$ must
  re-enter through the same cut node~$w$, thereby forming a directed cycle
  through~$w$.
 
  We now prove the two statements. By assumption, there exists a dipath
  from~$u$ to~$v$ in~$G$. As shown above, each time this dipath
  leaves~$G[\nodes_B]$, it must re-enter through the same cut node, forming a
  directed cycle. Removing all such cycles yields a dipath from~$u$ to~$v$ that
  is entirely contained in~$G[\nodes_B]$, which proves~(i).
  Under Assumption~\ref{as:cont-cost}, optimal UE and SO solutions do not carry
  flow on directed cycles.
  Due to~(i), any dipath that leaves and later re-enters~$G[\nodes_B]$
  forms a directed cycle.
  Hence, all dipaths carrying positive flow in an optimal solution must be
  entirely contained in~$G[\nodes_B]$, which proves~(ii).
\end{proof}

By Part~(ii) of Proposition~\ref{prop:partitions-paths}, we can a priori fix
certain variables of the congestion model~\eqref{eq:upper-level-problem} to
zero. More formally, we have the following result.

\begin{corollary}
  \label{cor:fix-vars-outside-blocks-path}
  Suppose that Assumptions~\ref{as:graph}--\ref{as:cont-cost} hold.
  Let~$(d,\flow,\comflow,\tau,y)$ be an optimal solution to either
  Problem~\eqref{eq:minlp-ref:ue} or Problem~\eqref{eq:minlp-ref:so}.
  Further, let~$s \in \origs$ be given arbitrarily and let~$B_s$ denote
  a block of~$\underline{G}$ containing~$s$.
  For each~$k = (s, t_k) \in \commodities_s$, let~$B_{t_k}$ be a block
  of~$\underline{G}$ containing~$t_k$ and let~$\mathcal{B}_k$ be the
  set of blocks on the unique path from~$B_s$ to~$B_{t_k}$ in the
  block-cut tree~$T$ of~$\underline{G}$.
  Moreover, let
  \begin{equation*}
    \mathcal{B}_s = \bigcup_{k \in \commodities_s} \mathcal{B}_k
    \quad \text{and} \quad
    \arcs_{\mathcal{B}_s} = \Defset{(i,j) \in \arcs}{\set{i,j}
      \in E_{B} \text{ for some } B \in \mathcal{B}_s}.
  \end{equation*}
  Then, it holds
  \begin{equation*}
    \comflow^s_a = 0,\, y^s_a = 0, \quad a \notin \arcs_{\mathcal{B}_s}.
  \end{equation*}
\end{corollary}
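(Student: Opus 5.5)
The plan is to reduce the statement to Proposition~\ref{prop:partitions-paths} via a path decomposition of the source-aggregated flow $\comflow^s$, and then handle the binary variables separately. Let $(d,\flow,\comflow,\tau,y)$ be optimal for Problem~\eqref{eq:minlp-ref:ue} (the SO case is identical). With valid big-$M$s, constraints~\eqref{eq:user-equi:big-m-ref} are equivalent to~\eqref{eq:user-equi:kkt-complementarity}. Since the KKT conditions are necessary and sufficient for the convex lower level, $(\flow,\comflow)$ is an optimal UE flow for demand $d$. By the acyclicity property cited before Proposition~\ref{prop:partitions-paths}, $\comflow^s$ carries no flow on directed cycles. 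The flow decomposition theorem then writes $\comflow^s$ as a nonnegative sum of flows on dipaths from $s$ to destinations $t_k$ with $k \in \commodities_s$. So it suffices to show that every such dipath uses only arcs of $\arcs_{\mathcal{B}_s}$, and more precisely that a dipath from $s$ to $t_k$ carrying positive flow uses only arcs whose underlying edges lie in blocks of $\mathcal{B}_k$.

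The core step is a block-cut tree argument that extends Proposition~\ref{prop:partitions-paths}(ii), which only treats $u,v$ in a common block, to endpoints in different blocks. Take a used dipath $P$ from $s$ to $t_k$. Its underlying walk in $\underline{G}$ is a simple path, since $P$ is a dipath without repeated nodes. Each edge of $P$ lies in exactly one block, so the consecutive blocks visited by $P$, with transitions at cut nodes, trace a walk in $T$ from a block containing $s$ to a block containing $t_k$. I will show that this walk never revisits a block. If $P$ left a block $B$ and later returned to it, then, as in the proof of Proposition~\ref{prop:partitions-paths}, it would have to leave and re-enter through the same cut node $w$. That would repeat $w$, contradicting simplicity, or equivalently produce a directed cycle, which acyclicity excludes. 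A walk in a tree with no repeated block nodes is the unique tree path, so the blocks traversed are exactly those on the $T$-path between the first and last blocks.

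The remaining obstacle is the ambiguity in the choice of $B_s$ and $B_{t_k}$ when $s$ or $t_k$ is itself a cut node lying in several blocks. I would argue that the first edge of $P$ lies in some block containing $s$, and the $T$-path from that block either coincides with the path from $B_s$ or differs from it only by leading through $s$. If $B_s$ is not the first block, then $s$ is a cut node adjacent to both blocks in $T$, and the path from $B_s$ passes through $s$ and then the first block, so the first block lies on it. Similarly at the $t_k$ end. I need to check this carefully, because the tree path from $B_s$ could a priori avoid $P$'s first block; I believe the tree structure forces containment, since any path from $B_s$ to $B_{t_k}$ must pass through cut node $s$ whenever $B_s$ is not on $P$'s block sequence.

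With that established, $\comflow^s_a = 0$ for all $a \notin \arcs_{\mathcal{B}_s}$. For $y^s_a$, note that setting $y^s_a=0$ on these arcs keeps~\eqref{eq:user-equi:big-m-ref:flow} feasible since $\comflow^s_a=0$. Constraint~\eqref{eq:user-equi:big-m-ref:cost} then requires only $0 \le c_a+\tau^s_i-\tau^s_j \le M^s_a$, which holds for valid big-$M$s by Proposition~\ref{prop:bound-tau}. The objective does not depend on $y$, so an optimal solution with these $y$ values exists. I read the corollary's claim for $y$ in this w.l.o.g. sense.
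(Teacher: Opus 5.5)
Your proof is correct and follows essentially the paper's route. The paper states the corollary without a separate proof and obtains it directly from Proposition~\ref{prop:partitions-paths}(ii), i.e., from the absence of cycle flow together with the fact that a dipath leaving a block can only re-enter it through the same cut node. Your argument that the block sequence of a used simple $s$--$t_k$ dipath is a repetition-free walk in $T$ makes explicit the extension to $s$ and $t_k$ lying in different blocks, which the paper leaves implicit. The endpoint step you flagged does close. Since $s$ and $t_k$ are endpoints of the simple path $P$, neither is one of its transition cut nodes. Hence attaching $B_s$ via $s$ and $B_{t_k}$ via $t_k$ to $P$'s block sequence cannot repeat a vertex of the tree $T$, so that sequence lies on the $T$-path from $B_s$ to $B_{t_k}$.

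Your w.l.o.g.\ reading of $y^s_a=0$ is not merely convenient but necessary. An optimal solution may have $y^s_a=1$ on an arc outside $\arcs_{\mathcal{B}_s}$ whose reduced cost happens to vanish, for example on a branch hanging off a cut node. Switching such a $y^s_a$ to $0$ is feasible simply because $y^s_a=1$ forces the reduced cost in \eqref{eq:user-equi:big-m-ref:cost} (resp.\ \eqref{eq:minlp-ref:so:big-m-cost}) to equal $0\le M^s_a$, so you do not need Proposition~\ref{prop:bound-tau} there.
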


We can also fix additional variables that would take the value of
zero in any optimal solution. For instance, we can a priori fix the
variables~$\comflow^s_a$ and~$y^s_a$ for all arcs~$a \in \arcs$ that
are not reachable from an origin~$s \in \origs$.

\begin{lemma}
  \label{lem:fix-vars-unreachable}
  Let~$d \in \uncertaintyset$ be given arbitrarily.
  For each~$s \in \origs$, let the set of nodes that are reachable from
  source~$s$ be given by
  \begin{equation*}
    \nodes_s = \Defset{v \in \nodes}{\text{there exists a $s$--$v$ dipath}}.
  \end{equation*}
  Then, for any~$(\flow,\comflow) \geq 0$ that
  satisfies~\eqref{eq:comflow-to-flow} and~\eqref{eq:flow-conservation}, it
  holds~$\comflow^s_a = 0$ for all~$s \in \origs$ and~$a = (i,j) \in \arcs$
  with~$i \notin \nodes_s$.
\end{lemma}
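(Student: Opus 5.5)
The plan is a cut argument applied to the single source $s$, followed by a flow decomposition to handle the arcs that the cut does not control. Fix $s \in \origs$ and put $U \define \nodes \setminus \nodes_s$. If $U = \emptyset$ there is nothing to prove, so assume otherwise. Two facts come straight from the definitions. First, $s \in \nodes_s$, so $s \notin U$. Second, no arc $(i,j) \in \arcs$ has $i \in \nodes_s$ and $j \in U$, because appending such an arc to an $s$--$i$ dipath would make $j$ reachable.

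\textbf{Step 1 (cut argument).} Sum the flow conservation constraints~\eqref{eq:flow-conservation} for source $s$ over all $v \in U$. Every arc with both endpoints in $U$ appears once with a plus sign and once with a minus sign, so it cancels. By the second fact, no arc enters $U$ from $\nodes_s$. What remains is
\begin{equation*}
  \sum_{(i,j) \in \arcs:\, i \in U,\, j \in \nodes_s} \comflow^s_{(i,j)}
  = \sum_{v \in U} d^s_v .
\end{equation*}
Since $s \notin U$, each $d^s_v$ with $v \in U$ equals either $-d_k$ or $0$. Hence the right-hand side is at most $0$, because $d \in \uncertaintyset \subseteq \R_{\geq 0}^{\Abs{\commodities}}$ by Assumption~\ref{as:uncertainty-set}. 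The left-hand side is nonnegative since $\comflow \geq 0$. Both sides therefore vanish, which gives $\comflow^s_a = 0$ for every arc $a$ leaving $U$. As a by-product, no positive demand of $s$ is destined to $U$, which is consistent with Assumption~\ref{as:graph}.

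\textbf{Step 2 (arcs inside $U$).} This is where I expect the main obstacle. The cut identity says nothing about arcs with both endpoints in $U$. Applying the standard flow decomposition to $\comflow^s$ (cf.\ Theorem~3.8 in \textcite{Ahuja_et_al:1993}), every path component starts at $s$ and so lies inside $G[\nodes_s]$. Any flow on arcs internal to $U$ must therefore come from cycle components lying entirely in $G[U]$. A positive circulation on a directed cycle inside $U$ satisfies~\eqref{eq:comflow-to-flow} and~\eqref{eq:flow-conservation} together with nonnegativity, so for such arcs the claim cannot follow from these constraints alone.

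I would close this gap as follows. Show that removing all such cycle components leaves a flow that is still feasible. Then use Assumption~\ref{as:cont-cost} (positive, non-decreasing costs) to show that this removal does not increase the UE or SO objective, and that optimal solutions carry no cycle flow at all. The conclusion $\comflow^s_a = 0$ for $i \notin \nodes_s$ would then hold for all arcs in the sense relevant to the models, namely as a valid fixing for optimal solutions. Arcs leaving $U$ are covered unconditionally by Step~1, for every feasible flow. Whether arcs internal to $U$ can be handled without the cycle-removal step is the point I would verify first. If $G[U]$ contains a directed cycle, I expect that an extra hypothesis of this kind cannot be avoided.
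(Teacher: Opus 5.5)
Your proposal is correct, and it is more careful than the paper. The paper's entire proof is the remark that the claim ``immediately follows from the definition of the set~$\nodes_s$.'' That remark tacitly assumes that every unit of $\comflow^s$ travels along a dipath starting at~$s$, and circulations violate exactly this assumption. Your Step~1 cut argument rigorously settles the arcs from $U=\nodes\setminus\nodes_s$ into $\nodes_s$.

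Your Step~2 worry is not a hypothetical obstacle: it gives an actual counterexample to the lemma as literally stated. Take $\nodes=\set{s,t,u,w}$, $\arcs=\set{(s,t),(u,w),(w,u),(u,s)}$, $\commodities=\set{(s,t)}$, $\uncertaintyset=\set{1}$ and $c_a\equiv 1$. Set $\comflow^s_{(s,t)}=\comflow^s_{(u,w)}=\comflow^s_{(w,u)}=1$, $\comflow^s_{(u,s)}=0$ and $\flow=\comflow^s$. All assumptions hold and \eqref{eq:comflow-to-flow} and \eqref{eq:flow-conservation} are satisfied. Yet $\nodes_s=\set{s,t}$ and $\comflow^s_{(u,w)}=1$.

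Your decomposition argument in fact characterizes the situation exactly. For fixed~$s$, the literal claim holds for all feasible flows if and only if $G[\nodes\setminus\nodes_s]$ contains no directed cycle. For ``only if'', add a small circulation on such a cycle to any feasible flow, which exists by Assumption~\ref{as:graph}. So there is nothing left to ``verify first'': the statement genuinely needs an extra hypothesis, and the paper's one-line proof overlooks this.

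The hypothesis you propose, $(\flow,\comflow)\in F(d)$, is the right repair, and your closing step works. Subtracting a cycle component preserves feasibility. Since each $c_a$ is positive and non-decreasing, it strictly decreases both $\sum_a\int_0^{\flow_a}c_a(\xi)\,\mathrm{d}\xi$ and $\sum_a c_a(\flow_a)\flow_a$. Hence optimal flows are cycle-free, and your decomposition then gives $\comflow^s_a=0$ for every arc with tail outside $\nodes_s$; Step~1 becomes a special case.

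An equivalent route: summing the complementarity equalities $c_a(\flow_a)+\tau^s_i-\tau^s_j=0$ around a cycle carrying positive $\comflow^s$ yields $\sum_{a\in r}c_a(\flow_a)=0$. This is the same contradiction the paper itself uses for its cycle-elimination inequalities.

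This corrected version is all the paper actually needs. The places where the lemma is invoked (Corollary~\ref{cor:fix-vars-unreachable} and Proposition~\ref{prop:tightened-flow-bounds}) concern flows that satisfy the lower-level KKT conditions, and hence lie in $F(d)$.
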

\begin{proof}
  The claim immediately follows from the definition of the set~$\nodes_s$, $s
  \in \origs$.
\end{proof}

We use Lemma~\ref{lem:fix-vars-unreachable} to fix some of the
binary variables~$y^s_a$ in Problems~\eqref{eq:minlp-ref:ue}
and~\eqref{eq:minlp-ref:so}, which indicate whether arc~$a \in \arcs$ is
used by any commodity with origin~$s \in \origs$.

\begin{corollary}
  \label{cor:fix-vars-unreachable}
  Suppose that Assumptions~\ref{as:graph}--\ref{as:cont-cost} hold.
  Let~$(d,\flow,\comflow,\tau,y)$ be feasible for either
  Problem~\eqref{eq:minlp-ref:ue} or Problem~\eqref{eq:minlp-ref:so}.
  For each~$s \in \origs$, let the set of nodes~$\nodes_s$ reachable
  from source~$s$ be defined as in Lemma~\ref{lem:fix-vars-unreachable}.
  Then, $y^s_a = 0$ holds for all~$s \in \origs$ and~$a = (i,j) \in \arcs$
  with~$i \notin \nodes_s$.
\end{corollary}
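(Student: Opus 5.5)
The plan is to combine Lemma~\ref{lem:fix-vars-unreachable} with the two big-$M$ constraints that involve $y^s_a$. Fix $s \in \origs$ and an arc $a = (i,j) \in \arcs$ with $i \notin \nodes_s$, and let $(d,\flow,\comflow,\tau,y)$ be feasible for Problem~\eqref{eq:minlp-ref:ue} or Problem~\eqref{eq:minlp-ref:so}. Feasibility includes $d \in \uncertaintyset$ together with \eqref{eq:comflow-to-flow}, \eqref{eq:flow-conservation} and $\comflow \geq 0$. Lemma~\ref{lem:fix-vars-unreachable} therefore applies and gives $\comflow^s_a = 0$. So the flow-side constraint $0 \leq \comflow^s_a \leq N^s_a y^s_a$ holds for either value of $y^s_a$, and it is the cost-side constraint \eqref{eq:user-equi:big-m-ref:cost} (respectively \eqref{eq:minlp-ref:so:big-m-cost}) that has to exclude $y^s_a = 1$.

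Suppose $y^s_a = 1$. The cost-side constraint then becomes tight: $\tau^s_j - \tau^s_i = c_a(\flow_a)$ in the UE case, or $\tau^s_j - \tau^s_i = c_a(\flow_a) + \flow_a c_a'(\flow_a)$ in the SO case. In both cases $\tau^s_j - \tau^s_i > 0$, because $c_a > 0$, $c_a' \geq 0$ and $\flow_a \geq 0$ by Assumption~\ref{as:cont-cost}. For every other arc $(u,w)$ the lower bound $\tau^s_w - \tau^s_u \leq c_{(u,w)}(\cdot)$ (plus the SO term) still holds. I would try to contradict these relations by using the structure of the unreachable set $U \define \nodes \setminus \nodes_s$. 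No arc leads from $\nodes_s$ into $U$, and $s \in \nodes_s$. Consequently every node $v \in U$ has $d^s_v = 0$, since every destination with positive demand $d_k > 0$ is reachable by Assumption~\ref{as:graph}.

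The main obstacle is exactly this step. The potentials $\tau^s$ on $U$ are essentially unconstrained, so the tight relation above can be satisfied by choosing $\tau^s_j$ large. Pure potential arguments are then unlikely to give a contradiction unless the potentials are normalized, for instance $\tau^s_v \in [0,\bar\tau]$ via Proposition~\ref{prop:bound-tau}, or unless the constants $M^s_a$ are chosen so that $y^s_a = 1$ is excluded for arcs that cannot carry $s$-flow. My first attempt would therefore be to use the paper's definition that $y^s_a$ indicates whether flow from $s$ uses $a$, i.e.\ $y^s_a = 1 \Rightarrow \comflow^s_a > 0$. With this convention the equality $\comflow^s_a = 0$ from Lemma~\ref{lem:fix-vars-unreachable} immediately yields $y^s_a = 0$. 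If that convention is not available, I would restrict the statement to an equivalent feasible point in which $\tau^s$ is normalized on $U$, and derive the contradiction there.
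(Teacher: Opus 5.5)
Your diagnosis in the last paragraph is correct, but neither route closes the gap, and no route can, because the statement is false as written. Route~1 relies on the implication $y^s_a = 1 \Rightarrow \comflow^s_a > 0$. That implication is not a constraint of \eqref{eq:minlp-ref:ue} or \eqref{eq:minlp-ref:so}; the big-$M$ constraints only enforce the converse. Choosing $M^s_a$ cleverly does not help either: when $y^s_a = 1$, the cost constraint reads $0 \leq \cdot \leq 0$ whatever the value of $M^s_a$. Route~2 fails too. With $y^s_a = 1$, the value $\tau^s_i$ is pinned to $\tau^s_j$ minus the arc's (marginal) cost, so you can only normalize $\tau^s$ on $U$ after changing $y^s_a$ itself.

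Here is a counterexample. Take nodes $s,t,u$, arcs $(s,t)$ and $(u,t)$, a single commodity $(s,t)$, $\uncertaintyset = \set{d}$ with $d > 0$, and $c_a(\flow_a) = 1 + \flow_a$ on both arcs. All assumptions hold, and $\nodes_s = \set{s,t}$. Consider the point
\begin{itemize}
\item $\comflow^s_{(s,t)} = d$ and $\comflow^s_{(u,t)} = 0$,
\item $(\tau^s_s,\tau^s_t,\tau^s_u) = (0,1+d,d)$,
\item $y^s_{(s,t)} = y^s_{(u,t)} = 1$.
\end{itemize}
It satisfies every constraint of \eqref{eq:minlp-ref:ue}: both cost expressions vanish, and $N^s_{(s,t)} \geq d$. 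It is even optimal, because the feasible flow is unique. Yet $y^s_{(u,t)} = 1$ with $u \notin \nodes_s$. Its potentials are nonnegative and bounded, so normalizing via Proposition~\ref{prop:bound-tau} gains nothing. Taking $\tau^s = (0,1+2d,2d)$ gives the same situation for \eqref{eq:minlp-ref:so}.

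What is true, and all the preprocessing needs, is that the fixing is without loss of generality. Take any feasible $(d,\flow,\comflow,\tau,y)$ and reset $y^s_a \define 0$ for every $a = (i,j)$ with $i \notin \nodes_s$.
\begin{itemize}
\item Since $\comflow^s_a = 0$ on these arcs, the flow constraint becomes $0 \leq 0 \leq 0$.
\item If $y^s_a$ was $1$, the cost expression was $0$, so the cost constraint now only requires $0 \leq 0 \leq M^s_a$.
\item The objective is unchanged, and any cycle-elimination inequalities only become looser.
\end{itemize}
So adding $y^s_a = 0$ removes no $(d,\flow,\comflow,\tau)$ and preserves the optimal value. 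The paper gives nothing beyond your route~1: the corollary is stated without proof, right after the remark that $y^s_a$ indicates usage of $a$. This w.l.o.g.\ version is the corollary's real content, and it is what you should prove.

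Also be careful with Lemma~\ref{lem:fix-vars-unreachable}. For arbitrary feasible flows it fails on directed cycles inside $G[U]$, where $U = \nodes \setminus \nodes_s$, because a circulation there satisfies \eqref{eq:flow-conservation}. For points feasible for the MINLPs, $\comflow^s_a = 0$ does hold, but via complementarity rather than flow conservation alone:
\begin{itemize}
\item Sum \eqref{eq:flow-conservation} over $U$. Here $d^s_v = 0$ by Assumption~\ref{as:graph}, and no arc enters $U$, so no $s$-flow leaves $U$.
\item Hence $\comflow^s$ restricted to $G[U]$ is a circulation.
\item A positively loaded cycle $r$ in that circulation would force $y^s_a = 1$ on $r$. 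Summing the tight cost constraints would then give $\sum_{a \in r} c_a(\flow_a) = 0$, a contradiction, exactly as in the proof of the cycle-elimination inequalities.
\end{itemize}
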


Lemma~\ref{lem:fix-vars-unreachable} also allows us to reduce the number of
constraints in the model by removing the redundant flow conservation
constraints
\begin{equation*}
  \sum_{a \in \outarcs{v}} \comflow^s_a - \sum_{a \in \inarcs{v}} \comflow^s_a
  = 0, 
  \quad s \in \origs,\, v \notin \nodes_s,
\end{equation*}
from the original congestion model.

\subsection{Improved Variable Bounds}
\label{sec:bound-tightening}

Building on the results of Section~\ref{sec:partitioning}, we now
compute tighter bounds for the flow variables~$\flow$ and~$\comflow$,
which can be used to obtain smaller values for the big-$M$
constants in Problems~\eqref{eq:minlp-ref:ue} and~\eqref{eq:minlp-ref:so}.

\begin{proposition}
  \label{prop:tightened-flow-bounds}
  Suppose that Assumptions~\ref{as:graph}, \ref{as:uncertainty-set},
  and~\ref{as:cont-cost} hold, and
  let~$d \in \uncertaintyset$ and $(\flow,\comflow) \in F(d)$ be given
  arbitrarily.
  For each~$s \in \origs$ and~$k \in \commodities_s$,
  let~$\mathcal{B}_k$ be defined as in
  Corollary~\ref{cor:fix-vars-outside-blocks-path} and
  \begin{equation*}
    \arcs_{\mathcal{B}_k} \define \Defset{(i,j) \in \arcs}{\set{i,j} \in E_B
    \text{ for some } B \in \mathcal{B}_k}.
  \end{equation*}
  Moreover, let the set of nodes~$\nodes_s$ reachable from source~$s$ be
  defined as in Lemma~\ref{lem:fix-vars-unreachable}.
  Then, for all~$a = (i,j) \in \arcs$ and~$s \in \origs$, the flows satisfy
  \begin{equation*}
    0 \leq \comflow_a^s \leq \bar \comflow_a^s \define
    \sum_{k \in \commodities_s} d_k \vv{1}_{\set{a \in \arcs_{\mathcal{B}_k}}},
  \end{equation*} 
  as well as
  \begin{equation*}
    0 \leq \flow_a \leq \bar \flow_a \define
    \sum_{s \in \origs} \sum_{k \in \commodities_s}
    d_k \vv{1}_{\set{a \in \arcs_{\mathcal{B}_k}}}, 
  \end{equation*} 
  where~$\vv{1}_{\set{a \in \arcs_{\mathcal{B}_k}}}$ is an indicator function
  that equals~$1$ if~$a \in \arcs_{\mathcal{B}_k}$ and~$0$ otherwise.
\end{proposition}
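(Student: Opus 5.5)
The plan is to decompose the aggregated flow $\comflow^s$ into path flows per commodity and then bound each arc by the demands of the commodities whose paths can actually reach it. Fix $s \in \origs$ and an optimal $(\flow,\comflow) \in F(d)$. Since $\comflow^s \geq 0$ satisfies the flow conservation constraints~\eqref{eq:flow-conservation} with supply $\sum_{k \in \commodities_s} d_k$ at $s$ and demand $d_k$ at each $t_k$, the flow decomposition theorem (e.g., Theorem~3.5 in \textcite{Ahuja_et_al:1993}) writes $\comflow^s$ as a sum of flows on $s$--$t_k$ dipaths and on directed cycles. As used in the proof of Proposition~\ref{prop:existence-flow-bounds} and in Section~\ref{sec:partitioning}, optimal UE and SO flows carry no positive flow on directed cycles. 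I would apply this for the source-aggregated formulation by citing Section~5.2.3 in \textcite{Boyles_et_al:2025}. Hence
\begin{equation*}
  \comflow^s = \sum_{k \in \commodities_s} \sum_{P \in \mathcal{P}_k} h_P \chi^P,
  \qquad \sum_{P \in \mathcal{P}_k} h_P = d_k,\ h_P \geq 0,
\end{equation*}
where $\mathcal{P}_k$ is the set of $s$--$t_k$ dipaths and $\chi^P$ is the arc-incidence vector of $P$.

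The key step is to show that every dipath $P \in \mathcal{P}_k$ with $h_P > 0$ uses only arcs in $\arcs_{\mathcal{B}_k}$. Every arc of $P$ corresponds to an edge in exactly one block. Consider the sequence of blocks that $P$ traverses; consecutive blocks share an articulation node, so this sequence is a walk in the block-cut tree $T$ from $B_s$ to $B_{t_k}$. If the walk visits some block $B$ that is not on the unique tree path $\mathcal{B}_k$, then, because $T$ is a tree, the walk enters $B$'s branch through an articulation node $w$ and must return through the same $w$. Then $P$ visits $w$ twice and contains a directed cycle, which contradicts simplicity of $P$; equivalently, it contradicts the cycle-free property via the argument of Proposition~\ref{prop:partitions-paths}(ii).

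A subtlety needs care here: $s$ or $t_k$ may be an articulation node lying in several blocks, so the choice of $B_s$ and $B_{t_k}$ matters. I would handle this by taking $B_s$ and $B_{t_k}$ as the blocks containing the first and last arcs of $P$, or by noting that the union over admissible choices is what the definition intends. I expect this to be the main obstacle and would try to state the walk-in-$T$ argument cleanly. If the walk only revisits blocks on the path, that is still fine, since those arcs lie in $\arcs_{\mathcal{B}_k}$ anyway.

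Given this, for each arc $a$ we have $\sum_{P \in \mathcal{P}_k,\, h_P>0} h_P \chi^P_a \leq d_k \vv{1}_{\set{a \in \arcs_{\mathcal{B}_k}}}$, because each path carries $a$ at most once and the total flow on the paths of commodity $k$ is $d_k$. Summing over $k \in \commodities_s$ gives $\comflow^s_a \leq \bar\comflow^s_a$, and nonnegativity is immediate. Summing over $s$ and using~\eqref{eq:comflow-to-flow} gives $0 \leq \flow_a \leq \bar\flow_a$. The reachability set $\nodes_s$ enters only as a consistency remark, via Lemma~\ref{lem:fix-vars-unreachable}: arcs whose tail is unreachable from $s$ get zero flow, which is compatible with the bound.
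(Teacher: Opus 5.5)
Your argument is correct. It is a fleshed-out version of the paper's proof rather than a genuinely different one. The paper only says the claim follows from Proposition~\ref{prop:existence-flow-bounds}, Corollary~\ref{cor:fix-vars-outside-blocks-path}, Lemma~\ref{lem:fix-vars-unreachable}, and~\eqref{eq:comflow-to-flow}. Read literally, these give only the source-level bound $\comflow^s_a \le \sum_{k\in\commodities_s} d_k\,\mathbf{1}_{\set{a\in\arcs_{\mathcal{B}_s}}}$, because Corollary~\ref{cor:fix-vars-outside-blocks-path} localizes the aggregated flow $\comflow^s$ only to the union of the sets $\arcs_{\mathcal{B}_k}$.

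The commodity-wise indicator in the statement needs exactly your extra step:
\begin{itemize}
\item split the acyclic $\comflow^s$ into $s$--$t_k$ path flows carrying $d_k$ per commodity;
\item show that each such path stays inside $\arcs_{\mathcal{B}_k}$.
\end{itemize}
Your route also gives the bound for every element of $F(d)$, not just for some optimal solution, which is how Proposition~\ref{prop:existence-flow-bounds} is phrased. You are also right that $\nodes_s$ plays no role in the bound itself.

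The one loose end is the choice of $B_s$ and $B_{t_k}$, and neither of your two suggestions is needed. Reading the definition as a union over choices would actually weaken the bound. The statement holds for every fixed choice, by a direct separation argument.

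Suppose a simple $s$--$t_k$ dipath $P$ uses an arc of a block $B\notin\mathcal{B}_k$. Let $w$ be the neighbor of $B$ in $T$ on the tree path from $B$ to the $B_s$--$B_{t_k}$ path in $T$. Then $w$ is a cut node. Every tree path from $B$ to $B_s$ or to $B_{t_k}$ starts with the step $B \to w$. Hence the nodes of $B$ other than $w$ lie in a different component of $\underline{G}-w$ than the nodes of $B_s$ and of $B_{t_k}$ other than $w$.

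Since $P$ uses an arc of $B$, it visits a node of $B$ other than $w$. Since $s\neq t_k$, at most one of $s,t_k$ equals $w$. So $P$ must be at $w$ both before and after that visit, i.e., it visits $w$ twice, contradicting simplicity.

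This replaces your walk-in-$T$ argument. It also removes the need to relate $B_s$ to the block containing the first arc of $P$. A non-minimal choice of $B_s$ or $B_{t_k}$ merely enlarges $\mathcal{B}_k$ and keeps the bound valid.
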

\begin{proof}
  The claim immediately follows from
  Proposition~\ref{prop:existence-flow-bounds},
  Corollary~\ref{cor:fix-vars-outside-blocks-path},
  Lemma~\ref{lem:fix-vars-unreachable}, and the
  constraints in~\eqref{eq:comflow-to-flow}.
\end{proof}

\subsection{Cycle-Elimination Constraints}
\label{sec:no-cycles}

Because UE and SO solutions cannot carry positive flow on cycles, we can
leverage this property to further strengthen the MINLP reformulations of the
congestion models by adding cycle-elimination constraints.
To this end, let~$\mathcal{R}$ denote the set of all simple directed cycles in
the network. Then, for all~$r \in \mathcal{R}$ and~$s \in \origs$, the
associated cycle-elimination constraint is given by
\begin{equation}
  \label{eq:cycle-constraint}
  \sum_{a \in r} y_a^s \leq \Abs{r} - 1.
\end{equation}
Here, $y_a^s$ is the binary variable indicating whether arc~$a \in r$ is
used by any commodity originating in~$s \in \origs$ and~$\Abs{r}$ is the
number of arcs in (or the length of) the cycle~$r \in \mathcal{R}$.
Constraint~\eqref{eq:cycle-constraint} ensures that at
most~$\Abs{r} - 1$ arcs can carry positive flow, thereby preventing any
directed cycles in an optimal solution.
To reduce the number of cycle-elimination constraints added to the model, we
exploit Corollary~\ref{cor:fix-vars-outside-blocks-path} so that
Constraint~\eqref{eq:cycle-constraint} only needs to be imposed for cycles
contained within the blocks associated with a source node~$s$.

\begin{proposition}
  Suppose that Assumptions~\ref{as:graph}--\ref{as:cont-cost} hold.
  Let~$s \in \origs$ be given arbitrarily and let the set~$\mathcal{B}_s$ be
  the set of blocks associated with~$s$ as defined in
  Corollary~\ref{cor:fix-vars-outside-blocks-path}.
  For each~$B \in \mathcal{B}_s$, let~$\nodes_B$ denote the set of nodes in
  block~$B$. Further, let
  \begin{equation}
    \label{eq:cycle-set}
    \mathcal{R}_s \define \Defset{r \in \mathcal{R}}{\nodes(r) \subseteq
      \nodes_B \text{ for some } B \in \mathcal{B}_s},
  \end{equation}
  where~$\nodes(r)$ denotes the set of nodes visited by
  cycle~$r \in \mathcal{R}$. Then, the inequalities
  \begin{equation}
    \label{eq:cycle-constraint-partitioned}
    \sum_{a \in r} y_a^s \leq \Abs{r} - 1,
    \quad r \in \mathcal{R}_s
  \end{equation}
  are valid for Problems~\eqref{eq:minlp-ref:ue} and~\eqref{eq:minlp-ref:so}.
\end{proposition}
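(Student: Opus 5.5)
Validity here means that no optimal solution of Problem~\eqref{eq:minlp-ref:ue} or~\eqref{eq:minlp-ref:so} is cut off. More precisely, the plan is to show that for every optimal solution $(d,\flow,\comflow,\tau,y)$ one can assume, without loss of generality, that $y^s_a=0$ whenever $\comflow^s_a=0$. Under this normalization the inequalities~\eqref{eq:cycle-constraint-partitioned} hold automatically. Such an adjustment is always feasible. Setting $y^s_a=0$ only relaxes~\eqref{eq:user-equi:big-m-ref:cost} (resp.~\eqref{eq:minlp-ref:so:big-m-cost}) to its upper bound $M^s_a$, which is valid by the choice of big-$M$s in Section~\ref{sec:big-m}. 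Meanwhile $\comflow^s_a\le N^s_a y^s_a$ stays satisfied because $\comflow^s_a=0$. The objective $\latency(\flow)$ does not depend on $y$. So among optimal solutions we may pick one whose $y$ is exactly the support indicator of $\comflow$.

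Next, fix $s\in\origs$ and a cycle $r\in\mathcal{R}_s$ (in fact any $r\in\mathcal{R}$). Suppose, for contradiction, that $\sum_{a\in r}y^s_a=\Abs{r}$. With the normalization above, this means $\comflow^s_a>0$ for every arc $a\in r$, so the commodity flow $\comflow^s$ carries positive flow around the directed cycle $r$. The $(d,\flow,\comflow)$ part of an optimal solution satisfies the KKT system~\eqref{eq:user-equi:kkt} (resp.~\eqref{eq:system-opt:kkt}), so it is an optimal UE (resp.\ SO) flow for demand $d$. By the fact recalled before Proposition~\ref{prop:partitions-paths}, with cost functions positive and non-decreasing under Assumption~\ref{as:cont-cost} (Theorem~3.8 in \textcite{Ahuja_et_al:1993} and the aggregated version in \textcite{Boyles_et_al:2025}), such flows carry no positive flow on directed cycles. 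This is the desired contradiction.

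A cleaner route uses the KKT system directly. Summing~\eqref{eq:user-equi:big-m-ref:cost} with $y^s_a=1$ (so that equality holds) over $a=(i,j)\in r$ makes the $\tau$ terms telescope to zero. This leaves $\sum_{a\in r}c_a(\flow_a)=0$, which contradicts $c_a>0$. The SO case is identical, since $c_a(\flow_a)+\flow_ac_a'(\flow_a)>0$ because $\flow_a\ge0$ and $c_a'\ge0$. This argument shows that every feasible point satisfies the inequality for all $r\in\mathcal{R}$, and it even avoids the normalization step for the cost side. Finally, the restriction to $\mathcal{R}_s$ is harmless: it merely drops some of the valid inequalities. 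Corollary~\ref{cor:fix-vars-outside-blocks-path} explains why the omitted ones are redundant, since they involve arcs with $y^s_a=0$ or cycles outside the relevant blocks (every simple cycle lies in a single block).

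The main subtlety is the first step, namely the relationship between $y^s_a=1$ and positive flow. A feasible point may have $y^s_a=1$ with $\comflow^s_a=0$. The telescoping argument sidesteps this issue, because $y^s_a=1$ forces equality in the reduced-cost constraint regardless of the flow value. I would therefore present the telescoping argument as the main proof and mention the acyclicity of optimal flows as intuition.
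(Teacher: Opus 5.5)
Your telescoping argument is exactly the paper's proof. If $y^s_a=1$ on every arc of $r$, the big-$M$ constraints force $c_a(\flow_a)+\tau^s_i-\tau^s_j=0$ on those arcs, summing over the cycle cancels the $\tau$ terms, and $\sum_{a\in r}c_a(\flow_a)=0$ contradicts positivity of the costs, so the inequality holds at every feasible point. The SO case is left as ``analogous'' in the paper, and your observation that $c_a(\flow_a)+\flow_a c_a'(\flow_a)>0$ is what justifies it; the normalization and acyclicity detour is unnecessary but harmless.
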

\begin{proof}
  We prove the claim for the UE as the SO case can be shown analogously.
  Let~$(d,\flow,\comflow,\tau,y)$ be feasible for
  Problem~\eqref{eq:minlp-ref:ue}. We prove the claim by contradiction.
  To this end, suppose that there exist~$s \in \origs$ and~$r \in
  \mathcal{R}_s$ with
  \begin{equation*}
    \sum_{a \in r} y_a^s > \Abs{r} - 1.
  \end{equation*}
  Because~$y^s \in \set{0,1}^{\Abs{\arcs}}$, this implies~$y_a^s = 1$ for
  all~$a \in r$. By Constraints~\eqref{eq:user-equi:big-m-ref:cost}, we then
  have~$c_a(\flow_a) + \tau_i^s - \tau_j^s = 0$ for all~$a = (i,j) \in r$.
  Summing these equalities over all arcs~$a = (i,j) \in r$ yields
  \begin{equation*}
    \sum_{a \in r} c_a(\flow_a) = 0,
  \end{equation*}
  which contradicts Assumption~\ref{as:cont-cost}. This concludes the proof.
\end{proof}


\section{Congestion Measures, Travel Cost Functions\\and
Demand Uncertainty Modeling}
\label{sec:modeling-details}

In the previous sections, we have presented a general framework for modeling and
solving the congestion model under demand uncertainty, which is applicable to a
wide range of congestion measures, travel cost functions, and uncertainty sets.
In this section, we now specify the modeling choices that we will adopt in our
computational study in Sections~\ref{sec:computational-results}
and~\ref{sec:results}.
Specifically, we consider three congestion measures
(Section~\ref{sec:latency-functions}), three travel cost functions
(Section~\ref{sec:travel-costs}), and three uncertainty sets
(Section~\ref{sec:uncertainty-set}), which are commonly used
modeling choices in the transportation literature. We also discuss practical
implementation details if relevant.

Throughout this section, we use $0 < \capacity_a < \infty$ to denote the
practical capacity of an arc~$a \in \arcs$, which represents the maximum flow
that an arc can accommodate under ideal operating conditions without causing
congestion; see, e.g., Section~1.5 in \textcite{Patriksson:2015} for further
details.

\subsection{Latency Functions}
\label{sec:latency-functions}

We measure congestion using one of the following three latency functions.
We emphasize that all three considered congestion measures are continuous in
the flows~$\flow$, i.e., they satisfy Assumption~\ref{as:latency-fun}.

\subsubsection*{Maximum Utilization Function}
We measure congestion by focusing on the most heavily used arc in the
network. To this end, we consider the function
\begin{equation}
  \label{eq:max-ratio-fun}
  \latency(\flow) = \max_{a \in \arcs} \, \frac{\flow_a}{\capacity_a}.
\end{equation}
In particular, \eqref{eq:max-ratio-fun} can be used to identify the network's
most severe bottleneck.

\subsubsection*{Total Utilization Function}
Whereas the previous function focuses on bottlenecks in the network, we
also study the overall traffic load in the network by considering the linear
function
\begin{equation}
  \label{eq:sum-ratio-fun}
  \latency(\flow) = \sum_{a \in \arcs} \frac{\flow_a}{\capacity_a}.
\end{equation}

\subsubsection*{Bureau of Public Roads (BPR) Function}
Another congestion measure that is widely used in the literature is the
BPR function \parencite{Bureau-Public-Roads:1964}, which is given by
\begin{equation}
  \label{eq:bpr-function-latency}
  \latency(\flow) = \sum_{a \in \arcs} \bpr(\flow_a)
\end{equation}
with
\begin{equation}
  \label{eq:bpr-function}
  \bpr(\flow_a) = c^{\fix}_a \left( 1 + \alpha {\left(
        \frac{\flow_a}{\capacity_a} \right)}^\beta \right),
  \quad a \in \arcs.
\end{equation}
Here, $c^{\fix}_a \in \R_{> 0}$ is the free-flow time of arc~$a \in
\arcs$, i.e., the travel time experienced if the arc is used under ideal
operating conditions.
Moreover, the parameters~\mbox{$\alpha, \beta \in \R_{> 0}$} can be adjusted to
account for how quickly travel times increase as flows approach the practical
capacity of an arc. Commonly used values are~$\alpha=0.15$ and~$\beta=4$, which
reflect typical empirical observations of congestion effects and will also be
used in our computational study in Sections~\ref{sec:computational-results}
and~\ref{sec:results}.
Note that the BPR function~\eqref{eq:bpr-function} is convex for~$\flow \geq 0$
in this case.

We point out that the maximum utilization function~\eqref{eq:max-ratio-fun}
and degree-$4$ polynomials such as those in the BPR function are not directly
supported by general-purpose solvers such as \textsf{Gurobi}. To handle
these cases, we thus apply the following reformulation techniques.

\subsubsection{Reformulation of the Maximum Utilization Function}

When measuring congestion using the maximum utilization function, we consider
the problem
\begin{equation}
  \label{eq:congestion-w-max-ratio}
  \max_{d,\flow} \ \max_{a \in \arcs} \,
  \frac{\flow_a}{\capacity_a}
  \quad \st \quad d \in \uncertaintyset,\, \flow \in F(d).
\end{equation}
Because the outer maximization corresponds to an optimization problem and the
inner maximization is taken over a discrete set of arcs, the resulting
max–max structure cannot be reformulated using a standard epigraph
reformulation.
Nevertheless, an equivalent MINLP reformulation of
Problem~\eqref{eq:congestion-w-max-ratio} can be obtained by introducing
auxiliary binary variables~\mbox{$w_a \in \set{0,1}$} for all~$a \in \arcs$ as
well as a sufficiently large constant~$M \in \R_{>0}$.
Specifically, Problem~\eqref{eq:congestion-w-max-ratio} is equivalent to
\begin{subequations}
  \label{eq:epi-ref}
  \begin{align}
    \max_{d,\flow,\comflow,\lambda} \quad
    & \lambda
    \\
    \st \quad
    & \lambda \leq \frac{f_a}{\capacity_a} + M(1 - w_a), \quad a \in \arcs,
      \label{eq:epi-ref:epi}
    \\ 
    & \sum_{a \in \arcs} w_a = 1, \\
    & w_a \in \Set{0,1}, \quad a \in \arcs, \\
    & d \in \uncertaintyset,\, (\flow,\comflow) \in F(d).
  \end{align}
\end{subequations}
In Problem~\eqref{eq:epi-ref}, exactly one of the~$w$ variables takes the
value~$1$. If~$w_a = 1$ holds for some arc~$a \in \arcs$,
the corresponding constraint in~\eqref{eq:epi-ref:epi}
enforces~$\lambda \leq \flow_a/\capacity_a$.
Because the objective is to maximize~$\lambda$, this inequality is satisfied
with equality for the arc attaining the maximum utilization ratio.
All other~$w$ variables take the value~$0$.
Nevertheless, to ensure the correctness of the formulation,
it is essential to select a sufficiently large constant~$M$.
A valid choice is
\begin{equation}
  M \define \max_{a \in \arcs} \Set{\frac{\bar \flow_a}{\capacity_a}},
\end{equation}
where~$\bar \flow_a$ denotes an upper bound on the arc flows; see
Proposition~\ref{prop:tightened-flow-bounds}.

\subsubsection{Handling Degree-$4$ Polynomials}
\label{sec:handling-degree-4}

Nowadays, general-purpose MILP solvers such as
\textsf{Gurobi} can handle convex quadratic programs (MIQPs) but
higher-degree polynomials are not directly supported.
To address the congestion model with the BPR latency function, we thus
introduce auxiliary
variables \mbox{$\phi \in \R^{\Abs{\arcs}}_{\geq 0}$} modeling
\begin{equation*}
  \phi_a = f_a \cdot f_a, \quad a \in \arcs,
\end{equation*}
so that degree-$4$ polynomials are expressed using only quadratic terms.

\subsection{Travel Cost Functions}
\label{sec:travel-costs}

We also use the BPR function presented in~\eqref{eq:bpr-function-latency} for
the travel costs. In this paper, we focus on settings with~$\alpha = 0.15$
and \mbox{$\beta \in \set{1,2,4}$}.
For~$\beta = 1$, the cost functions are linear; for~$\beta = 2$, they are
quadratic; and for~$\beta = 4$, we obtain the classic BPR function that is
commonly studied in the literature and also used as the latency function in
this paper.
In all three cases, the travel cost functions are convex, non-decreasing, and
continuously differentiable in~$f \in \R^{\Abs{\arcs}}_{\geq 0}$, i.e., they
satisfy Assumption~\ref{as:cont-cost}.
To handle the degree-$4$ polynomials in the travel cost functions
with~$\beta = 4$, we apply the reformulation described in
Section~\ref{sec:handling-degree-4}.

\begin{remark}
  \label{rem:structural-properties}
  In Problem~\eqref{eq:minlp-ref:so}, the derivatives~$c'_a(\flow_a)$ are
  polynomials of one degree lower than the original travel cost
  functions~$c_a(\flow_a)$. Hence, the terms~$\flow_a c'_a(\flow_a)$,
  \mbox{$a \in \arcs$}, in~\eqref{eq:minlp-ref:so:big-m-cost} are polynomials
  of the same degree as~$c_a(\flow_a)$.
  As a result, considering the SO instead of the UE does not affect the
  structural properties of the MINLP reformulation of the congestion model.
\end{remark}

Given the above reformulation techniques and modeling choices, we now
summarize the problem variants arising from different combinations of the
latency function and travel cost functions in Table~\ref{tab:problem-types}.
Note that the MINLP reformulations of the congestion model under the user
equilibrium~\eqref{eq:minlp-ref:ue} and the system
optimum~\eqref{eq:minlp-ref:so} have the same structural properties, as
discussed in Remark~\ref{rem:structural-properties}.

\begin{table}
  \centering
  \caption{Properties of the MINLP reformulations of the congestion model
    depending on the choice of~$\beta \in \set{1,2,4}$ in the BPR travel cost
    functions and the choice of the latency function~$\latency(\flow)$, i.e.,
    maximum utilization~\eqref{eq:max-ratio-fun}, total
    utilization~\eqref{eq:sum-ratio-fun},
    or BPR~\eqref{eq:bpr-function-latency}.}
  \begin{tabular}{clcc}
    \toprule
    $\beta$ & $\latency(\flow)$ & convex feasible set & problem type \\
    \midrule
    $1$ & \eqref{eq:max-ratio-fun}, \eqref{eq:sum-ratio-fun} & \check & MILP  \\
    $1$ & \eqref{eq:bpr-function-latency} & \check & MIQP \\
    $2$ & \eqref{eq:max-ratio-fun}, \eqref{eq:sum-ratio-fun},
          \eqref{eq:bpr-function-latency} & \cross & MIQP \\
    $4$ & \eqref{eq:max-ratio-fun}, \eqref{eq:sum-ratio-fun},
          \eqref{eq:bpr-function-latency} & \cross & MIQP \\
    \bottomrule
  \end{tabular}
  \label{tab:problem-types}
\end{table}

Finally, let us briefly comment on the monotonicity of the latency function
with respect to the travel demand. We note that increasing the travel demand of
a single commodity does not necessarily lead to higher congestion levels, even
if the travel cost functions are linear. The following example illustrates that
higher travel demand may in fact reduce congestion levels due to route
reallocation effects. For related monotonicity phenomena and paradoxes in
congestion games, we also refer to \textcite{Cominetti_et_al:2024}.

\begin{example}
  \label{ex:paradox}
  We consider the network shown in Figure~\ref{fig:paradox}, which is taken
  from Example~3 in \textcite{Cominetti_et_al:2024}.
  \begin{figure}
  \centering
  \begin{tikzpicture}[
    transform shape, thick,
    every node/.style={font=\small},
    vertex/.style={circle, draw=black, text=black, minimum
      size=0.45cm, font=\footnotesize\sffamily, inner sep=1pt},
    edge/.style={draw=black, line width=0.8pt}
    ]
    
    \node[vertex] (1) at (0, 0) {$1$};
    \node[vertex] (2) at (2, 2) {$2$};
    \node[vertex] (3) at (4, 0) {$3$};
    
    \draw[->,thick] (1) -- (2) node[pos=0.5,above left] {$\flow_{(1,2)}$};
    \draw[->,thick] (1) -- (3) node[pos=0.5,below] {$\flow_{(1,3)} + 90$};
    \draw[->,thick] (2) -- (3) node[pos=0.5,above right] {$\flow_{(2,3)}$};
  \end{tikzpicture}
  \caption{The network considered in Example~\ref{ex:paradox}.
    Arc labels represent the travel cost functions associated with each arc.}%
  \label{fig:paradox}%
\end{figure}%
  We consider three OD pairs given by~$(1,2)$,
  $(2,3)$, and~$(1,3)$ with demand~$1$, $100$, and~$20$, respectively.
  The resulting equilibrium flows under the UE are
  $\flow_{(1,2)} = 4$, $\flow_{(1,3)} = 17$, and $\flow_{(2,3)} = 103$.
  Let us now assume that all arcs have a practical capacity of~$1$, i.e.,
  $\capacity_a = 1$ for all $a \in \arcs = \set{(1,2),(2,3),(1,3)}$.
  Considering the maximum utilization function~\eqref{eq:max-ratio-fun} as the
  congestion measure, the worst-case congestion is attained on arc~$(2,3)$,
  namely~$\flow_{(2,3)}/\capacity_{(2,3)} = 103$.
  As observed in \textcite{Cominetti_et_al:2024}, if we now increase the travel
  demand of commodity~$(1,2)$ from~$1$ to~$4$, some of its flow is
  reallocated. The new equilibrium flows are given by
  $\flow_{(1,2)} = 6$, $\flow_{(1,3)} = 18$, and $\flow_{(2,3)} = 102$, which
  leads to the worst-case congestion level of~$102 < 103$.
  Hence, increasing the travel demand of a commodity can in fact lead
  to reduced congestion.
\end{example}

The previous observation implies that the demand realization causing the
worst-case congestion in our framework does not necessarily lie on the boundary
of the uncertainty set. Due to route reallocation effects,
intermediate demand realizations can produce higher congestion, which contrasts
with classic robust optimization settings in which worst cases typically occur
at the boundary of the uncertainty~set.

\subsection{Uncertainty Set}
\label{sec:uncertainty-set}

We assume that the travel demand~$d = (d_k)_{k \in \commodities}$ is
uncertain. In this paper, we consider three approaches to model this
uncertainty: budgeted uncertainty, ellipsoidal uncertainty, and the hose model.

\subsubsection{Budgeted Uncertainty}
\label{sec:budget-uncertainty}

As it is unlikely that all demands simultaneously realize in a worst-case
manner, a common approach is to adopt a budgeted (or~$\Gamma$-robust)
uncertainty modeling
\parencite{Bertsimas_Sim:2003,Bertsimas_Sim:2004,Sim:2004}. This model has
been widely used in transportation contexts, including
network design \parencite{Mattia:2019,Mattia_Poss:2018}, traffic assignment
\parencite{Ordonez_Stier-Moses:2007,Ordonez_Stier-Moses:2010,Ito:2011}, and
network pricing \parencite{Beck_et_al:2024}.

In the budgeted uncertainty model, each commodity~$k$ has a nominal
demand \mbox{$\bar{d}_k \in \R_{\geq 0}$}, which corresponds to the demand
value provided in the original instance data, and a maximum
deviation~$\delta_k \in \R_{\geq 0}$ from the nominal value so
that~$d_k \in [\bar{d}_k - \delta_k, \bar{d}_k + \delta_k]$ holds for
all~$k \in \commodities$.
Each commodity may deviate by an arbitrary fraction of
its maximum deviation~$\delta_k$ from the nominal value as long as the
 budget~$\Gamma \in \set{0,\ldots,\Abs{\commodities}}$ for the aggregated
 percentage deviation is not exceeded.
Overall, this can be modeled using the uncertainty set
\begin{equation}
\label{eq:budget-uncertainty}
\uncertaintyset_{\text{budget}} = \Defset{d \in \R^{\Abs{\commodities}}}{%
  d_k = \bar d_k + \delta_k z_k,\, z_k \in [-1,1],\, k \in \commodities,\,
  \sum_{k \in \commodities} \Abs{z_k} \leq \Gamma}.
\end{equation}
Here, the parameter~$\Gamma$ controls the size of the uncertainty set.
For~$\Gamma = 0$, no uncertainty is taken into account, i.e., we consider the
deterministic case, whereas for~$\Gamma = \Abs{\commodities}$, all commodities
may simultaneously experience their maximum deviation.
Note that~$\uncertaintyset_{\text{budget}}$ is non-empty, convex and
compact.
By assuming~$\delta_k \leq \bar d_k$ for all $k \in \commodities$,
we ensure~$d \in \R^{\Abs{\commodities}}_{\geq 0}$, i.e.,
Assumption~\ref{as:uncertainty-set} is satisfied.

\subsubsection{Ellipsoidal Uncertainty}
\label{sec:ellipsoid-uncertainty}

Using an ellipsoidal uncertainty modeling, we replace the cardinality
constraint in the budgeted uncertainty set with a quadratic constraint on the
vector of deviations, also called perturbation vector in the following. The
ellipsoidal uncertainty set is thus given by
\begin{equation}
\label{eq:ellipsoid-uncertainty}
\uncertaintyset_{\text{ellipsoid}} = \Defset{d \in \R^{\Abs{\commodities}}}{%
  d_k = \bar d_k + \delta_k z_k,\, k \in \commodities,\, \| z \|_2 \leq \rho},
\end{equation}
where~$\rho > 0$ is the radius of the ellipsoid
and $z = (z_k)_{k \in \commodities}$  is the perturbation vector.
To have a fair comparison between the ellipsoidal and the budgeted uncertainty
models, we select~$\rho$ such that the uncertainty
sets~$\uncertaintyset_{\text{budget}}$
and~$\uncertaintyset_{\text{ellipsoid}}$ cover a comparable range of demand
realizations. As shown in \textcite{Li_et_al:2011}, 
there exists a geometric relationship between these two sets, which depends on
the choice of the parameters~$\rho$ and~$\Gamma$.
In their analysis, they describe two cases:
\begin{enumerate}[label=(\roman*)]
\item If~$\rho = \Gamma$, the ellipsoid is the smallest one containing the
  polyhedron of the budgeted uncertainty model.
\item If~$\rho = \Gamma / \sqrt{\Abs{\commodities}}$, the ellipsoid is the
  largest one contained in the polyhedron of the budgeted uncertainty model.
\end{enumerate}

In this paper, we introduce a third intermediate case with~$\rho =
\sqrt{\Gamma}$, which yields uncertainty sets of comparable size.
In Figure~\ref{fig:uncertainty-sets-comparison}, we illustrate all three
aforementioned cases.
\begin{figure}
  \centering
  \includegraphics[width=\textwidth]{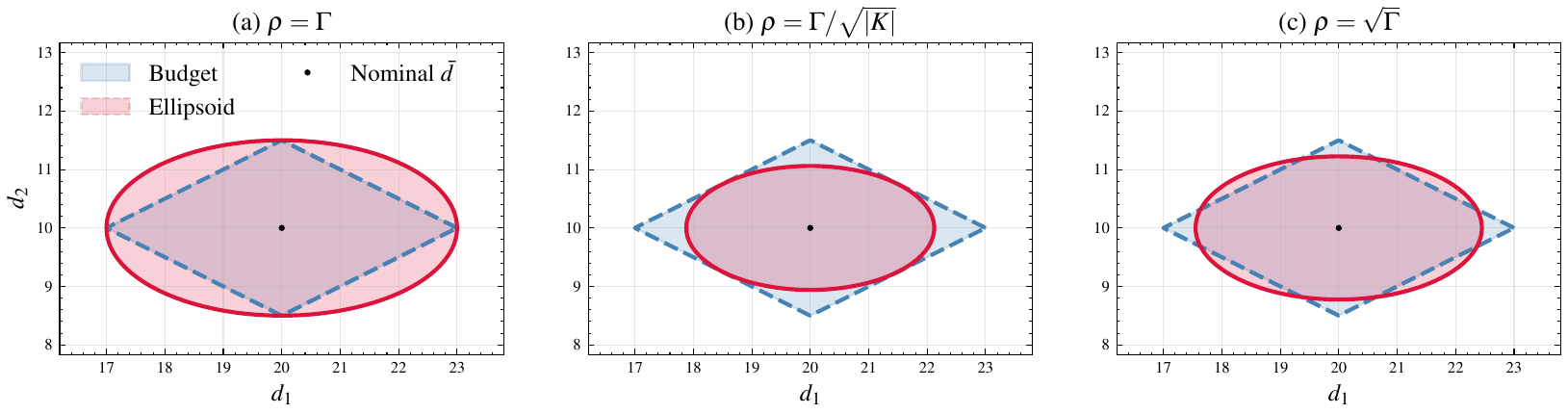}
  \caption{Comparison of budgeted and ellipsoidal uncertainty sets.}
  \label{fig:uncertainty-sets-comparison}
\end{figure}
As before, we assume that~$\delta_k \leq \bar d_k$ holds for
all~$k \in \commodities$ so that the set~$\uncertaintyset_{\text{ellipsoid}}$
satisfies Assumption~\ref{as:uncertainty-set}.

\subsubsection{Hose Model}
\label{sec:hose-uncertainty}

The hose model, originally introduced in the telecommunications network design
literature \parencite{hose,hose2}, aggregates demand uncertainty at the node
level rather than at the commodity level. This model has been applied in
various network design contexts; see, e.g.,
\textcite{Altin:2007,oriolo,mattiarnl}.
For each node~$v \in \nodes$, a bound~$b_v$ for the total traffic generated by
that node is specified. In this paper, we consider the so-called symmetric hose
uncertainty set given by
\begin{equation}
\label{eq:hose-uncertainty}
\uncertaintyset_{\text{hose}} = \Defset{d \in \R^{\Abs{\commodities}}_{\geq 0}}{%
  \sum_{\Defset{k \in \commodities}{s_k = v \text{ or } t_k = v}}
  d_k \leq b_v,\, v \in \nodes},
\end{equation}
where~$s_k$ and~$t_k$ denote the source and the destination node of
commodity~$k$, respectively.
%
To ensure a fair comparison between the hose model and the budgeted and
ellipsoidal uncertainty sets, we determine the node bounds as follows
\begin{equation*}
b_v= \sum_{k \in \commodities_v} \bar{d}_k + \sum_{j=1}^{\min \set{\Gamma,\,
    \Abs{\commodities_v}}} \delta_k^{(j)},
\end{equation*}
where~$\commodities_v$ is the set of commodities with source or destination in
node~$v \in \nodes$ and~$\delta_k^{(j)}$ are the deviations of the commodities
in~$\commodities_v$ sorted in non-increasing order.
Hence, we consider the~$\Gamma$ largest deviations among the commodities
in~$\commodities_v$.
By construction, we have $\uncertaintyset_{\text{budget}} \subseteq
\uncertaintyset_{\text{hose}}$ and~$\uncertaintyset_{\text{hose}}$ satisfies
Assumption~\ref{as:uncertainty-set}.


\section{Computational Setup and Experimental Design}
\label{sec:computational-results}

In this section, we discuss the setup and design of our computational
study. In Section~\ref{sec:implementation}, we elaborate on the
hardware, the software, and the solver used in our computational study.
Afterward, in Section~\ref{sec:instances}, we describe our test instances.

\subsection{Computational Setup}
\label{sec:implementation}

All experiments were performed on a 64-bit Linux system with an Intel Core
i9-13900K (\SI{3}{\giga\hertz}) and \SI{128}{\giga\byte} RAM using 4 threads.
The congestion models were implemented in \textsf{Python}~3.10.15 and solved
with \textsf{Gurobi} 12.0.1.
The time limit for each instance was set to \SI{2}{\hour}. Moreover,
we set the parameter \textsf{MIPGap} to $10^{-3}$ and disabled
presolve, as preliminary experiments indicated numerical
instabilities when using \textsf{Gurobi}'s default settings.
All other parameters were left at their default settings.
For each instance, we first solve the deterministic version of the problem and
use the solution to this problem as a warm start for solving the model under
demand uncertainty, which considerably improved runtime in preliminary
computational tests.

\subsection{Test Instances}
\label{sec:instances}

We consider instances of the Sioux Falls network
\parencite{LeBlanc_et_al:1975}, whose data is publicly available at
\url{https://github.com/bstabler/TransportationNetworks}, and instances from
the \textsf{SNDlib} \parencite{Orlowski_et_al:2010,SNDlib10}, which can be
accessed at \url{https://sndlib.put.poznan.pl}.

\subsubsection{Sioux Falls Instances}

We consider subnetworks of the Sioux Falls network obtained as
subgraphs induced by \SI{50}{\percent} or \SI{75}{\percent} of the
nodes with a varying number of arcs.
Specifically, we analyze five small instances with 12~nodes and five
medium instances with 18~nodes, which are illustrated in
Figure~\ref{fig:siouxfall_networks}.
\begin{figure}
  \centering
  \includegraphics[width=0.9\textwidth]{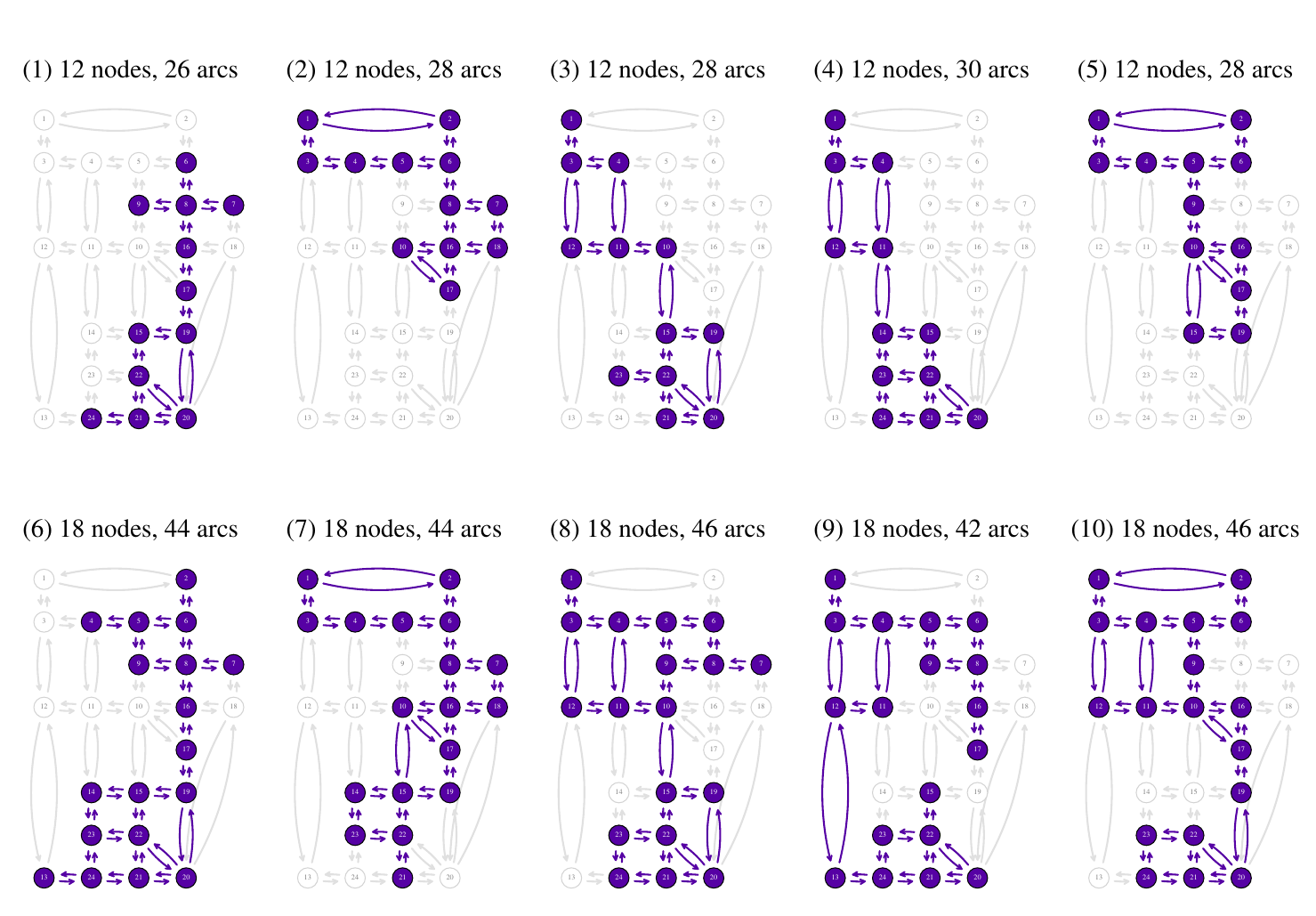}
  \caption{The subnetworks of the Sioux Falls network considered in the
    computational study. Nodes and arcs included in the subnetworks are shown
    in purple.}
  \label{fig:siouxfall_networks}
\end{figure}
For each subnetwork, we vary the number of
commodities~$\numcommodities \in \set{20, 30, 50, 75, 100}$ and randomly select
the desired number of commodities from the set of all origin-destination pairs.
Because the original Sioux Falls instance does not account for demand
uncertainty, we do the following.
For the $\Gamma$-robust formulation, the nominal demand~$\bar{d}_k$
for each commodity~$k \in \commodities$ is set to the demand value~$d_k$
reported in the original instance data. We then
set~$\delta_k = 0.25\bar{d}_k$ for the maximum deviation.
The parameters of the ellipsoidal and hose uncertainty sets are derived
from~$\bar{d}_k$ and~$\delta_k$ as described in
Section~\ref{sec:uncertainty-set}.
All values are scaled by a common factor to avoid numerical instabilities.
Finally, we note that all generated subnetworks are strongly connected, whereas
their underlying undirected graphs are not biconnected.
This structure allows us to assess the effectiveness of the enhancement
techniques introduced in Section~\ref{sec:tightening}.

\subsubsection{\textsf{SNDlib} Instances}

As the \textsf{SNDlib} instances have originally been created for
telecommunication network design, they do not include all parameters required
for traffic assignment problems, such as arc capacities and free-flow travel
times. Therefore, we proceed as follows.
From each \textsf{SNDlib} instance, we take the set of nodes~$\nodes$
with their geographical coordinates, the set of links, and the traffic demands.
We then parse the graph topology from the \textsf{SNDlib} native format, 
duplicating each undirected link to create a directed bidirectional graph. 
Arc capacities are set proportionally to the total
demand~$D = \sum_{k \in \commodities} d_k$ across all commodities and we
introduce heterogeneity through a random perturbation. More specifically, we
consider~$\capacity_a = 0.1D\xi_a$ for all~$a \in \arcs$,
where~$\xi_a \sim U[0.1, 2.0]$ is a uniformly distributed random value that
introduces variability across arcs.
Finally, to set the free-flow travel time~$c^{\fix}_a$ for each arc~$a = (i,j)$, 
we consider the Euclidean distance~$d_{ij}$ between its endpoints, which is
computed using the geographical coordinates provided in the \textsf{SNDlib}
data, and normalize it using the maximum distance~$d_{\max}$ in the network.
The free-flow time is then obtained as
\begin{equation*}
  c^{\fix}_a = c_{\min} + \frac{d_{ij}}{d_{\max}} \left( c_{\max} - c_{\min}
  \right),
  \quad a \in \arcs,
\end{equation*}
with~$c_{\min} = 1$ and~$c_{\max} = 50$.
Hence, longer arcs have higher free-flow costs, whereas shorter arcs have
lower costs.
As with the Sioux Falls instances, we vary the number of commodities
for each network. We consider five cases by selecting \SI{10}{\percent},
\SI{25}{\percent}, \SI{50}{\percent}, \SI{75}{\percent}, and \SI{100}{\percent}
of the number of commodities~$\numcommodities$ of the original instance.
Commodities are randomly selected from the set of all OD pairs and
the demand parameters are determined following the same procedure as for the
Sioux Falls instances.
We summarize the main characteristics of the considered \textsf{SNDlib}
instances in Table~\ref{tab:sndlib_instances} and illustrate their
network topologies in Figure~\ref{fig:sndlib_networks}.

\begin{table}
  \centering
  \caption{The number of nodes, arcs, and commodities of the considered
    \textsf{SNDlib} instances.}
  \label{tab:sndlib_instances}
  \begin{tabular}{lrrrrrrrr}
    \toprule
    Instance & $\Abs{\nodes}$ & $\Abs{\arcs}$
    & $\lceil\SI{10}{\percent}\numcommodities\rceil$
    & $\lceil\SI{25}{\percent}\numcommodities\rceil$
    & $\lceil\SI{50}{\percent}\numcommodities\rceil$
    & $\lceil\SI{75}{\percent}\numcommodities\rceil$
    & $\numcommodities$ \\
    \midrule
    \textsf{abilene}  & 12 & 30 & 13 & 33 & 66  & 99  & 132 \\
    \textsf{atlanta}  & 15 & 44 & 21 & 52 & 105 & 158 & 210 \\
    \textsf{nobel-us} & 28 & 42 & 9  & 22 & 45  & 68  & 91 \\
    \textsf{pdh}      & 11 & 68 & 2  & 6  & 12  & 18 & 24 \\
    \bottomrule
  \end{tabular}
\end{table}

\begin{figure}
  \centering
  \includegraphics[width=0.9\textwidth]{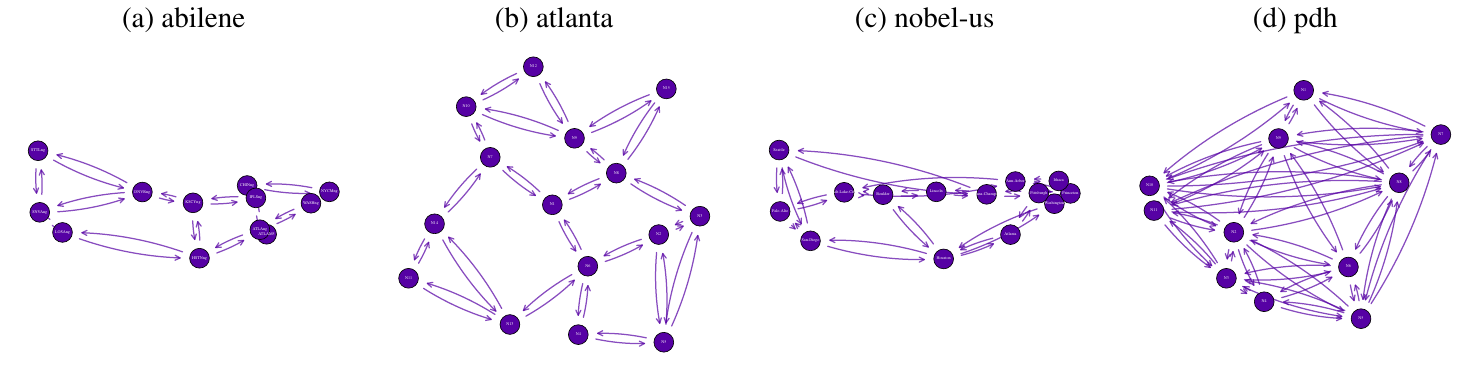}
  \caption{The network topologies of the considered \textsf{SNDlib} instances.}
  \label{fig:sndlib_networks}
\end{figure}

\section{Discussion of the Computational Results}
\label{sec:results}

We now present and discuss the results of our computational study.
Unless stated otherwise, the results refer to the Sioux Falls subnetworks under
budgeted uncertainty using the user equilibrium
formulation~\eqref{eq:minlp-ref:ue} and the enhancement techniques described in
Section~\ref{sec:tightening}.

In Section~\ref{sec:performance-analysis}, we discuss
the computational performance and scalability of our models.
In Section~\ref{sec:sensitivity}, we analyze the sensitivity of optimal
solutions to different choices of travel cost and latency functions.
In Section~\ref{sec:ue-so-comparison}, we compare the UE and
SO formulations in terms of congestion levels and computational
performance, whereas the impact of different uncertainty models is discussed in
Section~\ref{sec:uncertainty-analysis}.
Finally, in Section~\ref{subsubsec:sndlib-results}, we present the results for
the \textsf{SNDlib} instances.

For the ease of presentation, we abbreviate the
maximum utilization and total utilization latency functions as
\textsf{max\_ratio} and \textsf{sum\_ratio}, respectively.
Moreover, we refer to the travel cost functions with~$\beta = 1$,
$\beta = 2$, and $\beta = 4$ as linear, quadratic, and
BPR, respectively.

\subsection{Performance analysis}
\label{sec:performance-analysis}

We first evaluate the effectiveness of the enhancement techniques from
Section~\ref{sec:tightening}, along with the scalability and tractability of
the congestion models across different combinations of cost and congestion
functions.

\subsubsection{Impact of Enhancement Techniques}
\label{sec:enhancement-impact}

In what follows, we use \emph{tightened formulation} to denote the formulation
obtained after applying the enhancement techniques described in
Section~\ref{sec:tightening}, which reduce the model size and strengthen the
formulation by exploiting the network topology and structural properties of
optimal flows. We use the term \emph{standard formulation} to denote the
original formulation without any enhancements.

Table~\ref{tab:enhancement_comparison} reports the impact of the enhancement
techniques on the computation time and the number of solved instances 
for~$\numcommodities \in \set{20, 30, 50, 75, 100}$.
Here, we aggregate the results over the three considered latency functions and
report them by travel cost function, as the travel cost function
has the largest impact on computation time.
The latter is discussed in more detail in Section~\ref{subsubsec:scalability}.
To compare the tightened and standard formulations, we further report the
speed-up ratio in Table~\ref{tab:enhancement_comparison}, defined as the
execution time of the tightened formulation divided by that of the standard
formulation. For a fair comparison, we only present runtime results for
instances solved to optimality under both formulations.

\begin{table}
  \centering
  \caption{The number of instances solved to optimality
    (``\#opt'') and the average execution time (``time'', in
    \si{\second}) for the tightened and standard formulations of the congestion
    model for different values of $\numcommodities$ and travel cost 
    functions. Additionally, the speed-up ratio
    (``speed-up''), the time saved (in \si{\percent}),
    and the difference in the number of solved instances (``$\Delta$\#opt'') are
    shown.}
  \label{tab:enhancement_comparison}
  \ifPreprint
    \resizebox{\textwidth}{!}{%
    \begin{tabular}{clrrrrrrr}
      \toprule
      & & \multicolumn{2}{c}{\textbf{Tightened}}
      & \multicolumn{2}{c}{\textbf{Standard}}
      & \multicolumn{3}{c}{\textbf{Comparison}} \\
      \cmidrule(lr){3-4} \cmidrule(lr){5-6} \cmidrule(lr){7-9}
      $\numcommodities$ & $c(\flow)$ & \#opt & time & \#opt & time & speed-up & time saved & $\Delta$\#opt \\
      \midrule
      $20$ & & \textbf{90} & \textbf{0.08} & \textbf{90} & \textbf{1.77} & \textbf{21.34} & \textbf{95} & \textbf{0} \\
      & {linear} & 30 & 0.03 & 30 & 1.12 & 32.56 & 97 & 0 \\
      & {quadratic} & 30 & 0.06 & 30 & 1.95 & 34.37 & 97 & 0 \\
      & {BPR} & 30 & 0.16 & 30 & 2.25 & 14.22 & 93 & 0 \\
      \midrule
      $30$ & & \textbf{90} & \textbf{0.29} & \textbf{90} & \textbf{9.97} & \textbf{34.53} & \textbf{97} & \textbf{0} \\
      & {linear} & 30 & 0.07 & 30 & 2.95 & 40.55 & 98 & 0 \\
      & {quadratic} & 30 & 0.17 & 30 & 6.68 & 38.85 & 97 & 0 \\
      & {BPR} & 30 & 0.62 & 30 & 20.30 & 32.63 & 97 & 0 \\
      \midrule
      $50$ & & \textbf{90} & \textbf{2.14} & \textbf{89} & \textbf{59.06} & \textbf{27.61} & \textbf{96} & \textbf{1} \\
      & {linear} & 30 & 0.23 & 30 & 4.68 & 20.56 & 95 & 0 \\
      & {quadratic} & 30 & 1.04 & 30 & 17.89 & 17.16 & 94 & 0 \\
      & {BPR} & 30 & 5.25 & 29 & 157.90 & 30.08 & 97 & 1 \\
      \midrule
      $75$ & & \textbf{90} & \textbf{16.56} & \textbf{77} & \textbf{125.97} & \textbf{7.61} & \textbf{87} & \textbf{13} \\
      & {linear} & 30 & 0.96 & 30 & 12.61 & 13.16 & 92 & 0 \\
      & {quadratic} & 30 & 5.14 & 28 & 69.60 & 13.55 & 93 & 2 \\
      & {BPR} & 30 & 58.02 & 19 & 388.03 & 6.69 & 85 & 11 \\
      \midrule
      $100$ & & \textbf{81} & \textbf{27.44} & \textbf{64} & \textbf{438.14} & \textbf{15.96} & \textbf{94} & \textbf{17} \\
      & {linear} & 30 & 4.86 & 30 & 407.82 & 83.84 & 99 & 0 \\
      & {quadratic} & 30 & 26.43 & 26 & 345.65 & 13.08 & 92 & 4 \\
      & {BPR} & 21 & 127.98 & 8 & 911.62 & 7.12 & 86 & 13 \\
      \midrule
      \textbf{Total} & & \textbf{441} & \textbf{7.89} & \textbf{410} & \textbf{106.64} & \textbf{13.51} & \textbf{93} & \textbf{31} \\
      \bottomrule
    \end{tabular}}
\else
  \begin{tabular}{clrrrrrrr}
      \toprule
      & & \multicolumn{2}{c}{\textbf{Tightened}}
      & \multicolumn{2}{c}{\textbf{Standard}}
      & \multicolumn{3}{c}{\textbf{Comparison}} \\
      \cmidrule(lr){3-4} \cmidrule(lr){5-6} \cmidrule(lr){7-9}
      $\numcommodities$ & $c(\flow)$ & \#opt & time & \#opt & time & speed-up & time saved & $\Delta$\#opt \\
      \midrule
      $20$ & & \textbf{90} & \textbf{0.08} & \textbf{90} & \textbf{1.77} & \textbf{21.34} & \textbf{95} & \textbf{0} \\
      & {linear} & 30 & 0.03 & 30 & 1.12 & 32.56 & 97 & 0 \\
      & {quadratic} & 30 & 0.06 & 30 & 1.95 & 34.37 & 97 & 0 \\
      & {BPR} & 30 & 0.16 & 30 & 2.25 & 14.22 & 93 & 0 \\
      \midrule
      $30$ & & \textbf{90} & \textbf{0.29} & \textbf{90} & \textbf{9.97} & \textbf{34.53} & \textbf{97} & \textbf{0} \\
      & {linear} & 30 & 0.07 & 30 & 2.95 & 40.55 & 98 & 0 \\
      & {quadratic} & 30 & 0.17 & 30 & 6.68 & 38.85 & 97 & 0 \\
      & {BPR} & 30 & 0.62 & 30 & 20.30 & 32.63 & 97 & 0 \\
      \midrule
      $50$ & & \textbf{90} & \textbf{2.14} & \textbf{89} & \textbf{59.06} & \textbf{27.61} & \textbf{96} & \textbf{1} \\
      & {linear} & 30 & 0.23 & 30 & 4.68 & 20.56 & 95 & 0 \\
      & {quadratic} & 30 & 1.04 & 30 & 17.89 & 17.16 & 94 & 0 \\
      & {BPR} & 30 & 5.25 & 29 & 157.90 & 30.08 & 97 & 1 \\
      \midrule
      $75$ & & \textbf{90} & \textbf{16.56} & \textbf{77} & \textbf{125.97} & \textbf{7.61} & \textbf{87} & \textbf{13} \\
      & {linear} & 30 & 0.96 & 30 & 12.61 & 13.16 & 92 & 0 \\
      & {quadratic} & 30 & 5.14 & 28 & 69.60 & 13.55 & 93 & 2 \\
      & {BPR} & 30 & 58.02 & 19 & 388.03 & 6.69 & 85 & 11 \\
      \midrule
      $100$ & & \textbf{81} & \textbf{27.44} & \textbf{64} & \textbf{438.14} & \textbf{15.96} & \textbf{94} & \textbf{17} \\
      & {linear} & 30 & 4.86 & 30 & 407.82 & 83.84 & 99 & 0 \\
      & {quadratic} & 30 & 26.43 & 26 & 345.65 & 13.08 & 92 & 4 \\
      & {BPR} & 21 & 127.98 & 8 & 911.62 & 7.12 & 86 & 13 \\
      \midrule
      \textbf{Total} & & \textbf{441} & \textbf{7.89} & \textbf{410} & \textbf{106.64} & \textbf{13.51} & \textbf{93} & \textbf{31} \\
      \bottomrule
  \end{tabular}
  \fi
\end{table}

The results show that, for~$\numcommodities \in \set{20,30}$, all instances are
solved to optimality within the time limit of~\SI{2}{\hour} under both
formulations, with the tightened formulation reducing execution time by more
than \SI{93}{\percent}.
As~$\numcommodities$ increases, the enhancement techniques also affect
solvability. When incorporating our enhancement techniques, the number of
additional instances solved to optimality increases from~$1$
at~$\numcommodities=50$ to~$13$ at~$\numcommodities=75$ to~$17$
at~$\numcommodities=100$.
In summary, the tightened formulation leads to faster execution times and more
instances solved to optimality, especially when considering many
commodities and the BPR travel cost function.

\subsubsection{Scalability Analysis}
\label{subsubsec:scalability}

We now assess the scalability of the congestion models across different
combinations of travel cost functions and congestion measures.
Figure~\ref{fig:scalability} illustrates the results by comparing
execution times with the size of the congestion model (Figure~(a)) and with the
number of commodities (Figure~(b)).
Model size refers to the number of variables multiplied by the number
of constraints.

\begin{figure}
  \centering
  \includegraphics[width=0.85\textwidth]{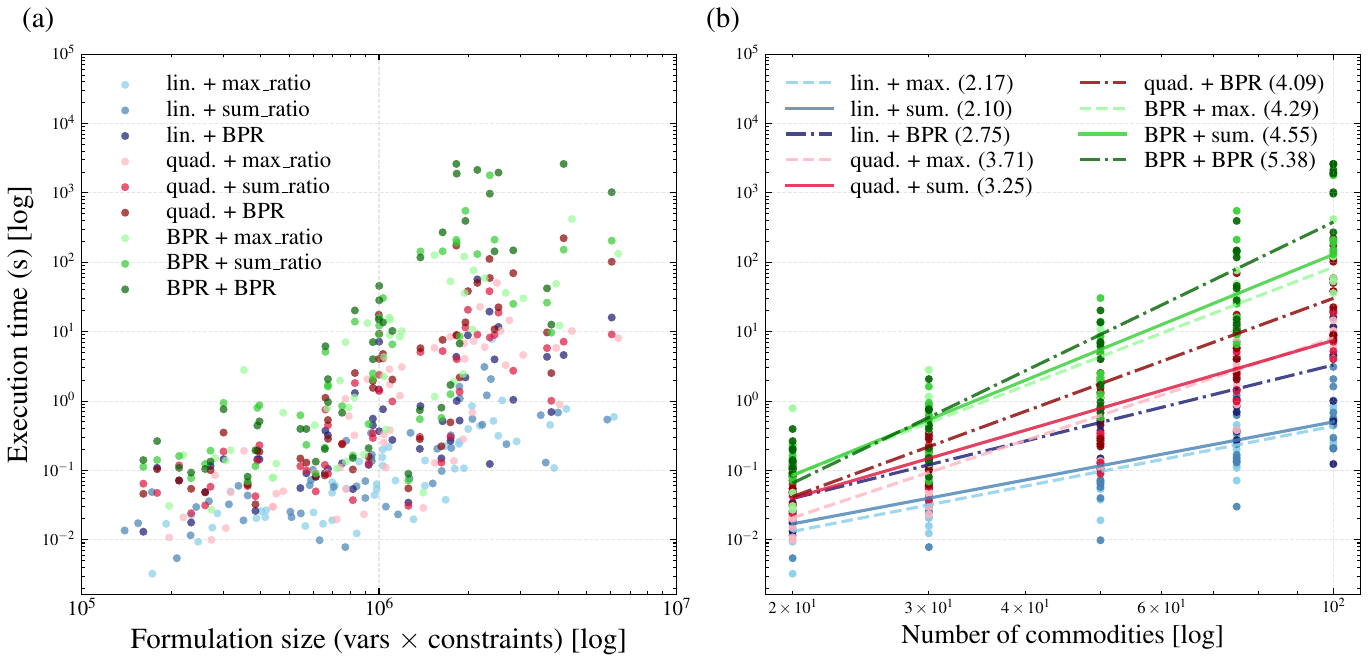}
  \caption{Scalability analysis for different combinations of travel cost and
    latency functions.
    Figure~(a): Execution time versus problem size.
    Figure~(b): Execution time versus number of commodities~$\numcommodities$
    with fitted power-law models~$t = a \cdot \numcommodities^\alpha$
    and~$\alpha$ being the growth exponent reported in brackets in the
    legend.}
\label{fig:scalability}
\end{figure}

Both plots in Figure~\ref{fig:scalability} show a considerable dispersion in
execution times and, for comparable model sizes and identical values
of~$\numcommodities$, execution times vary by several orders of magnitude. This
indicates that the problem size alone does not fully explain the computational
behavior and that structural properties of the formulation play a key role in
scalability. In this context, we observe that the dominant factor
affecting scalability is the choice of the travel cost function.
Instances with {BPR} costs exhibit the highest execution times and the
steepest growth, whereas {linear} costs lead to the lowest
execution times across all combinations.
{Quadratic} costs reveal an intermediate behavior. 
These trends are visible in Figure~\ref{fig:scalability}~(a)
through the vertical separation of color groups and in
Figure~\ref{fig:scalability}~(b) via larger scaling exponents for
{BPR} costs.

In contrast, the impact of the latency function is less
pronounced. For a given cost function, differences among \textsf{sum\_ratio},
\textsf{max\_ratio}, and \textsf{BPR} congestion models
result in smaller variations in execution time, as illustrated by the
overlap of points of the same color but different shades in
Figure~\ref{fig:scalability}~(a) and the proximity 
of curves with the same color but different line styles in
Figure~\ref{fig:scalability}~(b).
Nevertheless, for all three cost functions, we observe that the BPR congestion
measure yields the highest execution times, followed by \textsf{max\_ratio},
whereas \textsf{sum\_ratio} provides the fastest solution times.
Overall, these results indicates that the computational scalability is
primarily determined by the travel cost function, whereas the latency function
plays a secondary, yet still noticeable, role in execution times.

\subsubsection{Performance Across Function Combinations}
\label{subsubsec:performance-combinations}

As observed in Section~\ref{subsubsec:scalability}, the choice of the travel
cost function~$c(\flow)$ has a stronger impact on the solution time of
the congestion models than the choice of latency function. To quantify
these differences, we now aggregate performance metrics across all instances
solved to optimality for each combination of travel cost and latency function.
In Table~\ref{tab:performance_metrics}, we summarize the average execution
times and success rates for each combination. Here and in what follows,
success rate refers to the percentage of instances solved to optimality
out of all considered instances.
The results confirm that the travel cost function is the primary factor
affecting computational performance.
Although latency functions also influence performance, there
is no consistent trend that holds across all configurations.
For example, with {linear} or {quadratic} costs, all
instances are solved to optimality and execution times for \textsf{max\_ratio}
are comparable to those of \textsf{sum\_ratio} and faster
than \textsf{BPR}. However, for {BPR} travel costs,
\textsf{max\_ratio} leads to more challenging optimization problems,
increasing the fraction of instances that cannot be solved to optimality within
the time limit.

\begin{table}
  \centering
  \caption{The average execution time (in \si{\second}) over all instances
    solved to optimality and the success rate (in \si{\percent}).}
  \label{tab:performance_metrics}
  \begin{tabular}{lrrrrrr}
    \toprule
    & \multicolumn{3}{c}{\bf Execution time} & \multicolumn{3}{c}{\bf Success rate} \\
    \cmidrule(lr){2-4} \cmidrule(lr){5-7}
    $c(\flow)$ & \textsf{max\_ratio} & \textsf{sum\_ratio} & \textsf{BPR}
                                     & \textsf{max\_ratio} & \textsf{sum\_ratio} & \textsf{BPR} \\
    \midrule
    {linear}    & 0.33  & 0.32   & 3.05   & 100.0 & 100.0 & 100.0 \\
    {quadratic} & 3.72  & 3.65   & 19.23  & 100.0 & 100.0 & 100.0 \\
    {BPR}       & 27.52 & 83.42  & 303.24 & 92.0  & 94.0  & 96.0 \\
    \bottomrule
  \end{tabular}
\end{table}

\subsection{Sensitivity of Network Congestion to Travel Cost and Latency
  Functions}
\label{sec:sensitivity}

We now examine the impact of travel cost and latency functions on congestion
estimates. To ensure a meaningful comparison, we focus on those instances that
were solved to optimality for all nine combinations of travel cost and latency 
functions for each value of~$\numcommodities$.

\subsubsection{Impact of the Travel Cost Function on Congestion Estimates}

In Figure~\ref{fig:objective_analysis}, we show the evolution of congestion
levels as a function of the number of commodities~$\numcommodities$ for each
combination of travel cost and latency function.
\begin{figure}
  \centering
  \includegraphics[width=0.95\textwidth]{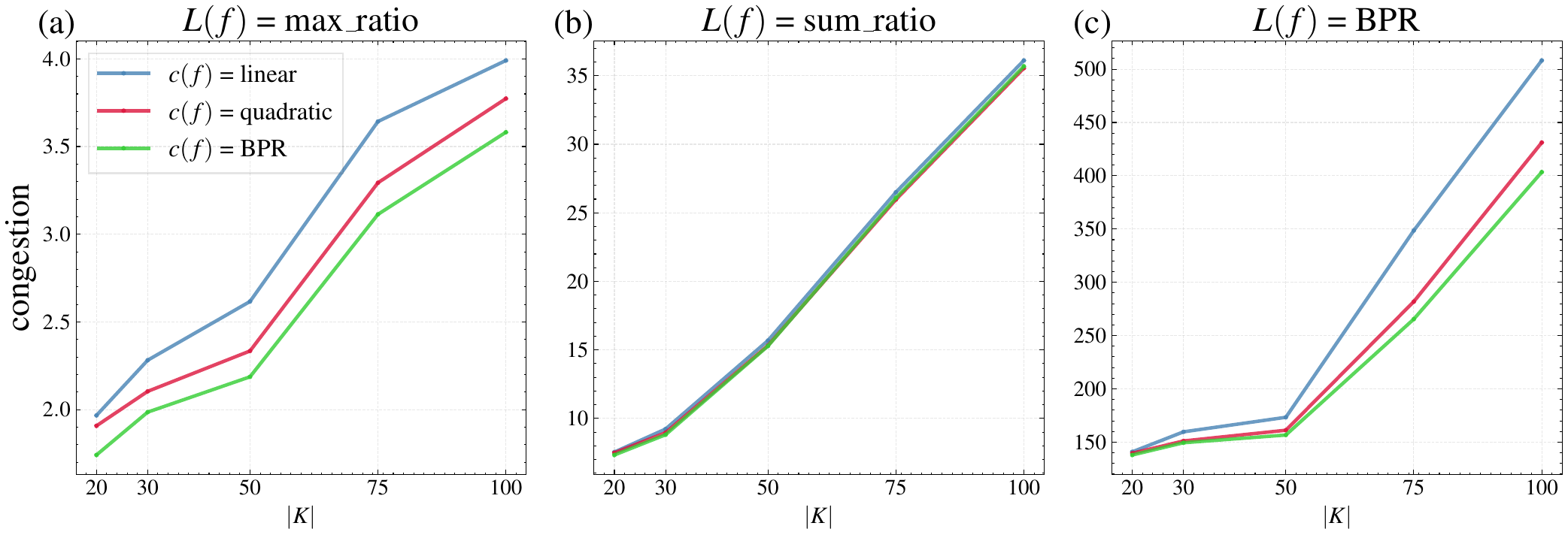}
  \caption{The congestion estimates for the models using the
    \textsf{max\_ratio} (a), \textsf{sum\_ratio} (b), and 
    \textsf{BPR} (c) congestion measures. Each
    panel shows how the average congestion level varies with the number of
    commodities for different cost functions.}
  \label{fig:objective_analysis}
\end{figure}
For a given latency function~$\latency(\flow)$, the choice of cost
function~$c(\flow)$ noticeably affects the estimated congestion levels. In
particular, the {BPR} cost function consistently yields lower congestion
estimates than {linear} or {quadratic} costs.
Figure~\ref{fig:objective_analysis}~(a) illustrates this effect for the
\textsf{max\_ratio} latency function. Here,
{BPR} costs produce congestion estimates that are about
\SI{15}{\percent}--\SI{20}{\percent} lower than those obtained with
{linear} costs.
This indicates that nonlinear travel cost functions, which penalize congestion
more strongly, are more effective at limiting extreme arc saturation.
Figure~\ref{fig:objective_analysis}~(c) shows the same qualitative behavior
for the \textsf{BPR} latency function. Again, {BPR} costs achieve the
lowest congestion values and the effect is particularly pronounced for
instances with a large number of commodities~$\numcommodities$.

In contrast, Figure~\ref{fig:objective_analysis}~(b) shows that the
\textsf{sum\_ratio} metric is less sensitive to the choice of
travel cost function as all three of them yield similar
congestion estimates across the range of~$\numcommodities$.
This behavior is explained by the aggregate nature of the metric.
Different cost functions lead to different flow distributions at the arc
level. However, local variations tend to compensate when summed over all
arcs of the network as higher utilization of some arcs is balanced by lower
utilization of others.
The small size of the network further amplifies this effect as the limited
number of alternative paths constrains the degree to which
flows can be redistributed in response to changes in the cost function.

\subsubsection{Impact of the Latency Function on Congestion Estimates}

To assess how the choice of the latency function in the objective of
the congestion model affects congestion estimates, we now analyze how
solutions obtained by optimizing one latency function perform under another.
Specifically, for a given latency function~$\latency_{\mathrm{opt}}$, i.e.,
either \textsf{max\_ratio}, \textsf{sum\_ratio}, or \textsf{BPR}, we compute
an optimal flow~$\flow^*_{\latency_{\mathrm{opt}}}$ by solving the congestion
model~\eqref{eq:minlp-ref:ue} and then evaluate the resulting congestion
level using a different latency function~$\latency_{\mathrm{eval}}$.
For this purpose, we define the relative gap
\begin{equation}
  \label{eq:gap}
  \text{gap}(\latency_{\mathrm{opt}}, \latency_{\mathrm{eval}}) =
  \frac{\latency_{\mathrm{eval}}(\flow^*_{\latency_{\mathrm{eval}}}) -
    \latency_{\mathrm{eval}}(\flow^*_{\latency_{\mathrm{opt}}})}
  {\latency_{\mathrm{eval}}(\flow^*_{\latency_{\mathrm{eval}}})}
\end{equation}
for each pair~$(\latency_{\mathrm{opt}}, \latency_{\mathrm{eval}})$.
Here, $\flow^*_{\latency_{\mathrm{opt}}}$
and~$\flow^*_{\latency_{\mathrm{eval}}}$ denote the
flows that optimize the congestion measures~$\latency_{\mathrm{opt}}$
and~$\latency_{\mathrm{eval}}$, respectively.
By construction, we have \mbox{$\text{gap}(\latency_{\mathrm{opt}},
  \latency_{\mathrm{eval}}) \geq 0$}.
Gaps close to zero indicate that optimizing for~$\latency_{\mathrm{opt}}$
yields solutions that are near-optimal when evaluated
under~$\latency_{\mathrm{eval}}$, whereas larger values
indicate a greater discrepancy between the two formulations.
We note that this analysis does not account for the multiplicity of
equilibrium flows. Hence, other equilibria optimal for~$\latency_{\mathrm{opt}}$
may perform better under~$\latency_{\mathrm{eval}}$.
Here, we restrict ourselves to evaluating the solution returned by
\textsf{Gurobi}.

\begin{table}
  \centering
  \caption{The relative gap (in \si{\percent}) on~$\latency_{\mathrm{eval}}$
    when optimizing for $L_{\mathrm{opt}}$ by the number of
    commodities~$\numcommodities$. Values are averaged over all considered
    instances and cost functions.}
  \label{tab:gaps_full}
  \begin{tabular}{llrrrrr}
    \toprule
    & & \multicolumn{5}{c}{$\numcommodities$} \\
    \cmidrule(lr){3-7}
    $\latency_{\mathrm{eval}}$ & $\latency_{\mathrm{opt}}$ & $20$ & $30$ & $50$ & $75$ & $100$ \\
    \midrule
    \textsf{max\_ratio} & \textsf{sum\_ratio} & $3.1$ & $2.3$ & $3.8$ & $5.0$ & $7.8$ \\
                        & \textsf{BPR} & $1.6$ & $0.5$ & $0.7$ & $1.1$ & $2.9$
    \\
    \midrule
    \textsf{sum\_ratio} & \textsf{max\_ratio} & $2.3$ & $3.3$ & $3.2$ & $3.1$ & $6.0$ \\
                        & \textsf{BPR} & $1.3$ & $1.3$ & $1.2$ & $1.6$ & $1.4$
    \\
    \midrule
    \textsf{BPR} & \textsf{max\_ratio} & $0.1$ & $0.3$ & $0.9$ & $2.7$ & $10.2$ \\
                        & \textsf{sum\_ratio} & $0.2$ & $0.7$ & $2.1$ & $5.6$ & $7.8$ \\
    \bottomrule
  \end{tabular}
\end{table}

Table~\ref{tab:gaps_full} reports the gaps for all distinct pairs 
of~$(\latency_{\mathrm{eval}}, \latency_{\mathrm{opt}})$ and for each value
of~$\numcommodities$.
For instances with~$\numcommodities \in \set{20,30}$, the discrepancies
between latency functions are limited, with gap values remaining within
\SI{3.3}{\percent} for all pairs.
However, as~$\numcommodities$ increases, larger differences emerge.
For instance, optimizing with respect to \textsf{sum\_ratio} instead of
\textsf{max\_ratio} results in a gap of approximately \SI{8}{\percent}
for~$\numcommodities = 100$. Similarly,
optimizing for \textsf{max\_ratio} or \textsf{sum\_ratio} 
instead of \textsf{BPR} leads to gaps of up to \SI{10}{\percent}.
Conversely, flows obtained by optimizing for \textsf{BPR} exhibit smaller
discrepancies when evaluated under other latency functions, with gaps
below \SI{3}{\percent} in most cases.
These results indicate that \textsf{BPR}-optimized flows lead to maximum
utilization and total utilization levels that are close to their respective
worst case.
Overall, the magnitude of the discrepancy between latency functions increases
with the number of commodities.

\subsection{User equilibrium vs.\ system optimum}
\label{sec:ue-so-comparison}

We now compare the UE and SO formulations of the congestion model in terms of
both computational performance and obtained congestion levels.
As before, we focus on instances that were solved to optimality by both
formulations to ensure a fair comparison.

\subsubsection{Runtime Comparison}

In Figure~\ref{fig:ue_so_time}, we compare the computation times of the UE and
SO formulations. The scatter plot in Figure~\ref{fig:ue_so_time}~(a) shows
that solving the UE formulation consistently requires shorter execution times
than solving the SO formulation. This advantage  is particularly pronounced
for instances with a large number of commodities.
Figure~\ref{fig:ue_so_time}~(b) presents the average execution time ratios
for each combination of travel cost and latency function.
Ratios are computed as the execution time of UE divided by that of SO.
The mean ratio ranges from approximately $0.4$ to $0.6$, indicating that
solving the UE formulation requires only \SI{40}{\percent}--\SI{60}{\percent}
of the time needed for solving the SO formulation.
{Linear} cost functions exhibit the highest ratios (around~$0.6$),
whereas {quadratic} and {BPR} cost functions yield lower ratios.
This suggests that the computational advantage of solving the UE formulation
increases for more involved travel cost functions. In contrast, no consistent
pattern is observed with respect to the choice of the latency function.

\begin{figure}
  \centering
  \includegraphics[width=0.95\textwidth]{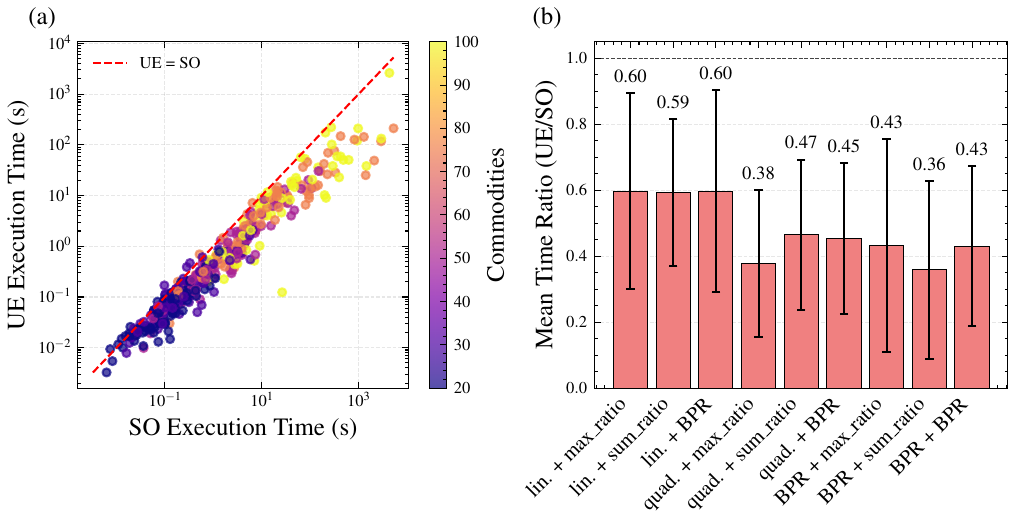}
  \caption{Computation time comparison between UE and SO formulations. 
    Figure~(a): Scatter plot of execution times (in \si{\second}, logarithmic
    scale). Each point represents a single instance, with colors indicating the
    number of commodities. The red dashed line marks instances for which
    solving the UE and SO formulation requires the same amount of time.
    Figure~(b): Mean execution time ratio (red box) with standard deviation
    (error bars) for each combination of travel cost and latency
    function. Mean values are shown above each error bar.}
  \label{fig:ue_so_time}
\end{figure}

\subsubsection{Comparison of Congestion Levels}

We now compare the congestion levels obtained by the UE and SO formulations by
analyzing the distribution of the congestion ratio (CR, see
Section~\ref{sec:congestion-ratio}) across all instances and function
combinations.
Figure~\ref{fig:congestion_ratio}~(a) shows a histogram of CR values. It can be
seen that most instances achieve CR values close to~$1$, indicating that UE
solutions typically yield congestion levels comparable to those of SO
solutions.
However, notable outliers exist, showing that UE solutions can lead to
significantly higher congestion than SO solutions in some cases.
At the same time, some instances have CR values below~$1$, which is consistent
with the behavior observed in Example~\ref{ex:cr-less-than-one}.
Figure~\ref{fig:congestion_ratio}~(b) illustrates the distribution of CR values
across different combinations of travel cost and latency functions.
As in Figure~\ref{fig:congestion_ratio}~(a), the distributions are generally
concentrated around~$1$. Combinations including the \textsf{max\_ratio}
congestion measure tend to exhibit higher congestion ratios, indicating that UE
solutions under this measure can produce more congestion relative to SO. In
contrast, the \textsf{sum\_ratio} measure yields the smallest CR values,
suggesting that UE solutions under this measure can sometimes lead to more
favorable outcomes.
Finally, combinations with {BPR} costs show a wider spread of CR values,
reflecting greater variability in congestion outcomes between the UE and SO
formulations.

\begin{figure}
  \centering
  \includegraphics[width=0.8\textwidth]{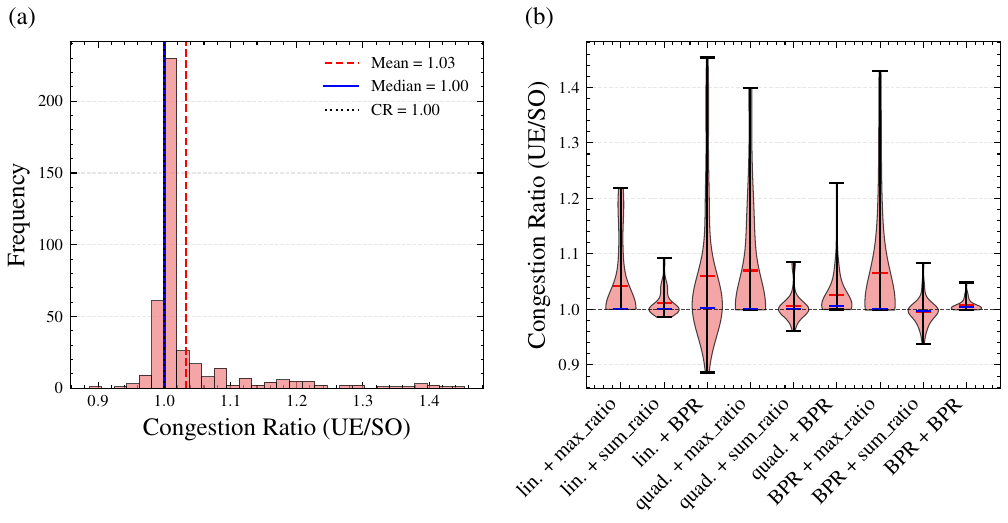}
  \caption{The distribution of the congestion ratio (CR) across all instances
    and function combinations. 
    Figure~(a): Histogram of CR values, with mean and median indicated by
    dashed red and solid blue lines, respectively.
    Figure~(b): Violin plots for the CR distribution for each 
    combination of travel cost and latency functions, with mean (red) and
    median (blue) markers.}
  \label{fig:congestion_ratio}
\end{figure}

\subsection{Uncertainty Set Comparison}
\label{sec:uncertainty-analysis}

We now compare the five uncertainty sets introduced in
Section~\ref{sec:uncertainty-set}:
the budgeted ($\Gamma$-robust) uncertainty set~$\uncertaintyset_{\text{budget}}$,
the ellipsoidal uncertainty set~$\uncertaintyset_{\text{ellipsoid}}$
with~$\rho \in \set{\sqrt{\Gamma},\, \Gamma,\, \Gamma/\sqrt{\numcommodities}}$,
and the symmetric hose uncertainty set~$\uncertaintyset_{\text{hose}}$.
For the ease of presentation, we restrict the analysis to instances derived
from the Sioux Falls network with $12$~nodes and $26$~arcs
(cf.~Figure~\ref{fig:siouxfall_networks}) for~$\numcommodities \in
\set{20,30,50,75,100}$ and all nine combinations of travel cost and latency
functions. This results in $45$~considered instances per uncertainty
set.


\subsubsection{Computational Tractability}
\label{sec:uncertainty-tractability}

Figure~\ref{fig:uncertainty_solved} illustrates the number of instances solved
to optimality within the time limit of~\SI{2}{\hour} for each uncertainty set
and combination of travel cost and latency functions.
\begin{figure}
  \centering
  \includegraphics[width=0.7\textwidth]{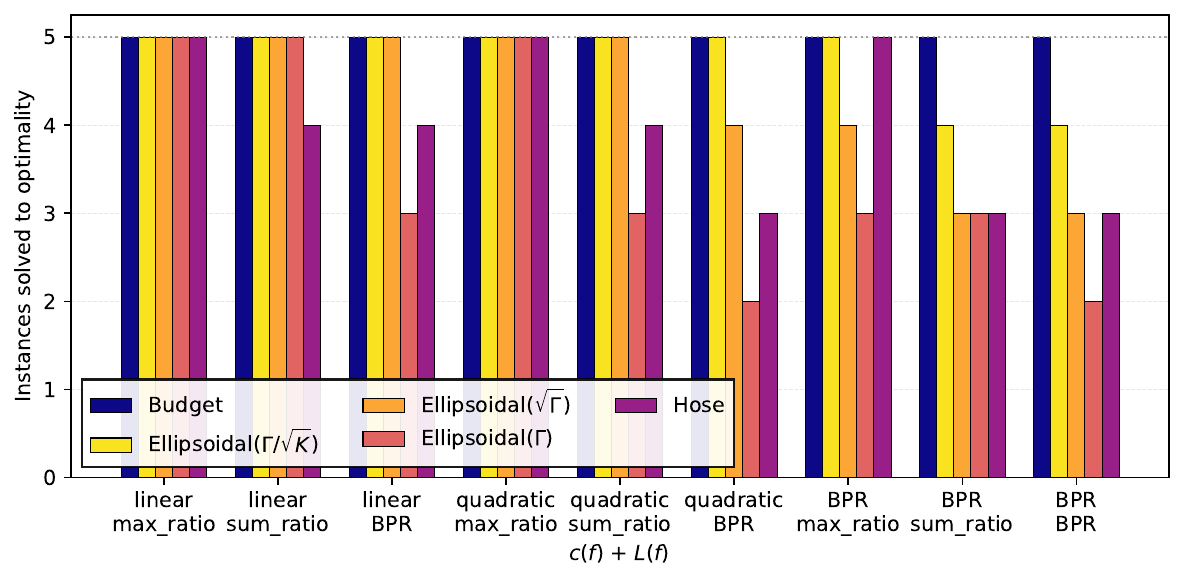}
  \caption{The number of instances solved to optimality by uncertainty set and
    function combination.}
  \label{fig:uncertainty_solved}
\end{figure}
The results indicate a clear dominance in terms of computational
tractability of the resulting congestion models: the budget uncertainty model
is the most computationally tractable, whereas the ellipsoidal model with~$\rho
= \Gamma$ seems to be the most challenging formulation.
In particular, the number of solved instances per formulation reflects the
computational challenges associated with both the geometric properties of the
uncertainty sets (e.g., their size) and the nature of the resulting
optimization problems (MILP vs.~MIQP).

Table~\ref{tab:uncertainty_time} further summarizes the average execution times
for each uncertainty model over the $30$ instances (out of $45$) that were
solved to optimality under all five uncertainty sets.
We observe that the budget uncertainty set consistently yields the shortest
solution times across all function combinations.
In contrast, the ellipsoidal model with~$\rho = \Gamma$ requires the longest
computation times, often exceeding the other formulations by two to three
orders of magnitude.

\begin{table}
  \centering
  \caption{The average computation times (in \si{\second}) by uncertainty set and
    function combination. Additionally, the number of instances solved to
    optimality (``\#opt'') is shown.}
  \label{tab:uncertainty_time}
  \begin{tabular}{llrrrrrr}
    \toprule
    & & & & \multicolumn{3}{c}{$\uncertaintyset_{\text{ellipsoid}}$} & \\
    \cmidrule(lr){5-7}
    $c(\flow)$
    & $\latency(\flow)$ & \#opt & $\uncertaintyset_{\text{budget}}$ & $\rho = \frac{\Gamma}{\sqrt{\numcommodities}}$ & $\rho = \sqrt{\Gamma}$ & $\rho = \Gamma$ & $\uncertaintyset_{\text{hose}}$ \\
    \midrule
    {linear}   && \textbf{12} & \bf{0.05} & \bf{0.60} & \bf{1.55} & \bf{150.72} & \bf{1.14} \\
            & \textsf{sum\_ratio}  &4  & 0.01 & 0.11 & 1.29 & 3.90 & 1.62 \\
            & \textsf{max\_ratio}  &5  & 0.07 & 0.83 & 1.97 & 24.34 & 0.87 \\
    & \textsf{BPR}         &3  & 0.06 & 0.87 & 1.18 & 557.09 & 0.94 \\
    \midrule
    {quadratic}&& \textbf{10} & \bf{1.20} & \bf{9.93} & \bf{33.30} & \bf{446.52} & \bf{5.54} \\
            & \textsf{sum\_ratio}  &3  & 0.05 & 1.75 & 1.99 & 4.36 & 1.16 \\
            & \textsf{max\_ratio}  &5  & 2.34 & 18.65 & 65.19 & 884.85 & 9.99 \\
    & \textsf{BPR}         &2  & 0.08 & 0.38 & 0.53 & 13.95 & 0.52 \\
    \midrule
    {BPR}      && \textbf{8} & \bf{0.05} & \bf{1.76} & \bf{1.68} & \bf{42.07} & \bf{21.34} \\
            & \textsf{sum\_ratio}  &3  & 0.65 & 3.17 & 2.85 & 98.01 & 55.90 \\
            & \textsf{max\_ratio}  &3  & 0.72 & 1.12 & 1.11 & 12.92 & 0.47 \\
            & \textsf{BPR}         &2  & 0.12 & 0.59 & 0.78 & 1.86 & 0.81 \\
    \bottomrule
  \end{tabular}
\end{table}


\subsubsection{Impact on Worst-Case Congestion}
\label{sec:uncertainty-congestion}

We now assess the impact of the uncertainty model on worst-case congestion by
comparing the congestion levels of the models with demand uncertainty to those
of the deterministic model.
In Figure~\ref{fig:uncertainty_heatmap}, we show heatmaps of the percentage
increase in congestion relative to the deterministic case for each
uncertainty model, disaggregated by travel cost and latency function.
We observe that all uncertainty models lead to higher worst-case
congestion compared to the deterministic setting.
The lowest worst-case congestion is observed for the ellipsoidal uncertainty
set with~$\rho = \Gamma/\sqrt{\numcommodities}$, followed by the budgeted
uncertainty set, the ellipsoidal sets with~$\rho = \sqrt{\Gamma}$ and~$\rho =
\Gamma$, and finally the hose model. This ordering aligns with the set
inclusions discussed in Section~\ref{sec:uncertainty-set}.
Distinct patterns also emerge with respect to the travel cost and latency
functions. Across all uncertainty sets, the \textsf{max\_ratio} congestion
measure produces the largest increases in worst-case congestion, whereas
\textsf{sum\_ratio} yields the smallest.
The hose model and the ellipsoidal uncertainty set with~$\rho = \Gamma$
consistently result in higher worst-case congestion.
We further observe that, for the budgeted model and ellipsoidal uncertainty
sets with~$\rho \in \set{\Gamma/\sqrt{\numcommodities},\, \sqrt{\Gamma}}$, the
choice of travel cost function has a limited effect on congestion levels.
In contrast, congestion levels are highly sensitive to the travel cost function
for the ellipsoidal set with~$\rho = \Gamma$ and the hose model. In particular,
the hose model produces increases exceeding \SI{50}{\percent} in most cases,
with peaks above \SI{90}{\percent} for \textsf{linear} and \textsf{quadratic}
cost functions.

\begin{figure}
  \centering
  \includegraphics[width=0.9\textwidth]{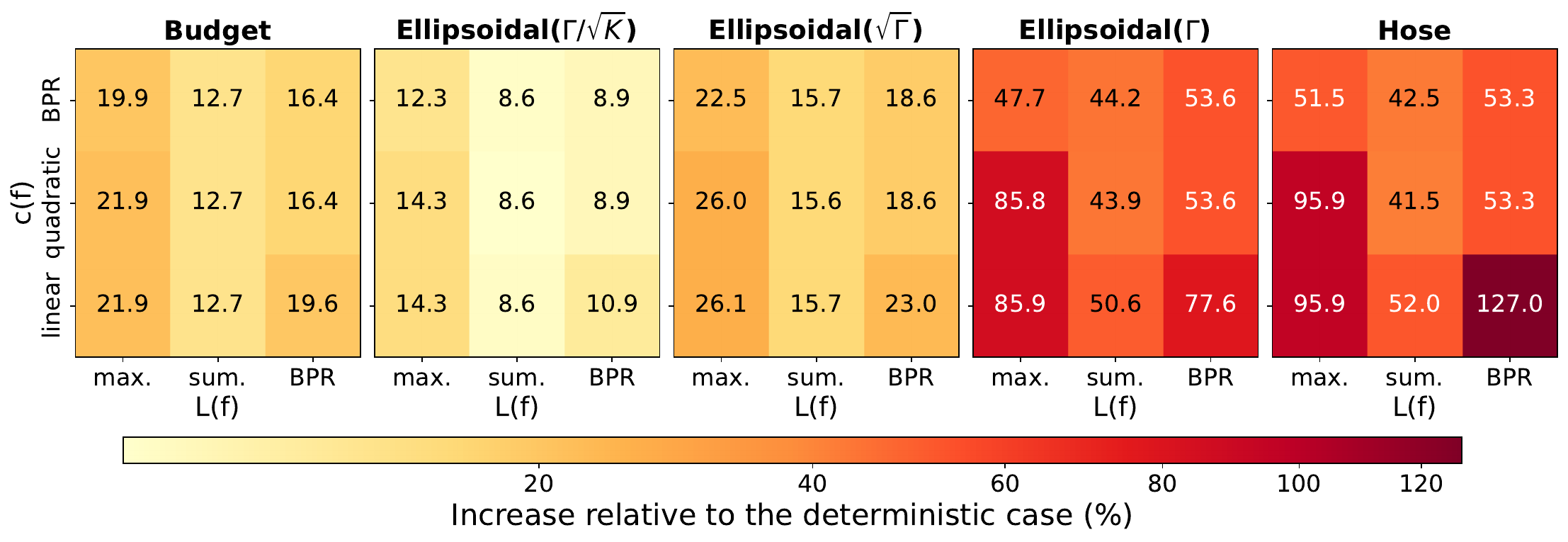}
  \caption{The percentage increase in worst-case congestion relative to the
    deterministic case, by uncertainty set and function combination.
    All panels share a common color scale for comparability. Darker colors
    indicate larger deviations from the deterministic solution.}
  \label{fig:uncertainty_heatmap}
\end{figure}

\subsection{Results for \textsf{SNDlib} Instances}
\label{subsubsec:sndlib-results}

We now discuss the computational results for the \textsf{SNDlib} instances.
These instances exhibit different network topologies and demand structures
compared to those of the Sioux Falls subnetworks, allowing us to evaluate the
congestion models from a broader perspective.

Table~\ref{tab:sndlib_by_graph} reports the number of solved instances and the
average execution times for each network across different numbers of
commodities.
For instances with \SI{10}{\percent} and \SI{25}{\percent} of the commodities
from the original network, almost all instances are solved to
optimality. However, as the number of commodities increases, solving the
congestion models becomes increasingly challenging.
Among the considered networks, \textsf{atlanta} is particularly challenging.
Only $5$ out of $9$ instances can be solved to optimality
for \SI{50}{\percent} of the commodities from the original network and we
observe that execution times are high even for smaller instances.
The same qualitative behavior can be observed for the \textsf{nobel-us} and
\textsf{pdh} networks: as the number of commodities increases, the number of
instances solved to optimality decreases.
Overall, \textsf{abilene} is the most computationally tractable network, with
all instances solved to optimality for up to \SI{50}{\percent} of the original
commodities, and $6$ out of $9$ instances solved for larger numbers of
commodities.

\begin{table}
  \centering
  \caption{The number of \textsf{SNDlib} instances solved to optimality
    (``\#opt'', out of $9$) and the average execution time (in \si{\second})
    for instances with \SI{10}{\percent}, \SI{25}{\percent}, \SI{50}{\percent},
    \SI{75}{\percent}, and \SI{100}{\percent} of the number of commodities from
    the original network.}
  \label{tab:sndlib_by_graph}
    \ifPreprint
    \resizebox{\textwidth}{!}{%
    \begin{tabular}{lrrrrrrrrrr}
      \toprule
       & \multicolumn{2}{c}{\SI{10}{\percent}} & \multicolumn{2}{c}{\SI{25}{\percent}} & \multicolumn{2}{c}{\SI{50}{\percent}} & \multicolumn{2}{c}{\SI{75}{\percent}} & \multicolumn{2}{c}{\SI{100}{\percent}} \\
      \cmidrule(lr){2-3} \cmidrule(lr){4-5} \cmidrule(lr){6-7}
      \cmidrule(lr){8-9} \cmidrule(lr){10-11}
       & \#opt & time & \#opt & time & \#opt & time & \#opt & time & \#opt & time \\
      \midrule
      \textsf{abilene}  & 9 & 2.2   & 9 & 7.0    & 9 & 11.9   & 6 & 24.9   & 6 & 28.0 \\
      \textsf{atlanta}  & 7 & 73.5  & 9 & 637.4  & 5 & 1993.4 & 3 & 29.9   & 4 & 1320.1 \\
      \textsf{nobel-us} & 9 & 1.5   & 9 & 67.1   & 4 & 1094.1 & 3 & 88.3   & 3 & 67.7 \\
      \textsf{pdh}      & 9 & 0.6   & 9 & 7.2    & 6 & 48.5   & 3 & 17.0   & 3 & 43.8 \\
      \bottomrule
    \end{tabular}}
\else
  \begin{tabular}{lrrrrrrrrrr}
      \toprule
       & \multicolumn{2}{c}{\SI{10}{\percent}} & \multicolumn{2}{c}{\SI{25}{\percent}} & \multicolumn{2}{c}{\SI{50}{\percent}} & \multicolumn{2}{c}{\SI{75}{\percent}} & \multicolumn{2}{c}{\SI{100}{\percent}} \\
      \cmidrule(lr){2-3} \cmidrule(lr){4-5} \cmidrule(lr){6-7}
      \cmidrule(lr){8-9} \cmidrule(lr){10-11}
       & \#opt & time & \#opt & time & \#opt & time & \#opt & time & \#opt & time \\
      \midrule
      \textsf{abilene}  & 9 & 2.2   & 9 & 7.0    & 9 & 11.9   & 6 & 24.9   & 6 & 28.0 \\
      \textsf{atlanta}  & 7 & 73.5  & 9 & 637.4  & 5 & 1993.4 & 3 & 29.9   & 4 & 1320.1 \\
      \textsf{nobel-us} & 9 & 1.5   & 9 & 67.1   & 4 & 1094.1 & 3 & 88.3   & 3 & 67.7 \\
      \textsf{pdh}      & 9 & 0.6   & 9 & 7.2    & 6 & 48.5   & 3 & 17.0   & 3 & 43.8 \\
      \bottomrule
  \end{tabular}
\fi
\end{table}

In Table~\ref{tab:sndlib_func_combo}, we additionally report results
disaggregated by travel cost and latency function.
For each function combination, we consider a total of $20$~\textsf{SNDlib}
instances, aggregated across all networks.
Overall, $125$ out of $180$~instances (\SI{69.4}{\percent}) are solved to
optimality.
Moreover, the results for the \textsf{SNDlib} instances support the previous
observations made for the Sioux Falls subnetworks.
All $20$ instances with {linear} travel costs are solved to optimality,
whereas the success rate decreases to \SI{65}{\percent} for {quadratic}
and \SI{43.3}{\percent} for {BPR} costs.
Among the latency functions, \textsf{sum\_ratio} tends to require longer
execution times when combined with {quadratic} costs
(\SI{916.9}{\second} on average), whereas \textsf{max\_ratio} remains faster
across all cost functions.

\begin{table}
  \centering
  \caption{The number of instances solved to optimality (``\#opt'', out of
    $20$ instances) and the average execution time (``time'', in
    \si{\second}) for different combinations of travel cost and latency
    functions.}
  \label{tab:sndlib_func_combo}
  \begin{tabular}{llrrrrr}
    \toprule
    $c(\flow)$ & $\latency(\flow)$ & \#opt & time \\
    \midrule
    \textsf{linear} & \textsf{sum\_ratio} & 20 & 39.3  \\
               & \textsf{max\_ratio} & 20 & 24.8  \\
               & \textsf{BPR}        & 20 & 22.2  \\
    \midrule
    \textsf{quadratic} & \textsf{sum\_ratio} & 14 & 916.9 \\
               & \textsf{max\_ratio} & 12 & 29.0  \\
               & \textsf{BPR}        & 13 & 538.7 \\
    \midrule
    \textsf{BPR} & \textsf{sum\_ratio} &  8 & 70.0  \\
               & \textsf{max\_ratio} &  9 & 577.7 \\
               & \textsf{BPR}        &  9 & 47.5  \\
    \bottomrule
  \end{tabular}
\end{table}

Overall, the \textsf{SNDlib} instances are more challenging to solve than the
Sioux Falls subnetworks, especially for nonlinear travel cost functions.
This is likely due to the more complex topologies of the \textsf{SNDlib}
networks, as the absence of articulation nodes limits the
effectiveness of certain enhancement techniques described in
Section~\ref{sec:tightening}.

\subsection{Managerial Insights}
\label{sec:managerial-insights}

To conclude this computational study, we highlight insights relevant
for practitioners and network operators.

When selecting travel cost functions, congestion measures, routing policies,
and uncertainty sets for traffic networks under demand uncertainty,
practitioners must balance model realism, computational tractability, and
solution quality.
Our study shows that the choice of travel cost function has a stronger impact
on computational tractability than the choice of congestion measure.
{Linear} cost functions yield the fastest solution times and highest
success rates. {BPR} cost functions, although more realistic in
capturing congestion effects, require longer solution times and may fail to
reach optimality for large instances. For operational settings that require
frequent re-optimization, {linear} or {quadratic} cost functions
provide a practical compromise between realism and computational tractability.
Nevertheless, flows obtained using BPR costs produce congestion levels that are
close to the worst case using {linear} or {quadratic} cost
functions across all latency measures, suggesting that {BPR} costs may serve
as a prudent choice if the appropriate measure is not known exactly.

Among the congestion measures, \textsf{sum\_ratio} seems to provide the best
trade-off between computational tractability and solution quality. The
\textsf{max\_ratio} measure, although effective at limiting extreme arc
utilization, leads to more challenging optimization problems if combined with
nonlinear travel cost functions. 

Comparing UE and SO formulations, we find that solving the UE
formulation requires only \SI{40}{\percent}--\SI{60}{\percent} of the time
needed for the SO formulation. Hence, the UE formulation is preferable for
repeated re-optimization and when a more realistic representation of travelers'
behavior is~desired.

Our study further underscores the importance of accounting for demand
uncertainty. Even the smallest uncertainty set considered in our evaluations
increases worst-case congestion by at least \SI{4.7}{\percent} compared to the
deterministic case. On average, solutions under uncertainty exhibit
\SI{11}{\percent}--\SI{72}{\percent} higher congestion than their deterministic
counterparts, depending on the chosen uncertainty set.
Ignoring uncertainty can thus lead to substantial underestimation of network
congestion.
Regarding the choice of the uncertainty set, we observe a trade-off between
worst-case estimates and computational tractability. 
%
The budgeted uncertainty set offers the most favorable balance, providing
protection against demand fluctuations with moderate increases in worst-case
congestion, while achieving high success rates and the fastest computation
times.
The ellipsoidal uncertainty set with~$\rho = \sqrt{\Gamma}$ yields similar
worst-case congestion levels as the budgeted model but is computationally
slightly more expensive. Thus, the budgeted model should be preferred.
Overall, the hose model and the ellipsoidal uncertainty set with~$\rho =
\Gamma$ produce the highest worst-case congestion levels.
Hence, these models are suitable if strong protection against fluctuating
demand is critical, but practitioners should be aware of the reduced
computational tractability of these formulations.


To sum up, for typical operational planning, the combination of a budgeted
uncertainty model with the \textsf{sum\_ratio} congestion measure and
{linear} or {quadratic} cost functions appears preferable, as it
provides resilient solutions within reasonable computation times for networks
with up to $100$~commodities.
If peak congestion is the main concern, the \textsf{max\_ratio} measure should
be adopted, keeping in mind that this may lead to more challenging problems to
solve. For long-term planning, where computation time is less critical,
{BPR} cost functions offer a more realistic representation of
congestion.


\section{Conclusion}
\label{sec:conclusion}

We study the problem of determining the worst-case congestion in
a multi-commodity traffic network under demand uncertainty. Our aim is to
stress-test a given network by identifying demand realizations and
corresponding travelers' route choices that maximize congestion. To this end,
we propose a novel bilevel formulation in which a traffic planner acts as the
leader and the users of the traffic network act as the followers.
In the leader's problem, we account for demand uncertainty using ideas from
robust optimization, whereas the follower's problem models a traffic
equilibrium in which travelers follow one of the two Wardrop
principles---the user equilibrium (UE) or the system optimum (SO).
We develop single-level mixed-integer nonlinear reformulations
for the resulting congestion models that exploit binary variables and big-$M$
constants, prove the existence of optimal solutions, derive valid big-$M$s, and
propose several enhancement techniques to strengthen the formulations.
Moreover, we perform an extensive computational study on instances of the Sioux
Falls network and the \textsf{SNDlib}, which provides several key insights.

First, we observe that the proposed enhancement techniques significantly
improve computational performance, reducing solution times by over
\SI{90}{\percent} and enabling the solution of larger network
instances that would otherwise remain unsolved within the time limit
of~\SI{2}{\hour}.
Second, the choice of travel cost and latency functions has a substantial
impact on both the computational tractability of the congestion models and the
estimated worst-case congestion.
In particular, we observe that the choice of the cost
function has a stronger impact on the tractability of the model than the
congestion measure.
%
Third, solving the UE formulation can be done considerably faster than solving
the SO formulation, whereas congestion levels under the UE are typically very
close to those under the SO. Indeed, by studying the so-called congestion
ratio, we show that the two equilibrium concepts yield similar worst-case
congestion levels.
This indicates that, under demand uncertainty, centralizing routing decisions
as in the SO does not provide a clear advantage over the UE in terms of
reducing congestion.
In particular, although the SO aims at minimizing the total travel time in the
system, it does not always lead to less congestion compared to the UE in our
framework.
Fourth and finally, our analysis of budgeted, ellipsoidal, and hose uncertainty
sets reveals a trade-off between worst-case congestion and computational
cost. The budgeted model can be solved the fastest, whereas the ellipsoidal and
hose models require more computation time but generally yield higher congestion
estimates.
Based on these findings, we derive managerial guidelines for choosing the most
appropriate modeling of congestion, travel costs, and uncertainty.

This work provides a first step towards designing traffic networks that
are more resilient to congestion under demand uncertainty. Building on our
stress-testing framework, a natural next step is to embed it into a
network design model, resulting in a robust bilevel optimization problem in
which infrastructure decisions are made while anticipating traffic equilibrium
responses under demand uncertainty.


\section*{Acknowledgment}

This research was funded in part by the French National Research
Agency (ANR) under project ANR-24-CE48-3565-03.
Sara Mattia is a member of GNAMPA-INdAM.
The authors contributed equally to this work (are co-lead authors) and are
listed in alphabetical order.


\enlargethispage{1cm}
\printbibliography

@article{Cerulli_et_al:2024,
  author       = {Martina Cerulli and
                  Claudia Archetti and
                  Elena Fern{\'{a}}ndez and
                  Ivana Ljubi\'c},
  title        = {A Bilevel Approach for Compensation and Routing Decisions in Last-Mile
                  Delivery},
  journal      = {Transportation Science},
  volume       = {58},
  number       = {5},
  pages        = {1076--1100},
  year         = {2024},
  doi          = {10.1287/TRSC.2023.0129}
}

@article{Cominetti_et_al:2024,
  title =	 {Monotonicity of equilibria in nonatomic congestion games},
  journal =	 {European Journal of Operational Research},
  volume =	 {316},
  number =	 {2},
  pages =	 {754--766},
  year =	 {2024},
  doi =		 {10.1016/j.ejor.2024.01.050},
  author =	 {Roberto Cominetti and Valerio Dose and Marco Scarsini},
}

@article{Tian_et_al:2017,
  author =	 {Tian, Liang and Bashan, Amir and Shi, Da-Ning and Liu,
                  Yang-Yu},
  year =	 {2017},
  title =	 {Articulation points in complex networks},
  journal =	 {Nature Communications},
  volume =	 {8},
  number =	 {14223},
  doi =		 {10.1038/ncomms14223},
}

@article{Hansen_et_al:1992,
  author       = {Hansen, Pierre and Jaumard, Brigitte and Savard, Gilles},
  publisher    = {SIAM},
  date         = {1992},
  doi          = {10.1137/0913069},
  journaltitle = {{SIAM} Journal on Scientific and Statistical Computing},
  number       = {5},
  pages        = {1194--1217},
  title        = {New branch-and-bound rules for linear bilevel programming},
  volume       = {13},
}

@article{Rey_Levin:2024,
  title =	 {A branch-and-price-and-cut algorithm for discrete network
                  design problems under traffic equilibrium},
  journal =	 {Transportation Research Part B: Methodological},
  volume =	 {212},
  pages =	 {103563},
  year =	 {2026},
  author =	 {David Rey and Michael W. Levin},
  doi =		 {10.1016/j.trb.2026.103563},
}

@incollection{Correa_Stier-Moses:2011,
  title =	 {Wardrop Equilibria},
  author =	 {José R. Correa and Nicolás E. Stier-Moses},
  year =	 {2011},
  doi =		 {10.1002/9780470400531.eorms0962},
  booktitle =	 {Wiley Encyclopedia of Operations Research and Management
                  Science},
  publisher =	 {Wiley},
}

@book{Patriksson:2015,
  title =	 {The Traffic Assignment Problem: Models and Methods},
  author =	 {Patriksson, Michael},
  year =	 {2015},
  publisher =	 {Courier Dover Publications}
}

@misc{Boyles_et_al:2025,
  title =	 {Transportation Network Analysis, Volume I: Static and Dynamic
                  Traffic Assignment},
  author =	 {Stephen D. Boyles and Nicholas E. Lownes and Avinash
                  Unnikrishnan},
  year =	 {2025},
  url =		 {https://arxiv.org/abs/2502.05182},
}

@book{West:2001,
  author =	 {Douglas Brent West},
  title =	 {Introduction to Graph Theory},
  edition =	 {2nd Edition},
  publisher =	 {Pearson Education, Inc.},
  year =	 {2001},
}

@manual{Bureau-Public-Roads:1964,
  author =	 {{U.S.\ Bureau of Public Roads}},
  title =	 {Traffic assignment manual},
  publisher =	 {U.S. Government Printing Office},
  address =	 {Washington, D.C.},
  year =	 {1964},
}

@article{Wardrop_Whitehead:1952,
  author =	 {Wardrop, J. G. and Whitehead, J. I.},
  title =	 {Correspondence. Some Theoretical Aspects of Road Traffic
                  Research},
  journal =	 {ICE Proceedings: Engineering Divisions},
  volume =	 {1},
  number =	 {5},
  year =	 {1952},
  pages =	 {767},
  doi =		 {10.1680/ipeds.1952.11362}
}

@article{Wardrop:1952,
  author =	 {Wardrop, J. G.},
  title =	 {Some Theoretical Aspects of Road Traffic Research},
  journal =	 {Proceedings of the Institution of Civil Engineers},
  volume =	 {1},
  number =	 {3},
  year =	 {1952},
  pages =	 {325–-362},
  doi =		 {10.1680/ipeds.1952.11259},
}

@article{Soyster:1973,
  author =	 {Soyster, A. L.},
  date =	 {1973},
  doi =		 {10.1287/opre.21.5.1154},
  journaltitle = {Operations Research},
  number =	 {5},
  pages =	 {1154--1157},
  title =	 {Technical Note---Convex Programming with Set-Inclusive
                  Constraints and Applications to Inexact Linear Programming},
  volume =	 {21},
}

@book{Ben-Tal_et_al:2009,
  author       = {Ben-Tal, Aharon and Ghaoui, Laurent and Nemirovski, Arkadi},
  date         = {2009},
  doi          = {10.1515/9781400831050},
  journaltitle = {Robust Optimization},
  title        = {Robust Optimization},
}

@article{Bertsimas_et_al:2011,
  author =	 {Bertsimas, Dimitris and Brown, David B. and Caramanis,
                  Constantine},
  title =	 {Theory and Applications of Robust Optimization},
  journal =	 {SIAM Review},
  volume =	 {53},
  number =	 {3},
  pages =	 {464--501},
  year =	 {2011},
  doi =		 {10.1137/080734510},
}

@article{Chouman_et_al:2017,
  author =	 {Mervat Chouman and Teodor Gabriel Crainic and Bernard
                  Gendron},
  title =	 {Commodity Representations and Cut-Set-Based Inequalities for
                  Multicommodity Capacitated Fixed-Charge Network Design},
  year =	 {2017},
  journal =	 {Transportation Science},
  volume =	 {51},
  number =	 {2},
  pages =	 {650--667},
  doi =		 {10.1287/trsc.2015.0665},
}

@article{Bienstock_et_al:1998,
  author =	 {Bienstock, Daniel and Chopra, Sunil and G\"unl\"uk, Oktay and
                  Tsai, Chih-Yang},
  title =	 {Minimum cost capacity installation for multicommodity network
                  flows},
  journal =	 {Mathematical Programming},
  volume =	 {81},
  number =	 {2},
  pages =	 {177--199},
  year =	 {1998},
  doi =		 {10.1007/BF01581104},
}

@techreport{Beck_et_al:2024,
  author =	 {Yasmine Beck and Martine Labbé and Martin Schmidt},
  title =	 {Toll Setting with Robust Wardrop Equilibrium Conditions Under
                  Budgeted Uncertainty},
  year =	 {2024},
  url =		 {https://optimization-online.org/?p=26949}
}

@article{Ferris_Pang:1997,
  author =	 {Ferris, M. C. and Pang, J. S.},
  title =	 {Engineering and Economic Applications of Complementarity
                  Problems},
  journal =	 {SIAM Review},
  volume =	 {39},
  number =	 {4},
  pages =	 {669--713},
  year =	 {1997},
  doi =		 {10.1137/S0036144595285963},
}

@book{Luo_et_al:1996,
  author =	 {Zhi-Quan Luo and Jong-Shi Pang and Daniel Ralph},
  title =	 {Mathematical Programs with Equilibrium Constraints},
  year =	 {1996},
  doi =		 {10.1017/CBO9780511983658},
  publisher =	 {Cambridge University Press},
}

@article{Bertsimas_Sim:2003,
  author       = {Bertsimas, Dimitris and Sim, Melvyn},
  year         = {2003},
  doi          = {10.1007/s10107-003-0396-4},
  journal      = {Mathematical Programming},
  pages        = {49--71},
  title        = {Robust discrete optimization and network flows},
  volume       = {98},
}

@article{Bertsimas_Sim:2004,
  author       = {Bertsimas, Dimitris and Sim, Melvyn},
  year         = {2004},
  doi          = {10.1287/opre.1030.0065},
  journal      = {Operations Research},
  number       = {1},
  pages        = {35--53},
  title        = {The Price of Robustness},
  volume       = {52},
}

@thesis{Sim:2004,
  author      = {Sim, Melvyn},
  institution = {Massachusetts Institute of Technology, Sloan School of Management},
  url         = {https://dspace.mit.edu/handle/1721.1/17725},
  date        = {2004},
  title       = {Robust Optimization},
  type        = {phdthesis},
}

@article{Mattia:2019,
  author =	 {Sara Mattia},
  year =	 {2019},
  title =	 {A polyhedral analysis of the capacitated edge activation
                  problem with uncertain demands},
  journal =	 {Networks},
  volume =	 {74},
  number =	 {2},
  pages =	 {190--204},
  doi =		 {10.1002/net.21888},
}

@article{Mattia_Poss:2018,
  author =	 {Mattia, Sara and Poss, Michael},
  year =	 {2018},
  title =	 {A comparison of different routing schemes for the robust
                  network loading problem: polyhedral results and computation},
  journal =	 {Computational Optimization and Applications},
  volume =	 {69},
  number =	 {3},
  pages =	 {753--800},
  doi =		 {10.1007/s10589-017-9956-z},
}

@InProceedings{Ordonez_Stier-Moses:2007,
  author =	 {Ord{\'o}{\~{n}}ez, Fernando and Stier-Moses, Nicol{\'a}s E.},
  editor =	 {Chahed, Tijani and Tuffin, Bruno},
  title =	 {Robust Wardrop Equilibrium},
  booktitle =	 {Network Control and Optimization},
  year =	 {2007},
  publisher =	 {Springer Berlin Heidelberg},
  address =	 {Berlin, Heidelberg},
  pages =	 {247--256},
  isbn =	 {978-3-540-72709-5},
  doi =		 {10.1007/978-3-540-72709-5_26},
}

@article{Ordonez_Stier-Moses:2010,
  author =	 {Ord{\'o}{\~{n}}ez, Fernando and Stier-Moses, Nicol{\'a}s E.},
  title =	 {Wardrop Equilibria with Risk-Averse Users},
  journal =	 {Transportation Science},
  volume =	 {44},
  number =	 {1},
  pages =	 {63--86},
  year =	 {2010},
  doi =		 {10.1287/trsc.1090.0292},
}

@thesis{Ito:2011,
  title =	 {Robust Wardrop Equilibria in the Traffic Assignment Problem
                  with Uncertain Data},
  author =	 {Ito, Yoshihiko},
  year =	 {2011},
  school =	 {Graduate School of Informatics, Kyoto University},
  type =	 {mathesis},
  department =	 {Department of Applied Mathematics and Physics},
  url =
                  {http://www-optima.amp.i.kyoto-u.ac.jp/papers/master/2011_master_ito.pdf},
}

@article{Fortuny-Amat_Mccarl:1981,
  author       = {Fortuny-Amat, José and McCarl, Bruce},
  publisher    = {Palgrave Macmillan Journals on behalf of the Operational Research Society},
  year         = {1981},
  doi          = {10.1057/jors.1981.156},
  journal      = {The Journal of the Operational Research Society},
  number       = {9},
  pages        = {783--792},
  title        = {A Representation and Economic Interpretation of a Two-Level Programming Problem},
  volume       = {32},
}

@book{Dempe:2002,
  author    = {Dempe, S.},
  publisher = {Springer US},
  date      = {2002},
  doi       = {10.1007/b101970},
  title     = {Foundations of Bilevel Programming},
}

@article{Colson-et-al:2007,
  author =	 {Colson, Benoît and Marcotte, Patrice and Savard, Gilles},
  publisher =	 {Springer},
  date =	 {2007},
  journaltitle = {Annals of Operations Research},
  pages =	 {235--256},
  title =	 {An overview of bilevel optimization},
  volume =	 {153},
  doi =		 {10.1007/s10479-007-0176-2},
}

@article{Colson_et_al:2005,
  author =	 {Colson, Benoît and Marcotte, Patrice and Savard, Gilles},
  date =	 {2005},
  journaltitle = {4OR},
  pages =	 {87--107},
  title =	 {Bilevel programming: A survey},
  doi =		 {10.1007/s10288-005-0071-0},
}

@book{Ahuja_et_al:1993,
  author =	 {Ahuja, Ravindra K. and Magnanti, Thomas L. and Orlin, James
                  B.},
  title =	 {Network Flows: Theory, Algorithms, and Applications},
  year =	 {1993},
  publisher =	 {Prentice Hall},
}

@article{LeBlanc_et_al:1975,
  title =	 {An efficient approach to solving the road network equilibrium
                  traffic assignment problem},
  journal =	 {Transportation Research},
  volume =	 {9},
  number =	 {5},
  pages =	 {309--318},
  year =	 {1975},
  doi =		 {10.1016/0041-1647(75)90030-1},
  author =	 {Larry J. LeBlanc and Edward K. Morlok and William
                  P. Pierskalla},
}

@book{Bard:1998,
  author =	 {Bard, J. F.},
  publisher =	 {Springer New York, NY},
  date =	 {1998},
  doi =		 {10.1007/978-1-4757-2836-1},
  title =	 {Practical Bilevel Optimization: Algorithms and Applications},
  series =	 {Nonconvex Optimization and Its Applications},
}

@book{Dempe_et_al:2015,
  author =	 {Stephan Dempe and Vyacheslav Kalashnikov and Gerardo
                  A. Pérez-Valdés and Nataliya Kalashnykova},
  publisher =	 {Springer Berlin, Heidelberg},
  date =	 {2015},
  doi =		 {10.1007/978-3-662-45827-3},
  title =	 {Bilevel Programming Problems: Theory, Algorithms and
                  Applications to Energy Networks},
  series =	 {Energy Systems},
}

@article{Kleinert_et_al:2021,
  title =	 {A Survey on Mixed-Integer Programming Techniques in Bilevel
                  Optimization},
  author =	 {Thomas Kleinert and Martine Labbé and Ivana Ljubi\'c and
                  Martin Schmidt},
  year =	 {2021},
  journal =	 {EURO Journal on Computational Optimization},
  issn =	 {2192-4406},
  volume =	 {9},
  doi =		 {10.1016/j.ejco.2021.100007},
}

@inproceedings{SNDlib10,
  author =	 {S. Orlowski and M. Pi{\'o}ro and A. Tomaszewski and
                  R. Wess{\"a}ly},
  booktitle =	 {Proceedings of the 3rd International Network Optimization
                  Conference (INOC 2007), Spa, Belgium},
  title =	 {{SNDlib} 1.0--{S}urvivable {N}etwork {D}esign {L}ibrary},
  year =	 {2007},
  url =
                  {https://opus4.kobv.de/opus4-zib/frontdoor/deliver/index/docId/958/file/ZR-07-15.pdf}
}

@article{Orlowski_et_al:2010,
  author =	 {S. Orlowski and M. Pi{\'o}ro and A. Tomaszewski and
                  R. Wess{\"a}ly},
  doi =		 {10.1002/net.20371},
  journal =	 {Networks},
  number =	 {3},
  pages =	 {276--286},
  title =	 {{SNDlib} 1.0--{S}urvivable {N}etwork {D}esign {L}ibrary},
  volume =	 {55},
  year =	 {2010},
}

@article{Dempe_Zemkoho:2012,
  author =	 {Dempe, Stephan and Zemkoho, Alain},
  title =	 {Bilevel road pricing: theoretical analysis and optimality
                  conditions},
  journal =	 {Annals of Operations Research},
  volume =	 {196},
  pages =	 {223--240},
  year =	 {2012},
  doi =		 {10.1007/s10479-011-1023-z},
}

@article{LeBlanc_Boyce:1986,
  title =	 {A bilevel programming algorithm for exact solution of the
                  network design problem with user-optimal flows},
  journal =	 {Transportation Research Part B: Methodological},
  volume =	 {20},
  number =	 {3},
  pages =	 {259--265},
  year =	 {1986},
  doi =		 {10.1016/0191-2615(86)90021-4},
  author =	 {Larry J. LeBlanc and David E. Boyce}
}

@article{Marcotte:1986,
  author =	 {Marcotte, Patrice},
  year =	 {1986},
  title =	 {Network design problem with congestion effects: A case of
                  bilevel programming},
  journal =	 {Mathematical Programming},
  volume =	 {34},
  number =	 {2},
  pages =	 {142--162},
  doi =		 {10.1007/BF01580580},
}

@article{Ben-Ayed_et_al:1988,
  author =	 {Omar Ben-Ayed and David E. Boyce and Charles E. Blair},
  title =	 {A general bilevel linear programming formulation of the
                  network design problem},
  journal =	 {Transportation Research Part B: Methodological},
  volume =	 {22},
  number =	 {4},
  pages =	 {311--318},
  year =	 {1988},
  doi =		 {10.1016/0191-2615(88)90006-9},
}

@article{Migdalas:1995,
  author =	 {Migdalas, Athanasios},
  year =	 {1995},
  title =	 {Bilevel programming in traffic planning: Models, methods and
                  challenge},
  journal =	 {Journal of Global Optimization},
  volume =	 {7},
  number =	 {4},
  pages =	 {381--405},
  doi =		 {10.1007/BF01099649},
}

@article{Fontaine_Minner:2014,
  author =	 {Pirmin Fontaine and Stefan Minner},
  title =	 {Benders Decomposition for Discrete–Continuous Linear Bilevel
                  Problems with application to traffic network design},
  journal =	 {Transportation Research Part B: Methodological},
  volume =	 {70},
  pages =	 {163--172},
  year =	 {2014},
  doi =		 {10.1016/j.trb.2014.09.007},
}

@article{Labbe_et_al:1998,
  author =	 {Martine Labbé and Patrice Marcotte and Gilles Savard},
  title =	 {A Bilevel Model of Taxation and Its Application to Optimal
                  Highway Pricing},
  year =	 {1998},
  journal =	 {Management Science},
  volume =	 {44},
  number =	 {12.1},
  doi =		 {10.1287/mnsc.44.12.1608}
}

@article{Brotcorne_et_al:2001,
  author =	 {Luce Brotcorne and Martine Labbé and Patrice Marcotte and
                  Gilles Savard},
  title =	 {A Bilevel Model for Toll Optimization on a Multicommodity
                  Transportation Network},
  year =	 {2001},
  doi =		 {10.1287/trsc.35.4.345.10433},
  journal =	 {Transportation Science},
  volume =	 {35},
  number =	 {4},
}

@inbook{Kalashnikov_et_al:2020,
  author =	 {Kalashnikov, Vyacheslav and Flores Mu{\~{n}}iz, Jos{\'e}
                  Guadalupe and Kalashnykova, Nataliya},
  editor =	 {Kosheleva, Olga and Shary, Sergey P. and Xiang, Gang and
                  Zapatrin, Roman},
  title =	 {Bilevel Optimal Tolls Problems with Nonlinear Costs: A
                  Heuristic Solution Method},
  bookTitle =	 {Beyond Traditional Probabilistic Data Processing Techniques:
                  Interval, Fuzzy etc. Methods and Their Applications},
  year =	 {2020},
  publisher =	 {Springer International Publishing},
  address =	 {Cham},
  pages =	 {481--516},
  doi =		 {10.1007/978-3-030-31041-7_28},
}

@article{Dewez_et_al:2008,
  author =	 {Sophie Dewez and Martine Labbé and Patrice Marcotte and
                  Gilles Savard},
  title =	 {New formulations and valid inequalities for a bilevel pricing
                  problem},
  year =	 {2008},
  journal =	 {Operations Research Letters},
  volume =	 {36},
  number =	 {2},
  pages =	 {141--149},
  doi =		 {10.1016/j.orl.2007.03.005},
}

@article{Li_et_al:2011,
  author =	 {Li, Zukui and Ding, Ran and Floudas, Christodoulos A.},
  title =	 {A Comparative Theoretical and Computational Study on Robust
                  Counterpart Optimization: I. Robust Linear Optimization and
                  Robust Mixed Integer Linear Optimization},
  journal =	 {Industrial \& Engineering Chemistry Research},
  volume =	 {50},
  number =	 {18},
  pages =	 {10567--10603},
  year =	 {2011},
  doi =		 {10.1021/ie200150p},
}

@article{Altin:2007,
  author =	 {A. Alt{\i}n and E. Amaldi and P. Belotti and
                  M.\c{C}. P{\i}nar},
  title =	 {Provisioning virtual private networks under traffic
                  uncertainty},
  journal =	 {Networks},
  volume =	 {49},
  number =	 {1},
  year =	 {2007},
  pages =	 {100--115},
  doi =		 {10.1002/net.20145}
}

@article{mattiarnl,
  author =	 {S. Mattia},
  journal =	 {Computational Optimization and Applications},
  title =	 {The Robust Network Loading Problem with Dynamic Routing},
  volume =	 54,
  number =	 3,
  pages =	 {619--643},
  year =	 {2013},
  doi =		 {10.1007/s10589-012-9500-0},
}

@article{oriolo,
  author =	 {C. Chekuri and G. Oriolo and M.G. Scutell\`{a} and
                  B. Shepherd},
  journal =	 {Networks},
  number =	 {1},
  pages =	 {50--154},
  title =	 {Hardness of Robust Network Design},
  volume =	 {50},
  year =	 {2007},
  doi =		 {10.1002/net.20165},
}

@article{hose,
  title =	 {Designing Least-Cost Nonblocking Broadband Networks},
  journal =	 {Journal of Algorithms},
  volume =	 {24},
  number =	 {2},
  pages =	 {287--309},
  year =	 {1997},
  doi =		 {10.1006/jagm.1997.0866},
  author =	 {J.Andrew Fingerhut and Subhash Suri and Jonathan S. Turner},
}

@inproceedings{hose2,
  author =	 {Duffield, N. G. and Goyal, Pawan and Greenberg, Albert and
                  Mishra, Partho and Ramakrishnan, K. K. and van der Merive,
                  Jacobus E.},
  title =	 {A flexible model for resource management in virtual private
                  networks},
  year =	 {1999},
  publisher =	 {Association for Computing Machinery},
  address =	 {New York, NY, USA},
  doi =		 {10.1145/316188.316209},
  booktitle =	 {Proceedings of the Conference on Applications, Technologies,
                  Architectures, and Protocols for Computer Communication},
  pages =	 {95–108},
  location =	 {Cambridge, Massachusetts, USA},
  series =	 {SIGCOMM '99},
}

@inproceedings{PoA,
  author =	 {Koutsoupias, Elias and Papadimitriou, Christos},
  title =	 {Worst-case equilibria},
  year =	 {1999},
  publisher =	 {Springer-Verlag},
  address =	 {Berlin, Heidelberg},
  booktitle =	 {Proceedings of the 16th Annual Conference on Theoretical
                  Aspects of Computer Science},
  pages =	 {404--413},
  numpages =	 {10},
  location =	 {Trier, Germany},
  series =	 {STACS'99},
  doi =		 {10.1007/3-540-49116-3_38},
}

\end{document}  
